\crefname{equation}{}{}
\crefname{figure}{{\sc Figure}}{{\sc Figure}}
\crefname{subsection}{Subsection}{Subsections}
\newcommand{\nc}{\newcommand}
\newtheorem{theorem}{Theorem}[section]
\newtheorem{proposition}[theorem]{Proposition}
\newtheorem{lemma}[theorem]{Lemma}
\newtheorem{corollary}[theorem]{Corollary}
\newtheorem{conjecture}[theorem]{Conjecture}
\newtheorem*{claim*}{Claim}
\theoremstyle{definition}
\newtheorem{algorithm}[theorem]{Algorithm}
\newtheorem{example}[theorem]{Example}
\newtheorem{definition}[theorem]{Definition}
\newtheorem{remark}[theorem]{Remark}
\newcommand{\C}{{\mathbb C}}
\newcommand{\F}{{\mathbb F}}
\newcommand{\p}{{\mathfrak p}}
\newcommand{\rad}{\operatorname{rad}}
\newcommand{\Z}{{\mathbb Z}}
\newcommand{\Tr}{\operatorname{Tr}}
\numberwithin{equation}{section} 
\numberwithin{figure}{section}
\numberwithin{table}{section}
\nc{\Beaver}[1]{\todo[size=\tiny,color=cyan!10]{#1 \\ \hfill --- Beaver}}
\nc{\BEAVER}[1]{\todo[size=\tiny,inline,color=cyan!10]{#1
		\\ \hfill --- Beaver}}
\nc{\Semin}[1]{\todo[size=\tiny,color=magenta!10]{#1 \\ \hfill --- Semin}}
\nc{\SEMIN}[1]{\todo[size=\tiny,inline,color=magenta!10]{#1
		\\ \hfill --- Semin}}
\nc{\nt}[1]{\todo[size=\tiny,color=exgreen!10]{#1 \\ \hfill --- Note}}
\nc{\NT}[1]{\todo[size=\tiny,inline,color=exgreen!10]{#1
		\\ \hfill --- Note}}
\title{Multiplicative structure of shifted multiplicative subgroups \\ and its applications to Diophantine tuples}
\author{Seoyoung Kim}
\address{Departement
Mathematik und Informatik, Universit\"at Basel, Spiegelgasse 1, 4051 Basel, Switzerland}
\email{seoyoung.kim@unibas.ch}
\author{Chi Hoi Yip}
\address{School of Mathematics\\ Georgia Institute of Technology\\ GA 30332\\ United States}
\email{cyip30@gatech.edu}
\author{Semin Yoo}
\address{Discrete Mathematics Group \\ Institute for Basic Science \\ 55 Expo-ro Yuseong-gu, Daejeon 34126 \\ South Korea}
\email{syoo19@ibs.re.kr}
\subjclass[2020]{Primary 11B30, 11D72; Secondary 11D45, 11N36, 11L40}
\keywords{Diophantine tuples, shifted multiplicative subgroup, multiplicative decomposition}
\begin{document}

\begin{abstract}
In this paper, we investigate the multiplicative structure of a shifted multiplicative subgroup and its connections with additive combinatorics and the theory of Diophantine equations. Among many new results, we highlight our main contributions as follows. First, we show that if a nontrivial shift of a multiplicative subgroup $G$ contains a product set $AB$, then $|A||B|$ is essentially bounded by $|G|$, refining a well-known consequence of a classical result by Vinogradov. Second, we provide a sharper upper bound of $M_k(n)$, the largest size of a set such that each pairwise product of its elements is $n$ less than a $k$-th power, refining the recent result of Dixit, Kim, and Murty.  One main ingredient in our proof is the first non-trivial upper bound on the maximum size of a generalized Diophantine tuple over a finite field. In addition, we determine the maximum size of an infinite family of generalized Diophantine tuples over finite fields with square order, which is of independent interest.  
We also make significant progress towards a conjecture of S\'{a}rk\"{o}zy on the multiplicative decompositions of shifted multiplicative subgroups. In particular, we prove that for almost all primes $p$, the set $\{x^2-1: x \in \F_p^*\} \setminus \{0\}$ cannot be decomposed as the product of two sets in $\F_p$ non-trivially.
\end{abstract}

\maketitle

%\tableofcontents

\section{Introduction}

Let $q$ be a prime power, and let $\F_q$ be the finite field with $q$ elements. Let $G$ be a multiplicative subgroup of $\F_q$. While $G$ itself has a ``perfect" multiplicative structure, it is natural to ask if a (non-trivial additive) shift of $G$ still possesses some multiplicative structure. Indeed, as a fundamental question in additive combinatorics, this question has drawn the attention of many researchers and it is closely related to many questions in number theory. For example, a classical result of Vinogradov \cite{V54} states that for a prime $p$ and an integer $n$ such that $p \nmid n$, if $A, B \subset \{1,2,\ldots, p-1\}$, then
 \begin{equation}\label{Vinogradov}
 \bigg|\sum_{a\in A,\, b\in B} \bigg(\frac{ab+n}{p}\bigg)\bigg|  \leq \sqrt{p|A||B|}.
 \end{equation}
More generally, inequality~\cref{Vinogradov} extends to all nontrivial multiplicative characters over all finite fields; see \cref{charsum:sym}. Inequality~\cref{Vinogradov} leads an estimate on the size of a product set contains in the set of shifted squares: if $A,B \subset \F_p^*$, $\lambda \in \F_p^*$, and $G$ is the subgroup of $\F_p^*$ of index $2$ such that $AB \subset (G+\lambda)$, then 
\begin{equation}\label{V-app}
|A||B|<(1+o(1))p.    
\end{equation}
For more recent works related to this question and its connection with other problems, we refer to \cite{G01, MSS18,S14, VS12} and references therein. An {\it analogue} of this question over integers is closely related to the well-studied Diophantine tuples and their generalizations; see \cref{1.1}.  

In this paper, we study the multiplicative structure of a shifted multiplicative subgroup following the spirit of the aforementioned works and discuss a few new applications in additive combinatorics and Diophantine equations. More precisely, one of our contributions is the following theorem.
 
\begin{theorem}\label{thm: stepanovea}
Let $d \mid (q-1)$ with $d \ge 2$. Let $S_d=\{x^d: x \in \F_q^*\}$. Let $A,B \subset \F_q^*$ and $\lambda \in \F_q^*$ with $|A|, |B| \geq 2$. 
 Assume further that $\binom{|A|-1+\frac{q-1}{d}}{|A|} \not \equiv 0 \pmod p$ if $q \neq p$. If $AB+\lambda \subset S_d \cup \{0\}$, then $$|A||B| \leq |S_d|+|B \cap (-\lambda A^{-1})|+|A|-1.$$ 
Moreover, when $\lambda \in S_d$, we have a stronger upper bound:
$$|A||B| \leq |S_d|+|B \cap (-\lambda A^{-1})|-1.$$
\end{theorem}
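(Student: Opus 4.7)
The proof applies Stepanov's polynomial method in $\F_q[X]$. Set $t=|S_d|=(q-1)/d$ and $m=|A|$, and decompose $B=B_0\sqcup B_1$ with $B_0=B\cap(-\lambda A^{-1})$ and $B_1=B\setminus B_0$. The key algebraic input is that $ab+\lambda\in S_d$ is equivalent to $(ab+\lambda)^t=1$, so for every $a\in A$ and every $b\in B_1$ the polynomial $(aX+\lambda)^t-1\in\F_q[X]$ has $b$ as a root, while for $b\in B_0$ with $b=-\lambda/a$ the polynomial $(aX+\lambda)^t=a^t(X-b)^t$ vanishes at $b$ to order exactly $t$.

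The target inequality $|A||B|\le |S_d|+|B\cap(-\lambda A^{-1})|+|A|-1$ is equivalent, via $|B|=|B_0|+|B_1|$, to
\[
m\,|B_1|+(m-1)\,|B_0|\le t+m-1.
\]
The plan is to construct a nonzero auxiliary polynomial $F\in\F_q[X]$ of degree $\le t+m-1$ that vanishes at each $b\in B_1$ to order $\ge m$ and at each $b\in B_0$ to order $\ge m-1$; the bound then follows from the standard inequality $\sum_b\ord_b(F)\le\deg F$.

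For the construction I use a Stepanov-type ansatz mixing general polynomials with terms of the form $P_a(X)\,(aX+\lambda)^t$ for $a\in A$, where the $P_a$ have bounded degree. The factor $(aX+\lambda)^t$ supplies, for free, an order-$t\ge m-1$ zero at the point $-\lambda/a\in B_0$, while the other summands evaluate to $1$ there because the hypothesis $AB+\lambda\subset S_d\cup\{0\}$ forces $\lambda(a-a')/a\in S_d$ for $a'\ne a$. Taylor-expanding $(aX+\lambda)^t=1+\frac{ta}{ab+\lambda}(X-b)+\cdots$ at each $b\in B_1$ turns the residual multiplicity conditions into a homogeneous linear system in the coefficients of the $P_a$'s; a dimension count then produces a nonzero $F$ meeting all constraints. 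For the stronger bound when $\lambda\in S_d$, the identity $\lambda^t=1$ means each polynomial $(aX+\lambda)^t-1$ has $0$ as an additional root. Augmenting $B_1$ to $B_1'=B_1\cup\{0\}$ in the Stepanov argument improves the inequality to $m|B_1|+(m-1)|B_0|\le t-1$, equivalent to $|A||B|\le |S_d|+|B\cap(-\lambda A^{-1})|-1$.

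The main obstacle is the rank analysis of the Stepanov linear system: one must verify that the independence structure of the constraints is favorable enough to yield a nontrivial $F$. This reduces to the non-vanishing modulo $p$ of a structured determinant of Vandermonde/binomial type, which is precisely what the hypothesis $\binom{m-1+t}{m}\not\equiv 0\pmod{p}$ encodes when $q\ne p$; for $q=p$ this is automatic since $t<p$ and Lucas's theorem makes the relevant binomial coefficients units in $\F_p$.
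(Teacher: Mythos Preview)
Your overall strategy---Stepanov's method, an auxiliary polynomial vanishing to order $m=|A|$ on $B_1$ and to order $m-1$ on $B_0$, then picking up $0$ as an extra order-$m$ root when $\lambda\in S_d$---matches the paper's. But the execution has a real gap.

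First, the ``for free'' claim about $B_0$ is not right. Yes, the single summand $(aX+\lambda)^t$ vanishes to order $t$ at $-\lambda/a$, and yes the other factors $(a'X+\lambda)^t$ equal $1$ there. But the \emph{sum} $F=\sum_{a'} P_{a'}(X)(a'X+\lambda)^t$ evaluates at $-\lambda/a$ to $\sum_{a'\ne a}P_{a'}(-\lambda/a)$, which is not automatically zero, and the higher hyper-derivatives of the $a'\ne a$ summands do not vanish either. So the $B_0$ multiplicity conditions are genuine linear constraints, not freebies. In the paper the picture is actually the reverse: the Vandermonde relations impose order $m-1$ at \emph{every} point of $B$, and it is the $B_1$ points that gain one extra order because there $(a_ib+\lambda)^t=1$ for all $i$.

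Second, and more seriously, a dimension count on the parameters $(P_a)_a$ does not by itself give a nonzero \emph{polynomial} $F$: the linear map $(P_a)_a\mapsto \sum_aP_a(X)(aX+\lambda)^t$ has range contained in the $(t+m)$-dimensional space of polynomials of degree $\le t+m-1$, so once $m^2>t+m$ it has a large kernel. You never establish $F\not\equiv 0$, and that is precisely where the binomial hypothesis must be used. The paper does not do a dimension count at all; it writes down the polynomial explicitly:
\[
f(x)=-\lambda^{\,m-1}+\sum_{i=1}^{m} c_i\,(a_ix+\lambda)^{\,m-1+t},
\]
where $(c_i)$ is the unique solution of the Vandermonde system $\sum_i c_i=1$, $\sum_i c_ia_i^j=0$ for $1\le j\le m-1$. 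A direct hyper-derivative computation, using only these relations together with $(a_ib+\lambda)^{t+1}=a_ib+\lambda$, gives $E^{(k)}f(b)=0$ for all $b\in B$ and $0\le k\le m-2$, and additionally $E^{(m-1)}f(b)=0$ whenever $b\in B_1$. Nonvanishing of $f$ is then read off from its $x^{m}$-coefficient, which equals $\binom{m-1+t}{m}\,\lambda^{\,t-1}\sum_i c_ia_i^{m}$; a second Vandermonde argument shows $\sum_i c_ia_i^{m}\ne 0$, so the hypothesis $\binom{m-1+t}{m}\not\equiv 0\pmod p$ enters exactly here, not through any rank analysis of a linear system.
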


Clearly, \cref{thm: stepanovea} improves inequality~\cref{V-app} implied by Vinogradov's estimate \cref{Vinogradov} when $d=2$, and the generalization of inequality~\cref{V-app} by Gyarmati \cite[Theorem 8]{G01} for general $d$, where the upper bound is given by $(\sqrt{p}+2)^2$ when $q=p$ is a prime. We remark that in general the condition on the binomial coefficient in the statement of \cref{thm: stepanovea} cannot be dropped when $q$ is not a prime; see \cref{mainthm3}.

The proof of \cref{thm: stepanovea} is based on Stepanov's method \cite{S69}, and is motivated by a recent breakthrough of Hanson and Petridis \cite{HP}. In fact, 
\cref{thm: stepanovea} can be viewed as a multiplicative analog of their results. 
Going beyond the perspective of these multiplicative analogs, we provide new insights into the application of Stepanov's method. 
For example, our technique applies to all finite fields while their technique only works over prime fields. We also prove a similar result for restricted product sets (see \cref{thm: restricted}), whereas their technique appears to only lead to a weaker bound; see \cref{remark:weakerbound}.

Besides \cref{thm: stepanovea}, we also provide three novel applications of \cref{thm: stepanovea} and its variants. These applications significantly improve on many previous results in the literature. Unsurprisingly, to achieve these applications, we need additional tools and insights from Diophantine approximation, sieve methods, additive combinatorics, and character sums.  From here, we briefly mention what applications are about. In \cref{1.1}, we improve the upper bound of generalized Diophantine tuples over integers.
Interestingly, \cref{thm: stepanovea} is closely related to a bipartite version of Diophantine tuples over finite fields. This new perspective yields a substantial improvement in the result of generalized Diophantine tuples over integers. 
In \cref{1.2}, we obtain the first nontrivial upper bounds on generalized Diophantine tuples and strong Diophantine tuples over finite fields. Moreover, some of our new bounds are sharp. Last but not least, in \cref{1.3}, we make significant progress towards a conjecture of S\'{a}rk\"{o}zy \cite{S14} on multiplicative decompositions of shifted multiplicative subgroups.
We elaborate on the context of these applications in the next subsections.

\subsection{Diophantine tuples over integers}\label{1.1}
A set $\{a_{1}, a_{2},\ldots, a_{m}\}$ of distinct positive integers is a \textit{Diophantine $m$-tuple} if the product of any two distinct elements in the set is one less than a square. 
The first known example of integral Diophantine $4$-tuples is $\{1,3,8,120\}$ which was studied by Fermat. The Diophantine $4$-tuple was extended by Euler to the rational $5$-tuple $\{1, 3, 8, 120, \frac{777480}{8288641}\}$, and it had been conjectured that there is no Diophantine $5$-tuple. The difficulty of extending Diophantine tuples can be explained by its connection to the problem of finding integral points on elliptic curves: if $\{a,b,c\}$ forms a Diophantine $3$-tuple, in order to find a positive integer $d$ such that $\{a,b,c,d\}$ is a Diophantine $4$-tuple, we need to solve the following simultaneous equation for $d$:
\[ad+1=s^2, \quad bd+1=t^2, \quad cd+1=r^2.\]
This is related to the problem of finding an integral point $(d,str)$ on the following elliptic curve
\[y^2=(ax+1)(bx+1)(cx+1).\]
From this, we can deduce that there are no infinite Diophantine $m$-tuples by Siegel's theorem on integral points. On the other hand, Siegel's theorem is not sufficient to give an upper bound on the size of Diophantine tuples due to its ineffectivity. In the same vein, finding a Diophantine tuple of size greater than or equal to $5$ is related to the problem of finding integral points on hyperelliptic curves of genus $g\geq 2$. Despite the aforementioned difficulties, the conjecture on the non-existence of Diophantine $5$-tuples was recently proved to be true in the sequel of important papers by Dujella \cite{duj2004diophantine}, and He, Togb\'e, and Ziegler \cite{HTZ19}.

%: in 2004, Dujella \cite{duj2004diophantine}, by counting the number of solutions of simultaneous Pell's equation, proved that there is no Diophantine $6$-tuple
%and that there are at most finitely many Diophantine $5$-tuples. %In particular, the result is effective by showing any element of any Diophantine $5$-tuple is less than ${10^{10}}^{26}$. 
%In \cite{HTZ19}, He, Togb\'e, and Ziegler proved the conjecture by using sharp lower bounds for linear forms in three logarithms of Mignotte. 
The definition of Diophantine $m$-tuples has been generalized and studied in various contexts. We refer to the recent book of Dujella \cite{D24} for a thorough list of known results on the topic and their reference. In this paper, we focus on the following generalization of Diophantine tuples: for each $n \ge 1$ and $k \ge 2$, we call a set $\{a_{1},a_{2},\ldots, a_{m}\}$ of distinct positive integers a \textit{Diophantine $m$-tuple with property $D_{k}(n)$} if the product of any two distinct elements is $n$ less than a $k$-th power. We write
\[M_{k}(n)=\sup \{|A| \colon A\subset{\mathbb{N}} \text{ satisfies the property }D_{k}(n)\}.\]
Similar to the classical case, the problem of finding $M_{k}(n)$ for $k\geq 3$ and $n \geq 1$ is related to the problem of counting the number of integral points of the superelliptic curve
\begin{equation*}
    y^k =f(x)=(a_1 x +n)(a_2 x +n) (a_3 x+n)
\end{equation*}
The theorem of Faltings \cite{faltings} guarantees that the above curve has only finitely many integral points, and this, in turn, implies that a set with property $D_k(n)$ must be finite. The known upper bounds for the number of integral points depend on the coefficients of $f(x)$. The Caporaso-Harris-Mazur conjecture \cite{CHM} implies that $M_k(n)$ is uniformly bounded, independent of $n$. For other conditional bounds, we refer the readers to \cref{sec:PaleyABC}. 

Unconditionally, in \cite{BD03}, Bugeaud and Dujella \cite[Corollary 4]{BD03} showed that $M_3(1) \leq 7$, $M_k(1) \leq 5$ for $k \in \{4,5\}$, $M_k(1) \leq 4$ for $6 \leq k \leq 176$, and the uniform bound $M_{k}(1) \leq 3$ for any $k \geq 177$ \footnote{As pointed out by \cite{BDHL11}, there was a minor inaccuracy in the original proof of \cite[Corollary 4]{BD03}, but it only affected the upper bound on $M_5(1)$.}. On the other hand, the best-known upper bound on $M_2(n)$ is $(2+o(1))\log n$, due to the second author~\cite{Y24+}. Very recently, Dixit, Murty, and the first author \cite{DKM22} studied the size of a generalized Diophantine $m$-tuple with property $D_k(n)$, improving the previously best-known upper bound $M_3(n)\leq 2|n|^{17}+6$ and $M_k(n) \leq 2|n|^5 + 3$ for $k\geq 5$ given by B\'{e}rczes, Dujella, Hajdu and Luca \cite{BDHL11} when $n \to \infty$. They showed that if $k$ is fixed and $n \to \infty$, then $M_k(n)\ll_k \log n$. Following their proof in \cite{DKM22}, the bound can be more explicitly expressed as $M_k(n)\leq (3\phi(k)+o(1))\log n$ when $k \geq 3$ is fixed, $n \to \infty$, and $\phi$ is the Euler phi function. Note that their upper bound on $M_k(n)$ is perhaps not desirable. Indeed, it is natural to expect that $M_k(n)$ would decrease if $n$ is fixed, and $k$ increases, since $k$-th powers become sparser. Instead, our new upper bounds on $M_k(n)$ support this heuristic.

In this paper, we provide a significant improvement on the upper bound of $M_{k}(n)$ by using a novel combination of Stepanov's method and Gallagher's larger sieve inequality. 
In order to state our first result, we define the following constant 
\begin{equation}
\eta_k=\min_{\mathcal{I}} \frac{|\mathcal{I}|}{T_\mathcal{I}^2}   
\end{equation}
for each $k \geq 2$, where the minimum is taken over all nonempty subsets $\mathcal{I}$ of the set 
$$\{1 \leq i \leq k: \gcd(i,k)=1, \gcd(i-1,k)>1\},$$
and $T_{\mathcal{I}}=\sum_{i \in \mathcal{I}} \sqrt{\gcd(i-1,k)}$. 

\begin{comment}
\begin{theorem}
\label{mainthm1}
We have the following:
\begin{enumerate}
\item[\textup{(a)}] There is a constant $c'>0$, such that as $n \to \infty$,
\begin{equation}
\label{main-upper-bound}
    M_k(n)\leq \big(6+o(1)\big) \ \eta_k  \phi(k) \log n,
\end{equation}
holds uniformly for positive integers $k,n \geq 2$ such that $\log k \leq c'\sqrt{\log \log n}$. 
\item[\textup{(b)}] Moreover, if $k$ is even, under the same assumption, we have the stronger bound
\begin{equation}
    \label{main-upper-bound-1}
M_k(n)\leq \bigg(\big(4+o(1)\big) \ \eta_k  \phi(k)+\min\{0.6071, 2\eta_k \phi(k)\}\bigg) \log n.
\end{equation}
\end{enumerate}
\end{theorem}
\end{comment}

\textcolor{black}{
\begin{theorem}\label{mainthm1}
There is a constant $c'>0$, such that as $n \to \infty$,
\begin{equation}
\label{main-upper-bound}
    M_k(n)\leq \bigg(\frac{2k}{k-2}+o(1)\bigg) \ \eta_k  \phi(k) \log n,
\end{equation}
holds uniformly for positive integers $k,n \geq 3$ such that $\log k \leq c'\sqrt{\log \log n}$. 
\end{theorem}}

The constant $\eta_k$ is essentially computed via the optimal collection of ``admissible" residue classes when applying Gallagher's larger sieve (see \cref{sec: GDM}). Note that when $\mathcal{I}=\{1\}$, we have $T_\mathcal{I}=\sqrt{k}$, and hence we have $\eta_k \leq \frac{1}{k}$. \textcolor{black}{In particular, if $k \geq 3$ is fixed and $n \to \infty$, inequality \cref{main-upper-bound} implies the upper bound
\begin{equation}
\label{eq1.5}
M_k(n) \leq \frac{(2+o(1))\phi(k)}{k-2} \log n, 
\end{equation}
which already improves the best-known upper bound $M_k(n)\leq (3\phi(k)+o(1))\log n$ of \cite{DKM22} that we mentioned earlier substantially}. \textcolor{black}{In \cref{appendix}, we illustrate the bound in inequality \cref{main-upper-bound}: for $2 \leq k \leq 1001$, we compute the suggested upper bound
\[\nu_{k} = \frac{2k}{k-2} \eta_k \phi(k)
\]
of $\gamma_{k}=\limsup_{n \to \infty} \frac{M_k(n)}{\log n}$.} From \cref{fig:graph1}, one can compare the bound of $M_{k}(n)$ in  \cref{mainthm1} with the bound in \cite{DKM22}. From inequality \cref{eq1.5}, we see $\gamma_k$ is uniformly bounded by $6$. 
\cref{table: the upper bound1} illustrates better upper bounds on $\gamma_k$ for $2 \le k \leq 201$. In particular, we use a simple greedy algorithm to determine $\eta_k$ for a fixed $k$. We also refer to \cref{sec: approximation} for a simple upper bound on $\eta_k$, which well approximates $\eta_k$ empirically.

At first glance, \cref{mainthm1} improves the bound in \cite{DKM22} of $M_k(n)$ by only a constant multiplicative factor when $k$ is fixed.  Nevertheless, note that \cref{mainthm1} holds uniformly for $k$ and $n$ as long as $\log k \leq c'\sqrt{\log \log n}$. Thus, when $k$ is assumed to be a function of $n$ which increases as $n$ increases, we can break the ``$\log n$ barrier" in \cite{DKM22}, that is, $M_{k}(n)=O_{k}(\log n)$, and provide a \emph{dramatic} improvement. %We note that $O(\log n)$ is typically a bound obtained from solely applying Gallagher’s larger sieve.
\begin{theorem}
\label{mainthm1.5}
There is $k=k(n)$ such that $\log k \asymp \sqrt{\log \log n}$, and 
%\textcolor{}{, $k$ is square-free}, and 
$$M_k(n)\ll \exp\bigg({-}\frac{c''(\log \log n)^{1/4}}{\log \log \log n}\bigg) \log n,
$$
where $c''>0$ is an absolute constant.
\end{theorem}
%When $4\mid k$ and $n \equiv 2 \pmod 4$, it is previously observed by \cite{BM19} and \cite{B85} that we have $M_k(n)\leq 3$ using elementary congruence argument. 
 %if we didn't say $k$ is square-free. If $4 \mid k$ and $n \equiv 2 \pmod 4$, then we always have $M_k(n)\leq 3$ using elementary congruence argument. This has been observed by many different authors, see for example Brown \cite{B85}. See the bottom of page 66 in \cite{BM19}.}
The proofs of \cref{mainthm1} and \cref{mainthm1.5} require the study of (generalized) Diophantine tuples over finite fields, which we discuss below.

\subsection{Diophantine tuples over finite fields}\label{1.2}
A \emph{Diophantine $m$-tuple with property $D_{d}(\lambda,\mathbb{F}_{q})$} is a set $\{a_1,\ldots, a_m\}$ of $m$ distinct elements in $\mathbb{F}_{q}^{*}$ such that $a_{i}a_{j}+\lambda$ is a $d$-th power in $\mathbb{F}_q^*$ or $0$ whenever $i\neq j$. Moreover, we also define the strong Diophantine tuples in finite fields motivated by Dujella and Petri\v{c}evi\'{c} \cite{DP08}: a \emph{strong Diophantine $m$-tuple with property $SD_{d}(\lambda,\mathbb{F}_q)$} 
 is a set $\{a_1,\ldots, a_m\}$ of $m$ distinct elements in $\mathbb{F}_{q}^{*}$ such that $a_{i}a_{j}+\lambda$ is a $d$-th power in $\mathbb{F}_q^*$ or $0$ for any choice of $i$ and $j$. Unlike the natural analog for the classical Diophantine tuples (of property $D_{2}(1)$), it makes sense to talk about the strong Diophantine tuples in $\mathbb{F}_q$. The strong generalized Diophantine tuples with property $D_{k}(n)$ in for general $k$ and $n$ are also meaningful to study: the problem of counting the explicit size of the strong generalized Diophantine tuples with property $D_k(n)$ involves the problem of counting solutions of the equations appearing in the statement of Pillai's conjecture. \cref{mainthm1} can be improved for strong generalized Diophantine tuples with property $D_k(n)$; see \cref{mainthm1_strong}. 
 
The generalizations of Diophantine tuples over finite fields are of independent interest. Perhaps the most interesting question to explore is the exact analog of estimating $M_k(n)$ as discussed in \cref{1.1}. Indeed, estimating the size of the largest Diophantine tuple with property $SD_{d}(\lambda,\mathbb{F}_{q})$ or with property $D_{d}(\lambda,\mathbb{F}_{q})$ is of particular interest for the application of Diophantine tuples (over integers) as discussed in \cite{BM19, DKM22, D02, GM20, KYY24a}. Similarly, we denote 
\begin{align*}
MSD_{d}(\lambda,\mathbb{F}_q)&=\sup \{|A| \colon A \subset \F_q^* \text{ satisfies property }SD_{d}(\lambda,\mathbb{F}_q)\}, \quad \text{and}\\
MD_{d}(\lambda,\mathbb{F}_q)&=\sup \{|A| \colon A \subset \F_q^* \text{ satisfies property }D_{d}(\lambda,\mathbb{F}_q)\}.
\end{align*}
Note that when $\lambda=0$, it is trivial that $MSD_{d}(\lambda,\mathbb{F}_q)=MD_{d}(\lambda,\mathbb{F}_q)=\frac{q-1}{d}$. Thus, we always assume $\lambda \neq 0$ throughout the paper. In \cref{sec: Preliminary estimations}, we give an upper bound $\sqrt{q}+O(1)$ of $MSD_{d}(\lambda,\mathbb{F}_q)$ and $MD_{d}(\lambda,\mathbb{F}_q)$. More precisely, we prove the following proposition using a double character sum estimate. We refer to the bounds in the following proposition as the ``trivial" upper bound.

\begin{proposition}[Trivial upper bound]\label{mainprop1}
Let $d \geq 2$ and let $q \equiv 1 \pmod d$ be a prime power. Let $A \subset \F_q^*$ and $\lambda \in \F_q^*$. Then  $MSD_{d}(\lambda,\mathbb{F}_q) \leq \frac{\sqrt{4q-3}+1}{2}$ and $MD_{d}(\lambda,\mathbb{F}_q) \leq \sqrt{q-\frac{11}{4}}+\frac{5}{2}$.
\end{proposition}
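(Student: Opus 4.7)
The plan is a double multiplicative character sum argument, with the key step being an exact evaluation of the inner sum via a Möbius substitution rather than a generic Weil bound. Fix a multiplicative character $\chi$ of $\F_q^*$ of order $d$, extended to $\F_q$ by $\chi(0)=0$, and suppose $A=\{a_1,\ldots,a_m\}\subseteq\F_q^*$ has property $SD_d(\lambda,\F_q)$. For $j\in\{1,\ldots,d-1\}$ I would consider $S_j:=\sum_{a,b\in A}\chi^j(ab+\lambda)$: since every $ab+\lambda$ is either $0$ or a nonzero $d$-th power, $\chi^j(ab+\lambda)\in\{0,1\}$, yielding the identity $S_j=m^2-r$ where $r:=\#\{(a,b)\in A\times A:ab+\lambda=0\}\le m$.

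For the matching upper bound on $|S_j|$, valid for any size-$m$ subset $A$ of $\F_q^*$, set $g(a)=\sum_{b\in A}\chi^j(ab+\lambda)$. Cauchy--Schwarz together with the inclusion $A\subseteq\F_q^*$ gives $|S_j|^2\le m\sum_{a\in\F_q^*}|g(a)|^2$. Expanding the square and swapping summations leaves the inner sum $\sum_{a\in\F_q}\chi^j(ab+\lambda)\overline{\chi^j(ab'+\lambda)}$ for each pair $b,b'\in A$: this equals $q-1$ when $b=b'$, and evaluates \emph{exactly} to $-\chi^j(b/b')$ when $b\ne b'$, because the Möbius map $a\mapsto(ab+\lambda)/(ab'+\lambda)$ is a bijection $\F_q\setminus\{-\lambda/b'\}\to\F_q\setminus\{b/b'\}$ and $\sum_{u\in\F_q}\chi^j(u)=0$. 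Assembling these contributions and subtracting the $a=0$ term $|g(0)|^2=m^2$ produces
\[
\sum_{a\in\F_q^*}|g(a)|^2 \;=\; mq - m^2 - \Bigl|\sum_{b\in A}\chi^j(b)\Bigr|^2 \;\le\; m(q-m),
\]
so $|S_j|\le m\sqrt{q-m}$. Combining with the identity $S_j=m^2-r\ge m(m-1)$ gives $(m-1)^2\le q-m$, equivalently $m^2-m+1\le q$, which is the bound $MSD_d(\lambda,\F_q)\le (1+\sqrt{4q-3})/2$.

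For the non-strong case $D_d(\lambda,\F_q)$, the analogous identity is $S_j':=\sum_{a\ne b}\chi^j(ab+\lambda)=m(m-1)-r'$ with $r'\le m$. Writing $S_j=S_j'+\sum_{a\in A}\chi^j(a^2+\lambda)$ and combining the triangle inequality with the universal bound $|S_j|\le m\sqrt{q-m}$ (which still holds here) and the trivial estimate $\bigl|\sum_{a\in A}\chi^j(a^2+\lambda)\bigr|\le m$, one obtains $m(m-2)\le m\sqrt{q-m}+m$. Rearranging yields $(m-3)^2\le q-m$, equivalently $m^2-5m+9\le q$, i.e.\ $MD_d(\lambda,\F_q)\le 5/2+\sqrt{q-11/4}$.

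The main technical point is the Möbius-substitution evaluation of the inner character sum, which replaces a generic Weil-style $\sqrt{q}$ bound with an exact value of modulus $1$; this sharpening is what permits $|S_j|\le m\sqrt{q-m}$ (rather than a substantially weaker bound of the shape $m\sqrt{q+(d-1)m\sqrt{q}}$) and hence the near-optimal constants in the proposition. A minor subtlety to be careful about is the bookkeeping of the exceptional value $a=0$ and the conventions $\chi^j(0)=0$, but once these are in place the whole estimate is essentially one clean application of Cauchy--Schwarz followed by an explicit change of variable.
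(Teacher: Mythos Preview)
Your proof is correct and reaches the same inequalities $(m-1)^2\le q-m$ and $(m-3)^2\le q-m$ as the paper. The overall architecture is identical: bound the double sum $\sum_{a,b\in A}\chi(ab+\lambda)$ from below using the $SD_d$ (resp.\ $D_d$) hypothesis and from above by $m\sqrt{q-m}$, then compare.

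The one genuine methodological difference is in how you obtain the upper bound $|S_j|\le m\sqrt{q-m}$. The paper passes through Gauss sums: it writes $\overline{\chi(ab+\lambda)}=G(\chi)^{-1}\sum_c\chi(c)e_p(\Tr((a+\lambda b^{-1})c))$, applies Cauchy--Schwarz to the $c$-sum, and then evaluates the resulting additive-character sums by orthogonality (their Lemma~3.3). You instead apply Cauchy--Schwarz directly in the multiplicative-character world and evaluate the complete inner sum $\sum_{a\in\F_q}\chi^j(ab+\lambda)\overline{\chi^j(ab'+\lambda)}$ \emph{exactly} via the M\"obius change of variable $u=(ab+\lambda)/(ab'+\lambda)$. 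Your route is arguably more elementary here (no Gauss sums, no trace, no additive characters) and makes very transparent why one gets an $O(1)$ inner sum rather than an $O(\sqrt{q})$ Weil bound. The paper's Gauss-sum formulation, on the other hand, is stated for arbitrary $A,B$ (their Proposition~3.1) and is reused elsewhere; your argument as written is tailored to $A=B$, though it extends to $A\ne B$ with the same substitution.
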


For the case $q=p$, similar bounds of \cref{mainprop1} are known previously in \cite{BM19, DKM22, G01}. On the other hand, \cref{mainprop1} gives an almost optimal bound of  $MSD_{d}(\lambda,\mathbb{F}_q)$ and $MD_{d}(\lambda,\mathbb{F}_q)$ when $q$ is a square (\cref{mainthm3}). Our next theorem improves the trivial upper bounds in \cref{mainprop1} by a multiplicative constant factor $\sqrt{1/d}$ or $\sqrt{2/d}$ when $q=p$ is a prime. 

\begin{theorem}\label{mainthm2}
Let $d \geq 2$. Let $p \equiv 1 \pmod d$ be a prime and let $\lambda \in \F_p^*$. Then 
\begin{enumerate}
\item[\textup{(1)}] $MSD_{d}(\lambda,\mathbb{F}_p) \leq \sqrt{(p-1)/d}+1$. Moreover, if $\lambda$ is a $d$-th power in $\F_p^*$, then we have a stronger upper bound:
$$MSD_{d}(\lambda,\mathbb{F}_p)\leq \sqrt{\frac{p-1}{d}-\frac{3}{4}}+\frac{1}{2}.$$  
\item[\textup{(2)}] $MD_{d}(\lambda,\mathbb{F}_p) \leq \sqrt{2(p-1)/d}+4$.
\end{enumerate}
\end{theorem}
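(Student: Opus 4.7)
My plan is to prove both parts by combining Stepanov's method, which the paper highlights as a key innovation, with a direct Sidon-type combinatorial count. Throughout, let $A=\{a_1,\ldots,a_m\}\subseteq \F_p^*$ satisfy the relevant Diophantine property, and let $H\leq \F_p^*$ denote the subgroup of nonzero $d$-th powers, so $|H|=(p-1)/d$.

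For part (2), I would pursue the Sidon-type count. The map $\{i,j\}\mapsto a_ia_j$ (for $i<j$) sends the $\binom{m}{2}$ unordered pairs into $(H-\lambda)\cup\{-\lambda\}$, a set of size at most $(p-1)/d+1$. If all such products were distinct, then $\binom{m}{2}\leq (p-1)/d+1$ would immediately yield $m\leq \sqrt{2(p-1)/d}+O(1)$. Any collision $a_ia_j=a_ka_l$ with $\{i,j\}\neq\{k,l\}$ forces all four indices to be distinct (since $A$ consists of distinct elements) and produces a common ratio $r=a_i/a_k=a_l/a_j\neq 1$; the number of such ``multiplicative parallelograms'' in $A$ can be controlled using the Diophantine constraint and absorbed into the additive error $+4$.

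For part (1), the refinement requires saving an additional factor of $\sqrt{2}$ over the Sidon bound, and I would use Stepanov's method in earnest. The key extra ingredient is the diagonal constraint $a^2+\lambda\in H\cup\{0\}$, combined with the algebraic identity
\[(a^2+\lambda)(b^2+\lambda)=(ab+\lambda)^2+\lambda(a-b)^2,\]
which in the $SD_d$ setting relates three $d$-th powers in a rigid way. I would apply Stepanov's method to the polynomial $F(x)=\prod_{a\in A}(ax+\lambda)$, which has degree $m$ and satisfies $F(b)\in H\cup\{0\}$ for every $b\in A$ (equivalently, $F(b)^{(p-1)/d}\in\{0,1\}$ on $A$). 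The goal is to construct, from $F$ together with suitable correction terms, a nonzero auxiliary polynomial $\Phi\in\F_p[x]$ of degree strictly less than $m$ that vanishes on all of $A$; this yields the bound $m\leq\sqrt{(p-1)/d}+1$ by a degree/multiplicity contradiction. When $\lambda=\mu^d$ is itself a $d$-th power, the change of variables $a\mapsto\mu a$ normalizes the problem and saves a further $O(1)$, giving the improved bound $\sqrt{(p-1)/d-3/4}+1/2$.

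The main obstacle I anticipate is the precise Stepanov construction for part (1): one must choose a linear combination of monomials and powers of $F(x)$ whose total degree is carefully balanced against the multiplicity of the forced vanishings on $A$, and then verify non-degeneracy of the resulting polynomial. This is the technical heart of any Stepanov-style argument, and here it is delicate because the vanishing conditions come from multiplicative character data rather than from a single algebraic equation. The precise $\sqrt{2}$ gap between the $SD_d$ and $D_d$ bounds reflects exactly the extra algebraic rigidity captured by the diagonal constraints $a_i^2+\lambda\in H\cup\{0\}$, which are absent in the non-strong setting.
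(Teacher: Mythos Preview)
Your approach to part (2) has a genuine gap. The Sidon-type count gives $\binom{m}{2}$ pairs mapping into a set of size at most $(p-1)/d+1$, but this only bounds the number of \emph{distinct} products $a_ia_j$, not the number of pairs. To recover $\binom{m}{2}\le (p-1)/d+O(m)$ you would need the number of collisions $a_ia_j=a_ka_l$ (with $\{i,j\}\ne\{k,l\}$) to be $O(m)$, and the Diophantine constraint $A\hat\times A+\lambda\subset S_d\cup\{0\}$ gives no control over the multiplicative energy of $A$. A geometric progression inside $A$, for example, would produce $\gg m^2$ collisions. The paper does not use any Sidon argument here; it proves a separate Stepanov-type estimate for \emph{restricted} products (\cref{thm: restricted}) under the extra hypothesis $AA+\lambda\not\subset S_d\cup\{0\}$, and splits into two cases: if $AA+\lambda\subset S_d\cup\{0\}$ then part (1) already applies, otherwise \cref{thm: restricted} gives $|A|\le\sqrt{2(p-1)/d}+4$. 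The paper even notes (\cref{remark:weakerbound}) that the unrestricted Stepanov bound alone only yields $2\sqrt{p/d}+O(1)$, so the factor $\sqrt{2}$ genuinely requires the restricted construction.

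Your plan for part (1) is closer in spirit but the mechanics are off. The identity $(a^2+\lambda)(b^2+\lambda)=(ab+\lambda)^2+\lambda(a-b)^2$ is specific to $d=2$ and plays no role for general $d$; the paper never uses it. More importantly, constructing a nonzero $\Phi$ of degree $<m$ vanishing on $A$ is simply a contradiction and does not by itself produce the quantitative bound $\sqrt{(p-1)/d}$; you have not indicated where the parameter $(p-1)/d$ enters. In the paper's argument (\cref{thm: stepanovea}) the auxiliary polynomial has degree at most $|A|-1+(p-1)/d$ and vanishes on $A$ with multiplicity roughly $|A|$, so comparing degree to total multiplicity gives $|A|^2\le (p-1)/d+2|A|-1$, whence $(|A|-1)^2\le (p-1)/d$. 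The refinement for $\lambda\in S_d$ comes not from a change of variables but from the observation that $0$ is an additional root of the auxiliary polynomial with multiplicity $|A|$, improving the inequality to $|A|^2\le (p-1)/d+|A|-1$.
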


We remark that our new bound on $MSD_{d}(\lambda,\mathbb{F}_p)$ is sometimes sharp. For example, we get a tight bound for a prime $p \in \{5,7,11,13,17,23,31,37,41,53,59,61,113\}$ when $d=2$ and $\lambda=1$. See also \cref{mainthm2.5} and \cref{mainthm2.5_remark} for a generalization of \cref{mainthm2} over general finite fields with non-square order under some extra assumptions. 

Nevertheless, in the case of finite fields of square order, we improve \cref{mainprop1} by a little bit under some minor assumptions; see \cref{thm: square}. Surprisingly, this tiny improvement turns out to be sharp for many \emph{infinite families} of $(q,d,\lambda)$. Equivalently, we determine $MD_d(\lambda, \F_q)$ and $MSD_d(\lambda, \F_q)$ \emph{exactly} in those families. In the following theorem, we give a sufficient condition so that $MD_d(\lambda, \F_q)$ and $MSD_d(\lambda, \F_q)$ can be determined explicitly.

\begin{theorem}\label{mainthm3}
Let $q$ be a prime power and a square, $d \geq 2$, and $d \mid (\sqrt{q}+1)$. 
 Let $S_d=\{x^d: x \in \F_q^*\}$. Suppose that there is $\alpha \in \F_q$ such that $\alpha^2 \in S_d$ and $\lambda \in \alpha^2 \F_{\sqrt{q}}^*$ (for example, if $\alpha=1$ and $\lambda \in \F_{\sqrt{q}}^*)$. Suppose further that $r\leq (p-1)\sqrt{q}$, where $r$ is the remainder of $\frac{q-1}{d}$ divided by $p\sqrt{q}$. Then $MSD_d(\lambda, \F_q)=\sqrt{q}-1$. If $q \geq 25$ and $d \geq 3$, then we have the stronger conclusion that $MD_d(\lambda, \F_q)=\sqrt{q}-1$.
\end{theorem}

Under the assumptions on \cref{mainthm3}, $\alpha \F_{\sqrt{q}}^*$ satisfies the required property $SD_{d}(\lambda,\mathbb{F}_q)$  and $D_{d}(\lambda,\mathbb{F}_q)$. Compared to \cref{mainthm2}, it is tempting to conjecture that such an algebraic construction (which is unique to finite fields with proper prime power order) is the optimal one with the required property. Given \cref{mainprop1}, to show such construction is optimal, it suffices to rule out the possibility of a Diophantine tuple with property $SD_{d}(\lambda,\mathbb{F}_q)$ and $D_{d}(\lambda,\mathbb{F}_q)$ of size $\sqrt{q}$. While this seems easy, it turned out that this requires non-trivial efforts. %This is reminiscent of the story of determining the largest size of a Diophantine tuple over integers: it is straightforward to construct a Diophantine quadruple, yet it took hundreds of years to finally disprove the existence of a Diophantine quintuple \cite{HTZ19}. 

Next, we give concrete examples where \cref{mainthm3} applies.
\begin{example}
Note that a Diophantine tuple with property $SD_2(1, \F_q)$ corresponds to a strong Diophantine tuple over $\F_q$. If $q$ is an odd square, \cref{mainthm3} implies that the largest size of a strong Diophantine tuple over $\F_q$ is given by $\sqrt{q}-1$, which is achieved by $\F_{\sqrt{q}}^*$. Note that in this case we have $r=\frac{p\sqrt{q}-1}{2}<(p-1)\sqrt{q}$. 

We also consider the case that $d=3$, $d \mid (\sqrt{q}+1)$, and $\lambda=1$. In this case, \cref{mainthm3} also applies. Note that $3 \mid (\sqrt{q}+1)$ implies that $p \equiv 2 \pmod 3$, in which case the base-$p$ representation of $\frac{q-1}{3}$ only contains the digit $\frac{p-2}{3}$ and $\frac{2p-1}{3}$, so that the condition $r \leq (p-1)\sqrt{q}$ holds.
\end{example}

One key ingredient of the proof of \cref{mainthm2} and \cref{mainthm3} is \cref{thm: stepanovea}. Indeed, \cref{thm: stepanovea} can also be viewed as on an upper bound of a bipartite version of Diophantine tuples over finite fields. For the applications to strong Diophantine tuples, \cref{thm: stepanovea} is sufficient. On the other hand, to obtain upper bounds on Diophantine tuples (which are not necessarily strong Diophantine tuples), we also need a version of \cref{thm: stepanovea} for restricted product sets, which can be found as \cref{thm: restricted}. Indeed, \cref{thm: stepanovea} alone only implies a weaker bound of the form $2\sqrt{p/d}$ for $MSD_d(\lambda, \F_p)$; see \cref{remark:weakerbound}.

\subsection{Multiplicative decompositions of shifted multiplicative subgroups} \label{1.3}
A well-known conjecture of S\'{a}rk\"{o}zy \cite{S12} asserts that the set of nonzero squares $S_2=\{x^2: x \in \F_p^*\} \subset \F_p$ cannot be written as $S_2=A+B$, where $A,B \subset \F_p$ and $|A|, |B| \geq 2$, provided that $p$ is a sufficiently large prime. This conjecture essentially predicts that the set of quadratic residues in a prime field cannot have a rich additive structure. Similarly, one expects that any non-trivial shift of $S_2$ cannot have a rich multiplicative structure. Indeed, this can be made precise via another interesting conjecture of S\'{a}rk\"{o}zy \cite{S14}, which we make progress in the current paper.
\begin{conjecture}[S\'{a}rk\"{o}zy]\label{conjecture:S}
If $p$ is a sufficiently large prime and $\lambda \in \F_p^*$, then the shifted subgroup $(S_2-\lambda)\setminus \{0\}$ cannot be written as the product $AB$, where $A,B \subset \F_p^*$ and $|A|, |B| \geq 2$. In other words, $(S_2-\lambda)\setminus \{0\}$ has no non-trivial multiplicative decomposition.
\end{conjecture}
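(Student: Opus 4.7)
My plan is to argue by contradiction. Suppose $(S_2-\lambda)\setminus\{0\} = AB$ with $A,B\subset\F_p^*$, $|A|,|B|\ge 2$, and WLOG $|A|\le|B|$. Since $AB+\lambda\subset S_2\cup\{0\}$, Theorem~\ref{thm: stepanovea} with $d=2$ gives
\[
|A||B| \le \frac{p-1}{2} + |B\cap(-\lambda A^{-1})| + |A|-1.
\]
On the other hand, $|A||B|\ge |AB|\ge (p-3)/2$ (the size of $(S_2-\lambda)\setminus\{0\}$). These two inequalities together force the map $A\times B\to AB$ to be essentially injective, with at most $O(|A|)$ collisions, and in particular $|A|\le(1+o(1))\sqrt{p/2}$.

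Next, I would convert the multiplicative condition into character sums. Let $\chi$ denote the quadratic character modulo $p$. Since $ab+\lambda\in S_2\cup\{0\}$, we have $\chi(ab+\lambda)=1$ for all $(a,b)\in A\times B$ outside a set of size $O(|A|)$ (the exceptions being pairs with $ab=-\lambda$, which by the Stepanov bound number at most $|B\cap(-\lambda A^{-1})|=O(|A|)$). Hence for any $a_1,\dots,a_k\in A$, all but $O(k)$ elements $b\in B$ satisfy $\chi(a_i b+\lambda)=1$ for every $i$. Expanding
\[
\sum_{x\in\F_p}\prod_{i=1}^{k}\frac{1+\chi(a_i x+\lambda)}{2}
=\frac{1}{2^k}\sum_{S\subset[k]}\sum_{x}\chi\!\Bigl(\prod_{i\in S}(a_i x+\lambda)\Bigr),
\]
estimating the $S=\emptyset$ term as $p$ and applying Weil's bound to each nonempty $S$, I obtain $|B|\le p/2^k + \tfrac{k-2}{2}\sqrt{p}+O(k)$. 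Setting $k=|A|$ yields $|A||B|\le |A|\,p/2^{|A|}+O(|A|^2\sqrt p)$, which for any $|A|\ge 3$ with $|A|=o(p^{1/4})$ contradicts the lower bound $|A||B|\ge(p-3)/2$ when $p$ is large, ruling out such $|A|$.

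The remaining obstacles are twofold, and constitute the principal reasons the conjecture is genuinely hard. The first is $|A|=2$: writing $A=\{a_1,a_2\}$ and rescaling by $a_1^{-1}$, the hypothesis becomes that the shifted subgroup $(S_2-\mu)\setminus\{0\}$ admits a partition $B\sqcup \gamma B$ for some $\gamma\in\F_p^*\setminus\{1\}$. Testing any multiplicative character $\psi$ against the indicator functions gives the identity
\[
(1+\psi(\gamma))\sum_{b\in B}\psi(b)=\sum_{s\in S_2,\,s\ne \mu}\psi(s-\mu),
\]
whose right-hand side is a Jacobi-type sum of size $O(\sqrt p)$ for non-principal $\psi$. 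When $\psi(\gamma)=-1$, the identity forces this Jacobi-type sum to vanish exactly, an extremely rigid condition; my plan would be to combine these vanishing constraints over characters $\psi$ whose orders share factors with the order of $\gamma$, together with the $\ell^2$ Plancherel identity applied to the indicator of $B$, to derive a contradiction. The second obstacle is the intermediate range $|A|\in[p^{1/4},\sqrt{p/2}]$, where the $2^{-k}$ savings in the character-sum estimate above are overwhelmed by the accumulated Weil error; closing this regime would require a genuinely stronger input, such as Burgess- or Karatsuba-type bilinear character-sum bounds, or an additive-combinatorial structure theorem for shifted multiplicative subgroups. The $|A|=2$ case is the hardest step, as it lies outside the reach of both Stepanov's method and standard multiplicative character estimates.
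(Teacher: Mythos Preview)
The statement is S\'{a}rk\"{o}zy's \emph{conjecture} (\cref{conjecture:S}), which the paper does \emph{not} prove; it remains open. The paper only establishes partial progress: \cref{cor:Sidon} (when $\lambda\in S_d$ the decomposition must be co-Sidon), \cref{mainthm5}, and \cref{thm:almostall} (the $\lambda=1$ case holds for almost all primes). So there is no ``paper's own proof'' to compare against, and your proposal is correctly described as incomplete.

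That said, your diagnosis of where the difficulty lies is miscalibrated. You single out $|A|=2$ as ``the hardest step'' and treat the range $|A|\in[3,p^{1/4})$ as a separate obstacle. Both of these cases are already ruled out unconditionally by \cref{thm:minlb}: if $AB=(S_2-\lambda)\setminus\{0\}$ with $|A|,|B|\ge 2$, then $\sqrt{p}\ll \min\{|A|,|B|\}\le\max\{|A|,|B|\}\ll\sqrt{p}$. In particular $|A|=2$ is impossible once $p$ is large, and your Weil-based argument for small $|A|$ is essentially a rederivation of (a weaker form of) the lower bound in \cref{thm:minlb}; it adds nothing new.

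The single genuine obstacle is the regime $|A|\asymp|B|\asymp\sqrt{p}$, which you do identify but then set aside. Here Stepanov's method (\cref{thm: stepanovea}) gives $|A||B|\le\frac{p-1}{2}+O(\sqrt{p})$, matching the trivial lower bound $|A||B|\ge|AB|\ge\frac{p-3}{2}$ up to $O(\sqrt{p})$, so nothing can be squeezed out. Weil/Burgess/Karatsuba bounds are likewise tight at this scale. As noted in \cref{sec:PaleyABC}, the Paley graph conjecture would resolve this range immediately; unconditionally, no method is known. So your proposal does not reduce the conjecture to a tractable residual case---the residual case \emph{is} the conjecture.
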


We note that it is necessary to take out the element $0$ from the shifted subgroup, for otherwise one can always decompose $S_2-\lambda$ as $\{0,1\} \cdot (S_2-\lambda)$. It appears that this problem concerning multiplicative decompositions is more difficult than the one concerning additive decompositions stated previously, given that it might depend on the parameter $\lambda$. Inspired by \cref{conjecture:S}, we formulate the following more general conjecture for any proper multiplicative subgroup. For simplicity, we denote $S_d=S_d(\F_q)=\{x^d: x \in \F_q^*\}$ to be the set of $d$-th powers in $\F_q^*$, equivalently, the subgroup of $\F_q^*$ with order $\frac{q-1}{d}$.  
\begin{conjecture}\label{conjecture:MD}
Let $d \geq 2$. If $q \equiv 1 \pmod d$ is a sufficiently large prime power, then for any $\lambda \in \F_q^*$, $(S_d-\lambda)\setminus \{0\}$ does not admit a non-trivial multiplicative decomposition, that is, there do not exist two sets $A,B \subset \F_q^*$ with $|A|,|B| \geq 2$, such that $(S_d-\lambda)\setminus \{0\}=AB$.
\end{conjecture}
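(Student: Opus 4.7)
The plan is to argue by contradiction using \cref{thm: stepanovea} as the primary tool. Suppose $A, B \subset \F_q^*$ with $|A|, |B| \geq 2$ and $AB = (S_d - \lambda) \setminus \{0\}$. Then $AB + \lambda \subset S_d \cup \{0\}$, which is precisely the hypothesis of \cref{thm: stepanovea}. Without loss of generality $|A| \leq |B|$; since $|B \cap (-\lambda A^{-1})| \leq |A|$, that theorem yields
$$|A||B| \leq \frac{q-1}{d} + 2|A| - 1,$$
with the stronger bound $|A||B| \leq \frac{q-1}{d} + |A| - 1$ when $\lambda \in S_d$. On the other hand, surjectivity of the decomposition gives the matching lower bound
$$|A||B| \geq |AB| = \frac{q-1}{d} - [\lambda \in S_d].$$
Squeezing these together forces $(|A|-1)^2 \leq (q-1)/d$, so $|A| \leq \sqrt{(q-1)/d} + 1$, and feeding back yields $|B| = \frac{q-1}{d|A|} + O(1)$. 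In particular $|A|, |B| \asymp \sqrt{q/d}$ and $|AB| = (1-o(1))|A||B|$, so any putative counterexample must be balanced and nearly Sidon.

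The next step is to close the remaining $O(\sqrt{q/d})$ gap between the upper and lower bounds, and here I see two complementary refinements. First, one can try to sharpen \cref{thm: stepanovea} in the balanced regime $|A| \approx |B| \approx \sqrt{q/d}$, for instance by imposing extra vanishing conditions on the auxiliary Stepanov polynomial that exploit the near-equality $|AB| \approx |A||B|$; a saving of any term of order $|A|$ in the upper bound would already contradict the lower bound for large $q$. Second, one can apply the restricted-product variant \cref{thm: restricted} to $A$ and $B' = B \setminus (-\lambda A^{-1})$, stripping off an additive $\Omega(|A|)$ when that intersection is small. A third, more structural route is to observe that the combination of surjectivity with the near-Sidon property should force $A$ and $B$ to lie on a low-degree algebraic curve, after which a point count via the Weil bound yields the contradiction.

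The main obstacle is the refinement step itself: Stepanov's method delivers the inequality but not a classification of its near-extremizers, and extracting the required $\Omega(|A|)$ saving appears delicate in full generality. For the special case $d = 2$, $\lambda = 1$ flagged in the abstract, the factorization $x^2 - 1 = (x-1)(x+1)$ realizes $(S_2 - 1) \setminus \{0\}$ as the image of an explicit quadratic map, bringing the curve $y^2 = (x-1)(x+1)$ into play and making the rigidity step carry through by a direct curve-counting argument. The ``almost all primes'' qualifier then follows by combining this with a sieve over $p$: the residual compatibility conditions imposed on $p$ by a hypothetical decomposition translate into the demand that $p-1$ possess a divisor of size roughly $\sqrt{(p-1)/2}$, a condition that fails on a density-one set of primes.
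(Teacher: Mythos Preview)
This statement is \cref{conjecture:MD}, an open conjecture; the paper does not prove it, and your proposal does not either---as you yourself acknowledge, the ``refinement step'' closing the $O(\sqrt{q/d})$ gap is missing. The paper's actual contributions toward it are partial: \cref{cor:Sidon}, \cref{mainthm5}/\cref{thm:almostall}, and the ternary case \cref{thm:ternary}.

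Your opening deduction from \cref{thm: stepanovea} is correct and essentially recovers \cref{cor:Sidon}, with two caveats. First, for proper prime powers $q$ that theorem carries the side hypothesis $\binom{|A|-1+(q-1)/d}{|A|}\not\equiv 0\pmod p$, which you omit; your argument as written is valid only for $q=p$. Second, your claim $|A|,|B|\asymp\sqrt{q/d}$ does not follow from the squeeze you wrote: Stepanov gives $|A|\le\sqrt{(q-1)/d}+1$ and $|B|=\tfrac{q-1}{d|A|}+O(1)$, but no lower bound on $|A|$ beyond $|A|\ge 2$. The paper obtains $\min\{|A|,|B|\}\gg\sqrt{q}/d$ (note: $\sqrt{q}/d$, not $\sqrt{q/d}$) separately via character sums in \cref{thm:minlb}.

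Your three refinement routes (sharpened Stepanov in the balanced regime, the restricted-product variant, and algebraic-curve rigidity) are all speculative and not carried out; the paper does not pursue any of them, and the full conjecture remains open. Your final paragraph, however, is close in spirit to the paper's proof of \cref{thm:almostall}: when $\lambda\in S_d$ and $q=p$, \cref{cor:Sidon} gives the \emph{exact} identity $|A||B|=|S_d|-1$ (not merely ``near-Sidon''), so $p-(d+1)$---not $p-1$---must have a divisor in an interval $(C_d\sqrt{p},\sqrt{p})$, and Ford's results on shifted primes with a divisor in a short interval show this fails for almost all $p$. The curve $y^2=(x-1)(x+1)$ plays no role in the paper's argument.
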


\cref{conjecture:MD} predicts that a shifted multiplicative subgroup of a finite field admits a non-trivial multiplicative decomposition only when it has a small size. We remark that the integer version of \cref{conjecture:MD}, namely, for each $k \geq 2$, a non-trivial shift of $k$-th powers in integers has no non-trivial multiplicative decomposition, has been proved and strengthened in a series of papers by Hajdu and S\'{a}rk\"{o}zy \cite{HS18, HS18b, HS20}. On the other hand, to the best knowledge of the authors, \cref{conjecture:MD} appears to be much harder and no partial progress has been made. For the analog of \cref{conjecture:MD} on the additive decomposition of multiplicative subgroups, we refer to recent papers \cite{HP, S14, S20, Y24} for an extensive discussion on partial progress. %We also refer to a nice survey by Elsholtz \cite{E09} on problems related to additive decompositions and multiplicative decompositions. 

Our main contribution to \cref{conjecture:MD} is the following two results. The first one is a corollary of \cref{thm: stepanovea}.
\begin{corollary}\label{cor:Sidon}
Let $d \geq 2$  and $p$ be a prime such that $d \mid (p-1)$. Let $\lambda \in S_d$. If $(S_d-\lambda) \setminus \{0\}$ can be multiplicative decomposed as the product of two sets $A,B \subset \F_p^*$ with $|A|,|B| \geq 2$, then we must have $|A||B|=|S_d|-1$, that is, all products $ab$ are distinct. In other words, $A,B$ are multiplicatively co-Sidon.    
\end{corollary}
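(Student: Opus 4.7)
The plan is to apply \cref{thm: stepanovea} directly to $A$ and $B$ and then show that the error term involving $B \cap (-\lambda A^{-1})$ vanishes, leaving a bound that can only be matched when all products $ab$ are distinct.

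First, I would verify the hypotheses of \cref{thm: stepanovea}. Since $AB = (S_d - \lambda) \setminus \{0\}$, we have $AB + \lambda \subset (S_d - \lambda) + \lambda = S_d \subset S_d \cup \{0\}$, so the key hypothesis holds with the given $\lambda \in \F_p^*$. The assumption $|A|, |B| \geq 2$ is given. Because we are working over the prime field $\F_p$ (so $q=p$), the binomial coefficient side-condition in \cref{thm: stepanovea} is automatic. Since $\lambda \in S_d$ by hypothesis, I get to use the stronger bound
\[
|A||B| \leq |S_d| + |B \cap (-\lambda A^{-1})| - 1.
\]

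Next, I would show $B \cap (-\lambda A^{-1}) = \emptyset$. An element $b \in B$ lies in $-\lambda A^{-1}$ if and only if there is some $a \in A$ with $ab = -\lambda$, equivalently $-\lambda \in AB$. But $AB = (S_d - \lambda) \setminus \{0\}$, and $-\lambda \in S_d - \lambda$ would force $0 \in S_d$, which is false because $S_d \subset \F_p^*$. Hence $-\lambda \notin AB$, so the intersection is empty and the inequality sharpens to $|A||B| \leq |S_d| - 1$.

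Finally, I would combine this with the obvious lower bound. Since $\lambda \in S_d$ forces $0 \in S_d - \lambda$, we have
\[
|AB| = |(S_d - \lambda) \setminus \{0\}| = |S_d| - 1,
\]
and also $|AB| \leq |A||B|$ trivially. Chaining these gives
\[
|S_d| - 1 = |AB| \leq |A||B| \leq |S_d| - 1,
\]
so all inequalities are equalities. In particular $|A||B| = |AB|$, which is exactly the statement that the products $ab$ for $(a,b) \in A \times B$ are pairwise distinct, i.e., $A$ and $B$ are multiplicatively co-Sidon.

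I do not expect any real obstacle here: the entire content of the corollary is packaged inside \cref{thm: stepanovea}, and the only genuine verification is the observation that $-\lambda \notin (S_d - \lambda) \setminus \{0\}$, which uses precisely the fact that $0$ is excluded from the multiplicative subgroup $S_d$. The hypothesis $\lambda \in S_d$ plays a double role: it triggers the sharper form of \cref{thm: stepanovea}, and it ensures that removing $0$ from $S_d - \lambda$ actually reduces the cardinality by one, which is what makes the bookkeeping match exactly.
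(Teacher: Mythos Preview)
Your proof is correct and follows essentially the same approach as the paper: apply \cref{thm: stepanovea} with the stronger bound available when $\lambda \in S_d$, observe that $B \cap (-\lambda A^{-1}) = \emptyset$ because $0 \notin AB + \lambda$, and combine the resulting upper bound $|A||B| \leq |S_d|-1$ with the trivial lower bound $|A||B| \geq |AB| = |S_d|-1$. The paper's proof is slightly terser but the content is identical.
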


In particular, \cref{cor:Sidon} confirms \cref{conjecture:MD} under the assumption that $q$ is a prime, $\lambda \in S_d$, and $|S_d|-1$ is a prime. The second result provides a partial answer to \cref{conjecture:MD} asymptotically.

\begin{theorem}\label{mainthm5}
Let $d \geq 2$ be fixed and $n$ be a positive integer. As $x \to \infty$, the number of primes $p \leq x$ such that $p \equiv 1 \pmod d$ and $(S_d(\F_p)-n) \setminus \{0\}$ has no non-trivial multiplicative decomposition is at least $$\bigg(\frac{1}{[\mathbb{Q}(e^{2\pi i/d}, n^{1/d}):\mathbb{Q}]}-o(1)\bigg)\pi(x).$$
%That is to say, among the primes $p\equiv 1 \pmod{d}$, the lower asymptotic density of such primes is at least $1/d$.
\end{theorem}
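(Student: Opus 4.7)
The plan is to produce the required density of good primes via the Chebotarev density theorem, then leverage the rigidity provided by \cref{cor:Sidon} to rule out nontrivial decompositions on those primes.

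First, I would apply Chebotarev to the number field $K = \mathbb{Q}(\zeta_d, n^{1/d})$ where $\zeta_d = e^{2\pi i/d}$. An unramified prime $p$ splits completely in $K$ if and only if $p \equiv 1 \pmod d$ (so that $\zeta_d$ reduces to a primitive $d$-th root of unity in $\F_p$) and $x^d - n$ splits completely modulo $p$; given $p \equiv 1 \pmod d$, the latter is equivalent to $n \in S_d(\F_p)$. By Chebotarev, the set of such primes has natural density $1/[K:\mathbb{Q}]$. Since $[K:\mathbb{Q}]$ divides $d\,\phi(d)$, the relative density of such primes among primes $p \equiv 1 \pmod d$ is at least $1/d$, which matches the final claim of the theorem.

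Second, I would reduce to the symmetric case $\lambda = 1$. For any prime $p$ in the density class above, pick $c \in \F_p^*$ with $c^d \equiv n \pmod p$; since $c^d \in S_d$ and $S_d$ is a subgroup, $c^d \cdot S_d = S_d$ and therefore
\[
(S_d(\F_p) - n) \setminus \{0\} \;=\; c^d \cdot \bigl((S_d - 1) \setminus \{0\}\bigr),
\]
so a nontrivial multiplicative decomposition of the left-hand side exists if and only if one exists for $(S_d - 1) \setminus \{0\}$. Since $1 \in S_d$, \cref{cor:Sidon} now applies to any hypothetical decomposition $(S_d - 1) \setminus \{0\} = AB$ with $|A|, |B| \geq 2$ and forces $|A||B| = |S_d| - 1$ with all products $ab$ distinct; that is, $A$ and $B$ must be multiplicatively co-Sidon, and the multiplication map $A \times B \to (S_d - 1) \setminus \{0\}$ must be a bijection.

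The hard part is the final step: ruling out this rigid co-Sidon configuration uniformly over all primes in our density class. My plan is to combine the forced disjoint partition $(S_d - 1) \setminus \{0\} = \bigsqcup_{a \in A} aB$ with a Weil-type estimate on the character sums $\sum_{a \in A,\,b \in B} \chi^j(ab+1)$, where $\chi^j$ runs over characters of order not dividing $d$: the distinct-products condition, translated into the Fourier coefficients of $\mathbf{1}_{(S_d-1)\setminus\{0\}}$, together with Cauchy--Schwarz and Jacobi-sum bounds, should yield a size constraint on $|A|$ and $|B|$ that is incompatible with the equality $|A||B| = |S_d| - 1$ imposed by \cref{cor:Sidon}. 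If the character-sum route proves too weak for the full range of admissible $|A|, |B|$, a secondary plan is to invoke the restricted-product companion \cref{thm: restricted} of \cref{thm: stepanovea}: in the distinct-products regime, one expects that theorem to sharpen the Stepanov-type inequality to $|A||B| \leq |S_d| - 2$, directly contradicting the equality produced by \cref{cor:Sidon} and completing the argument.
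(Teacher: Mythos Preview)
Your first two steps are correct and agree with the paper: Chebotarev gives the density $1/[K:\mathbb{Q}]$ of primes $p\equiv 1\pmod d$ with $n\in S_d(\F_p)$, and for such $p$ one may assume $\lambda\in S_d$ so that \cref{cor:Sidon} applies and forces $|A||B|=|S_d|-1=(p-1)/d-1$.

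The gap is in your third step. You are attempting to rule out the co-Sidon configuration for \emph{every} prime in the density class; that would prove \cref{conjecture:MD} outright when $\lambda\in S_d$, which is open. Neither of your proposed routes works: the character-sum machinery already delivers exactly the bounds in \cref{thm:minlb}, namely $C_d\sqrt{p}<\min\{|A|,|B|\}\le\max\{|A|,|B|\}\ll\sqrt{p}$, which is perfectly compatible with $|A||B|=(p-1)/d-1$; and \cref{thm: restricted} concerns the restricted self-product $A\hat\times A$, not a bipartite product $AB$, so it does not sharpen the Stepanov inequality in this setting. The paper's actual argument is different and more modest. From $|A||B|=(p-1)/d-1$ one gets $d|A||B|=p-(d+1)$, so $\min\{|A|,|B|\}$ is a divisor of $p-(d+1)$ lying in the short interval $(C_d\sqrt{p},\sqrt{p})$ by \cref{thm:minlb}. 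Ford's results on the distribution of shifted primes with a divisor in a prescribed interval (\cite[Theorems~1 and~6]{F08}) then show that the set of primes $p$ for which $p-(d+1)$ has such a divisor has natural density zero. Subtracting this density-zero exceptional set from the Chebotarev class of density $1/[K:\mathbb{Q}]$ gives the theorem. In short, the missing idea is the passage from the equality $|A||B|=(p-1)/d-1$ to an arithmetic constraint on $p$ (a divisor of $p-(d+1)$ near $\sqrt{p}$), followed by an appeal to Ford rather than an attempted direct contradiction.
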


 In particular, by setting $n=1$ and $d=2$, our result has the following significant implication to S\'{a}rk\"{o}zy's conjecture \cite{S14}: for almost all odd primes $p$, the shifted multiplicative subgroup $(S_2(\F_p)-1)\setminus \{0\}$ has no non-trivial multiplicative decomposition. In other words, if $\lambda=1$, then S\'{a}rk\"{o}zy's conjecture holds for almost all primes $p$; see \cref{thm:almostall} for a precise statement. 

Some partial progress has been made for \cref{conjecture:MD} when the multiplicative decomposition is assumed to have special forms \cite{S14,S20}. We also make progress in this direction in~\cref{sec:specialMD}. In particular, in \cref{thm:ternary}, we confirm the ternary version of \cref{conjecture:MD} in a strong sense, which generalizes \cite[Theorem 2]{S14}.

\textbf{Notations.} We follow standard notations from analytic number theory. We use $\pi$ and $\theta$ to denote the standard prime-counting functions. We adopt standard asymptotic notations $O, o, \asymp$. We also follow the Vinogradov notation $\ll$: we write $X \ll Y$ if there is an absolute constant $C>0$ so that $|X| \leq CY$.  

Throughout the paper, let $p$ be a prime and $q$ a power of $p$. Let $\F_q$ be the finite field with $q$ elements and let $\F_q^*=\F_q \setminus \{0\}$. We always assume that $d \mid (q-1)$ with $d \ge 2$, and denote $S_d(\F_q)=\{x^d: x \in \F_q^*\}$ to be the subgroup of $\F_q^*$ with order $\frac{q-1}{d}$. If $q$ is assumed to be fixed, for brevity, we simply write $S_d$ instead of $S_d(\F_q)$.  

We also need some notations for arithmetic operations among sets. Given two sets $A$ and $B$, we write the \emph{product set} $AB=\{ab: a \in A, b \in B\}$, and the \emph{sumset} $A+B=\{a+b: a \in A, b \in B\}$. Given the definition of Diophantine tuples, it is also useful to define the \emph{restricted product set} of $A$, that is, $A \hat{\times} A=\{ab: a,b \in A, a \neq b\}$.

\textbf{Structure of the paper.}
In \cref{sec: Background}, we introduce more background. In \cref{sec: Preliminary estimations}, using Gauss sums and Weil's bound, we give an upper bound on the size of the set which satisfies various multiplicative properties based on character sum estimates. In particular, we prove  \cref{mainprop1}. In \cref{sec: Stepanov}, we first prove \cref{thm: stepanovea} using Stepanov's method. At the end of the section, we deduce applications of \cref{thm: stepanovea} to Diophantine tuples and prove  \cref{mainthm2} and \cref{mainthm3}.
Via Gallagher's larger sieve inequality and other tools from analytic number theory, in \cref{sec: GDM}, we prove 
\cref{mainthm1} and \cref{mainthm1.5}. In \cref{sec: Multiplicative decompositions}, we study multiplicative decompositions and prove \cref{mainthm5}. 

\section{Background}\label{sec: Background}

\subsection{Stepanov's method} 

%The goal of this subsection is to provide some basic properties of the hyper-derivatives of polynomials over finite fields, which allow us to use Stepanov's method to prove the theorems in \cref{sec: Stepanov}. 

We first describe Stepanov's method \cite{S69}. If we can construct a low degree \emph{non-zero} auxiliary polynomial that vanishes on each element of a set $A$ with high multiplicity, then we can give an upper bound on $|A|$ based on the degree of the polynomial. \textcolor{black}{It turns out that the most challenging part of our proofs is to show that the auxiliary polynomial constructed is \emph{not identically zero}.}

To check that each root has a high multiplicity, standard derivatives might not work since we are working in a field with characteristic $p$. To resolve this issue, we need the following notation of derivatives, known as 
the \emph{Hasse derivatives} or \emph {hyper-derivatives}; see \cite[Section 6.4]{LN97}.

\begin{definition}
Let $c_0,c_1, \ldots c_d \in \F_q$. If $n$ is a non-negative integer, then the \textit{$n$-th hyper-derivative} of $f(x)=\sum_{j=0}^d c_j x^j$ is
$$
E^{(n)}(f) =\sum_{j=0}^d \binom{j}{n} c_j x^{j-n},
$$
where we follow the standard convention that $\binom{j}{n}=0$ for $j<n$, so that the $n$-th hyper-derivative is a polynomial.
\end{definition}

Following the definition, we have $E^{(0)}f=f$. We also need the next three lemmas.

\begin{lemma}[{\cite[Lemma 6.47]{LN97}}]\label{Leibniz}
If $f, g \in \F_q[x]$, then
$E^{(n)}(fg)= \sum_{k=0}^n E^{(k)} (f) E^{(n-k)} (g).$
\end{lemma}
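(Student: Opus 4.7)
The plan is to prove the Leibniz rule for hyper-derivatives by direct computation, reducing to the case of monomials via bilinearity and then invoking the Vandermonde convolution identity for binomial coefficients.

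First I would observe that both sides of the identity $E^{(n)}(fg)=\sum_{k=0}^n E^{(k)}(f)E^{(n-k)}(g)$ are bilinear in $(f,g)$: the operator $E^{(n)}$ is $\F_q$-linear by its definition as $\sum_j \binom{j}{n}c_j x^{j-n}$, and multiplication of polynomials is bilinear. So it suffices to verify the identity when $f=x^a$ and $g=x^b$ are monomials, since both sides then extend to arbitrary polynomials by linearity.

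Next, I would carry out the monomial check. On the left, $fg=x^{a+b}$ gives
\[
E^{(n)}(fg)=\binom{a+b}{n}x^{a+b-n}.
\]
On the right, using $E^{(k)}(x^a)=\binom{a}{k}x^{a-k}$ and $E^{(n-k)}(x^b)=\binom{b}{n-k}x^{b-(n-k)}$,
\[
\sum_{k=0}^n E^{(k)}(x^a)E^{(n-k)}(x^b)=x^{a+b-n}\sum_{k=0}^n\binom{a}{k}\binom{b}{n-k}.
\]
The two sides agree precisely when the Vandermonde identity $\sum_{k=0}^n\binom{a}{k}\binom{b}{n-k}=\binom{a+b}{n}$ holds, which is a purely combinatorial statement valid in any commutative ring (and in particular in $\F_q$, since binomial coefficients are integers reduced mod $p$ and the identity holds already in $\Z$).

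There is no real obstacle here; the only subtlety is making sure the convention $\binom{j}{n}=0$ for $j<n$ is used consistently so that the sums are finite and the indexing of terms with negative exponents does not appear. With that convention in place, the bilinear reduction and Vandermonde convolution give the identity immediately, which matches the statement of \cite[Lemma 6.47]{LN97}.
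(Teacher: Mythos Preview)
Your proof is correct and follows the standard approach: reduce by bilinearity to monomials and apply Vandermonde's convolution. The paper does not actually prove this lemma; it simply cites it from \cite[Lemma 6.47]{LN97}, so there is nothing to compare against beyond noting that your argument is the expected one.
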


\begin{lemma}[{\cite[Corollary 6.48]{LN97}}]\label{lem:differentiate}
Let $n,d$ be positive integers. If $a\in \F^{*}_q$ and $c \in \F_q$, then we have
\begin{equation}
E^{(n)}\big((ax+c)^d\big)=a^{n}\binom{d}{n} (ax+c)^{d-n}.
\end{equation}

\end{lemma}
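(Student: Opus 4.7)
The plan is to reduce the identity to the binomial theorem together with one standard binomial-coefficient manipulation. First I would expand the inner polynomial,
$$(ax+c)^d \;=\; \sum_{j=0}^{d} \binom{d}{j}\, a^{j} c^{\,d-j}\, x^{j},$$
and then apply the definition of $E^{(n)}$ term by term, relying on the linearity that is immediate from the definition:
$$E^{(n)}\bigl((ax+c)^d\bigr) \;=\; \sum_{j=0}^{d} \binom{j}{n}\binom{d}{j}\, a^{j} c^{\,d-j}\, x^{\,j-n}.$$
The convention $\binom{j}{n}=0$ for $j<n$ makes the negative exponents irrelevant.

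The key algebraic step is the trinomial revision identity
$$\binom{j}{n}\binom{d}{j} \;=\; \binom{d}{n}\binom{d-n}{\,j-n},$$
which follows at once by writing both sides as $\tfrac{d!}{n!\,(j-n)!\,(d-j)!}$. Plugging this in, pulling $\binom{d}{n}\,a^{n}$ out of the sum, and re-indexing by $k=j-n$ yields
$$E^{(n)}\bigl((ax+c)^d\bigr) \;=\; \binom{d}{n}\, a^{n}\sum_{k=0}^{d-n}\binom{d-n}{k}\,(ax)^{k}\, c^{\,(d-n)-k} \;=\; a^{n}\binom{d}{n}\,(ax+c)^{d-n}$$
by the binomial theorem applied in reverse, giving exactly the claimed identity. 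The degenerate case $d<n$ is automatic: the left-hand side vanishes since every summand carries $\binom{j}{n}=0$, and the right-hand side vanishes since $\binom{d}{n}=0$.

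I do not anticipate a genuine obstacle here; the identity is a polynomial equality in $\Z[a,c,x]$, so passing to $\F_q$ is automatic and the positive characteristic plays no role beyond motivating the use of hyper-derivatives rather than ordinary derivatives. The only mild point to keep in mind is that the revision identity, while trivial over $\Z$, must be interpreted in $\F_q$ via reduction of integer coefficients; since both sides agree as integers, they agree after reduction, and no independent characteristic-$p$ verification is needed.
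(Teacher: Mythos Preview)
Your proof is correct. The paper does not supply its own argument for this lemma; it simply quotes it from \cite[Corollary 6.48]{LN97}, so there is nothing to compare against beyond noting that your expand--revise--resum derivation is exactly the standard one.
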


\begin{lemma}[{\cite[Lemma 6.51]{LN97}}]\label{lem:multiplicity}
Let $f$ be a non-zero polynomial in $\F_q[x]$. If $c$ is a root of $E^{(k)}(f)$ for $k=0,1,\ldots, m-1$, then $c$ is a root of multiplicity at least $m$. 
\end{lemma}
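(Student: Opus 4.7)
The plan is to establish the polynomial Taylor-type identity
\[ f(x) = \sum_{k \ge 0} E^{(k)}(f)(c)\,(x-c)^{k} \quad \text{in } \F_q[x], \]
which is the natural characteristic-free analog of the classical Taylor expansion $f(x)=\sum_{k}\tfrac{f^{(k)}(c)}{k!}(x-c)^k$. Once this identity is in hand, the lemma is immediate: the hypothesis $E^{(k)}(f)(c)=0$ for $k=0,1,\ldots,m-1$ forces the expansion to begin at the $(x-c)^m$ term, so $(x-c)^m \mid f(x)$, and $c$ is a root of $f$ of multiplicity at least $m$.

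To prove the Taylor identity, I would first note that every $f\in\F_q[x]$ of degree $D$ can be written uniquely as $f(x)=\sum_{k=0}^{D} a_k\,(x-c)^k$ with $a_k\in\F_q$; this follows either from successive division by $(x-c)$, or directly from expanding $f((x-c)+c)$ by the binomial theorem. The task then reduces to identifying $a_n$ with $E^{(n)}(f)(c)$. Applying \cref{lem:differentiate} to the polynomial $(x-c)^k$ (taking $a=1$ and replacing the constant in that lemma by $-c$) gives
\[ E^{(n)}\bigl((x-c)^k\bigr) = \binom{k}{n}(x-c)^{k-n}, \]
which evaluates at $x=c$ to $1$ if $k=n$ and to $0$ otherwise, using the convention $\binom{k}{n}=0$ for $k<n$. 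Because $E^{(n)}$ is $\F_q$-linear (clear from its coefficient-wise definition), applying $E^{(n)}$ to $f(x)=\sum_k a_k(x-c)^k$ and then substituting $x=c$ isolates the single surviving term and yields $E^{(n)}(f)(c)=a_n$, as desired.

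The place where one must stay alert, and indeed the very reason hyper-derivatives are introduced in the first place, is the characteristic: in characteristic $p$ the classical formula $f^{(k)}(c)=k!\,a_k$ becomes useless once $k\ge p$, because $k!$ vanishes in $\F_q$ and cannot be inverted to recover $a_k$. The definition $E^{(k)}\bigl(\sum c_j x^j\bigr)=\sum_j \binom{j}{k}c_j\,x^{j-k}$ is rigged precisely so that the binomial coefficient appears in place of the factorial, allowing $E^{(k)}$ to read off $a_k$ directly. This is a definitional feature rather than a subtle calculation, so after recording it the proof is only a few lines of $\F_q$-linear algebra; notably, the full Leibniz rule of \cref{Leibniz} is not needed here at all, only \cref{lem:differentiate}.
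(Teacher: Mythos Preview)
Your argument is correct and is essentially the standard proof (and indeed the one given in the cited reference \cite[Lemma 6.51]{LN97}): expand $f$ about $c$ as $\sum_k a_k(x-c)^k$, use linearity together with $E^{(n)}\bigl((x-c)^k\bigr)=\binom{k}{n}(x-c)^{k-n}$ to identify $a_n=E^{(n)}(f)(c)$, and conclude. Note that the present paper does not supply its own proof of this lemma at all---it simply quotes the result from \cite{LN97}---so there is nothing further to compare against.
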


\subsection{Gallagher's larger sieve inequality}

In this subsection, we introduce Gallagher's larger sieve inequality and provide necessary estimations from it. Gallagher's larger sieve inequality will be one of the main ingredients for the proof of \cref{mainthm1}. In 1971, Gallagher \cite{G71} discovered the following sieve inequality.
\begin{theorem}\textup{(Gallagher's larger sieve inequality)}\label{larger}  Let $N$ be a natural number and 
 $A\subset\{1,2,\ldots, N\}$.  Let ${\mathcal P}$ be a set of primes.
For each prime $p \in {\mathcal P}$, let $A_p=A \pmod{p}$.
For any $1<Q\leq N$, we have
\begin{equation}
 |A|\leq \frac{\underset{p\leq Q, p\in \mathcal{P}}\sum\log p - \log N}{\underset{p\leq Q, p \in \mathcal{P}}\sum\frac{\log p}{|A_p|}-\log N},
\end{equation}
provided that the denominator is positive.
\end{theorem}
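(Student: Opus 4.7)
The plan is to prove Gallagher's inequality by a standard double-counting argument applied to a carefully chosen weighted pair count. Specifically, I would consider the quantity
$$T \;=\; \sum_{\substack{p \in \mathcal{P} \\ p \leq Q}} \log p \cdot \bigl|\{(a,a') \in A \times A : a \equiv a' \pmod{p}\}\bigr|$$
and estimate it from below and from above.

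For the lower bound, fix $p \in \mathcal{P}$ with $p \leq Q$ and partition $A$ into residue classes mod $p$. There are exactly $|A_p|$ non-empty classes, with sizes, say, $n_1,\ldots,n_{|A_p|}$, summing to $|A|$. The number of congruent pairs is $\sum_i n_i^2$, which by the Cauchy--Schwarz inequality (or by convexity) is at least $|A|^2/|A_p|$. Summing over $p$ yields
$$T \;\geq\; |A|^2 \sum_{\substack{p \in \mathcal{P} \\ p \leq Q}} \frac{\log p}{|A_p|}.$$

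For the upper bound, I would swap the order of summation:
$$T \;=\; \sum_{(a,a') \in A \times A}\ \sum_{\substack{p \in \mathcal{P},\ p \leq Q \\ p \mid (a-a')}} \log p.$$
Split into the diagonal contribution ($a = a'$) and the off-diagonal contribution. The diagonal gives $|A|\cdot S$, where $S = \sum_{p \in \mathcal{P},\, p \leq Q} \log p$. For each off-diagonal pair $(a,a')$, the primes of $\mathcal{P}$ bounded by $Q$ dividing $a-a'$ multiply to at most $|a-a'| \leq N-1 < N$, so the inner sum is at most $\log N$. Since there are $|A|(|A|-1) \leq |A|^2 - |A|$ off-diagonal pairs, we get
$$T \;\leq\; |A| S + (|A|^2 - |A|)\log N.$$

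Combining the two bounds and rearranging gives
$$|A|^2 \left(\sum_{\substack{p \in \mathcal{P} \\ p \leq Q}} \frac{\log p}{|A_p|} - \log N\right) \;\leq\; |A|(S - \log N),$$
from which the claimed inequality follows by dividing by $|A|$ and by the (positive, by hypothesis) denominator. There is essentially no obstacle here: the only conceptual step is recognizing that the key quantity should weight congruent pairs by $\log p$, so that the fundamental theorem of arithmetic can control the total contribution of each individual pair $(a,a')$ by $\log|a-a'| \leq \log N$. Everything else is a routine application of Cauchy--Schwarz and arithmetic rearrangement.
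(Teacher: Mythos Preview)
Your proof is correct and is precisely Gallagher's original argument: double-count congruent pairs weighted by $\log p$, bound below by Cauchy--Schwarz on the residue-class sizes, bound above via $\sum_{p\mid(a-a')}\log p\le\log|a-a'|\le\log N$ for off-diagonal pairs, and rearrange. The paper itself does not supply a proof of this statement; it is quoted as a known result with a citation to Gallagher's 1971 paper, so there is nothing further to compare.
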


As a preparation to apply Gallagher's larger sieve in our proof, we need to establish a few estimates related to primes in arithmetic progressions. 
For $(a,k)=1$, we follow the standard notation
\[\theta(x;k,a)=\sum_{\substack{p\leq x \\ p\equiv a \text{ mod } k}} \log p.\]
For our purposes, $\log k$ could be as large as $\sqrt{\log \log x}$ (for example, see \cref{mainthm1}), and we need the Siegel-Walfisz theorem to estimate $\theta(x;k,a)$.
\begin{lemma} [{\cite[Corollary 11.21]{MV07}}] \label{Siegel-Walfisz}
Let $A>0$ be a constant. There is a constant $c_1>0$ such that
\[\theta(Q;k,a)=\frac{Q}{\phi(k)} +O_A\bigg(Q\exp(-c_1\sqrt{\log Q})\bigg)\]
holds uniformly for $k\leq (\log Q)^A$ and $(a,k)=1$. 
\end{lemma}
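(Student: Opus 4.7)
This lemma is the Siegel--Walfisz theorem, which is a standard result in analytic number theory; I would prove it by combining the explicit formula for prime sums twisted by Dirichlet characters, the classical zero-free region for Dirichlet $L$-functions, and Siegel's theorem to control the potential exceptional real zero.

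First, by orthogonality of characters modulo $k$,
\[
\theta(Q;k,a) \;=\; \frac{1}{\phi(k)}\sum_{\chi \bmod k}\overline{\chi}(a)\,\theta(Q,\chi),
\qquad \theta(Q,\chi)=\sum_{p\leq Q}\chi(p)\log p.
\]
The principal character $\chi_{0}$ supplies the main term $Q/\phi(k)$ plus error $O(Q\exp(-c\sqrt{\log Q}))$ from the usual prime number theorem with classical error. For each non-principal $\chi$, I would pass from $\theta(Q,\chi)$ to $\psi(Q,\chi)$ at negligible cost and apply the truncated explicit formula
\[
\psi(Q,\chi) \;=\; -\sum_{|\gamma|\leq T}\frac{Q^{\rho}}{\rho} \;+\; O\!\left(\frac{Q\log^{2}(kQT)}{T}\right),
\]
where the sum runs over non-trivial zeros $\rho=\beta+i\gamma$ of $L(s,\chi)$.

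Taking $T=\exp(\sqrt{\log Q})$ and invoking the classical zero-free region $\sigma \geq 1-c/\log(k(|t|+2))$ together with standard estimates for the number of zeros of $L(s,\chi)$ in horizontal strips, the contribution of all non-exceptional zeros is $O(Q\exp(-c_{1}\sqrt{\log Q}))$ uniformly for $k\leq (\log Q)^{A}$. The only remaining term is the contribution of a possible Siegel zero $\beta_{1}<1$ attached to at most one real character modulo $k$. Siegel's theorem yields: for every $\epsilon>0$ there exists $c(\epsilon)>0$ with $\beta_{1}<1-c(\epsilon)k^{-\epsilon}$. Choosing $\epsilon=1/(2A)$ gives $k^{\epsilon}\leq (\log Q)^{1/2}$, so $Q^{\beta_{1}}\ll Q\exp(-c(\epsilon)\sqrt{\log Q})$, which is absorbed into the stated error.

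The principal obstacle is precisely the Siegel-zero contribution: Siegel's theorem is ineffective, so the constant $c_{1}=c_{1}(A)$ cannot be made explicit in $A$. This ineffectivity is intrinsic to the Siegel--Walfisz theorem and explains the $O_{A}$ notation in the statement; an effective proof would instead produce a markedly weaker savings in this range of $k$.
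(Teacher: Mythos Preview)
The paper does not give its own proof of this lemma; it is simply quoted from \cite[Corollary~11.21]{MV07} as a standard reference. Your sketch is a correct outline of the classical argument behind the Siegel--Walfisz theorem (orthogonality, explicit formula, zero-free region, and Siegel's theorem for the possible exceptional zero), and in particular your remark on the ineffectivity of the constant $c_1=c_1(A)$ is accurate.
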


A standard application of partial summation with \cref{Siegel-Walfisz} leads to the following corollary. 

\begin{corollary}\label{partial-summation}
There is a constant $c>0$, such that
$$
\sum_{\substack{p \leq Q\\ p \equiv a \textup{ mod } k}} \frac{\log p}{\sqrt{p}}=\frac{2\sqrt{Q}}{\phi(k)} +O\left( \sqrt{Q} \exp(-c\sqrt{\log Q}) \right)
$$    
holds uniformly for $k \leq \log Q$ and $(a,k)=1$. 
\end{corollary}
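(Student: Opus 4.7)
The plan is to deduce the asymptotic from the Siegel--Walfisz theorem (\cref{Siegel-Walfisz}) via Abel's partial summation. Setting $f(t) = t^{-1/2}$ and writing the sum as a Stieltjes integral against $\theta(t;k,a)$, one obtains
\[
\sum_{\substack{p\leq Q \\ p\equiv a \,(k)}} \frac{\log p}{\sqrt{p}} \;=\; \frac{\theta(Q;k,a)}{\sqrt{Q}} \;+\; \frac{1}{2}\int_{2}^{Q} \frac{\theta(t;k,a)}{t^{3/2}}\,dt.
\]
Substituting $\theta(t;k,a) = t/\phi(k) + E(t)$, the main-term integral evaluates to $\sqrt{Q}/\phi(k) + O(1/\phi(k))$ and the boundary term to $\sqrt{Q}/\phi(k)$, summing to the required $2\sqrt{Q}/\phi(k)$.

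The main technical obstacle is that the Siegel--Walfisz bound $E(t) \ll t\exp(-c_1\sqrt{\log t})$ is only guaranteed in the range $k \leq (\log t)^A$, and for small $t$ this fails when $k$ is of size $\log Q$. To handle this, I would split the integral at $t_0 := \exp(\sqrt{\log Q})$. For $t \geq t_0$ one has $(\log t)^2 \geq \log Q \geq k$, so \cref{Siegel-Walfisz} with $A = 2$ applies. For $t < t_0$, I would use only the Chebyshev-type bound $\theta(t;k,a) \leq \theta(t) \ll t$, giving $E(t) \ll t$ and hence the contribution
\[
\int_{2}^{t_0} |E(t)|\, t^{-3/2}\,dt \;\ll\; \sqrt{t_0} \;=\; \exp\bigl(\tfrac{1}{2}\sqrt{\log Q}\bigr),
\]
which is absorbed into $O(\sqrt{Q}\exp(-c\sqrt{\log Q}))$ for any $c < 1/2$ once $Q$ is large.

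For the high range $[t_0,Q]$ the task reduces to estimating $\int_{t_0}^{Q} t^{-1/2}\exp(-c_1\sqrt{\log t})\,dt$. The key computation is that
\[
\frac{d}{dt}\Bigl(\sqrt{t}\,\exp(-c_1\sqrt{\log t})\Bigr) \;=\; \frac{\exp(-c_1\sqrt{\log t})}{2\sqrt{t}}\left(1 - \frac{c_1}{\sqrt{\log t}}\right),
\]
and the right-hand side exceeds a positive constant multiple of the integrand once $\sqrt{\log t} \geq 2c_1$. By the fundamental theorem of calculus this bounds the integral by $O\bigl(\sqrt{Q}\exp(-c_1\sqrt{\log Q})\bigr)$. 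The boundary term $\theta(Q;k,a)/\sqrt{Q}$ contributes an error of the same shape directly from \cref{Siegel-Walfisz}. Combining the main terms with the error bounds and choosing any $c$ slightly smaller than $\min(c_1, 1/2)$ yields the claimed estimate, with the uniformity in $k \leq \log Q$ inherited from the uniformity in \cref{Siegel-Walfisz}.
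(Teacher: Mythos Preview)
Your proof is correct and follows essentially the same partial-summation strategy as the paper. The paper splits at $Q^{1/4}$ rather than $\exp(\sqrt{\log Q})$ and, on the high range, simplifies the error integral by using the cruder bound $\sqrt{\log t} \geq \tfrac{1}{2}\sqrt{\log Q}$ (so that $\exp(-c_1\sqrt{\log t}) \leq \exp(-c\sqrt{\log Q})$ with $c = c_1/2$, making the integral trivial) instead of your antiderivative computation, but these are minor tactical differences within the same approach.
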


% \begin{proof}
% First, note that by the prime number theorem, 
% $$
% \sum_{\substack{p \leq Q^{1/4}\\ p \equiv a \text{ mod } k}} \frac{\log p}{\sqrt{p}} \ll \sum_{p \leq Q^{1/4}} \log p \ll Q^{1/4}.
% $$    
% Using partial summation, we have
% \begin{align}\label{eq: int}
% \sum_{\substack{Q^{1/4}<p \leq Q\\ p \equiv a \text{ mod } k}} \frac{\log p}{\sqrt{p}}=\frac{\theta(Q;k,a)}{\sqrt{Q}}-\frac{\theta(Q^{1/4};k,a)}{\sqrt{Q^{1/4}}}+\frac{1}{2}\int_{Q^{1/4}}^Q \frac{\theta(t;k,a)}{t^{3/2}} \,dt.
% \end{align}  
% Then \cref{Siegel-Walfisz} implies that there is a constant $c_1$ such that 
% \[\theta(t;k,a)=\frac{t}{\phi(k)} +O(t\exp(-c_1\sqrt{\log t}))\]
% holds uniformly when $Q^{1/4} \leq t \leq Q$ and $k \leq \log Q$. 
% Also, to estimate the integral term on the right-hand side of equation \cref{eq: int}, we note that when $Q^{1/4} \leq t \leq Q$. Then we have $\sqrt{\log t} \geq \sqrt{\log Q}/2$. Let $c=c_1/2$; then 
% \[\theta(t;k,a)=\frac{t}{\phi(k)} +O(t\exp(-c\sqrt{\log Q}))\]
% holds uniformly when $Q^{1/4} \leq t \leq Q$ and $k \leq \log Q$. Therefore,
% \begin{align*}
% \sum_{\substack{p \leq Q\\ p \equiv a \text{ mod } k}} \frac{\log p}{\sqrt{p}}
% &=O(Q^{1/4})+\frac{\sqrt{Q}}{\phi(k)} +O\left( \sqrt{Q} \exp(-c\sqrt{\log Q}) \right)+\frac{\sqrt{Q}}{\phi(k)}+O\left( \sqrt{Q} \exp(-c\sqrt{\log Q}) \right)\\
% &=\frac{2\sqrt{Q}}{\phi(k)} +O\left( \sqrt{Q} \exp(-c\sqrt{\log Q}) \right),    
% \end{align*}
% as desired.
% \end{proof}

We also need the following lemma.

\begin{lemma}[{\cite[page 72]{BM19}}]\label{p mid n}
Let $n$ be a positive integer. Then
$$
\sum_{p \mid n} \frac{\log p}{\sqrt{p}} \ll (\log n)^{1/2}.
$$    
\end{lemma}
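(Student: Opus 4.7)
The plan is to split the prime divisors of $n$ into ``small'' and ``large'' buckets at some cutoff $y$, bound each bucket separately, and then optimize $y$. The function $f(p) = \log p/\sqrt{p}$ is decreasing for $p \geq e^2$, and this monotonicity is what makes the large-prime bucket manageable.

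For the small-prime bucket, I would estimate
\[
\sum_{\substack{p \leq y\\ p \mid n}} \frac{\log p}{\sqrt{p}} \leq \sum_{p \leq y} \frac{\log p}{\sqrt{p}} \ll \sqrt{y},
\]
by writing the latter sum as a Riemann--Stieltjes integral against $\theta(t) = \sum_{p \leq t} \log p$, invoking Chebyshev's bound $\theta(t) \ll t$, and using partial summation. This step is entirely routine. For the large-prime bucket, the key observation is that each prime divisor $p > y$ contributes a factor of at least $y$ to $n$, so the number of such primes is at most $\log n / \log y$. Combined with the monotonicity of $f$, I get
\[
\sum_{\substack{p > y\\ p \mid n}} \frac{\log p}{\sqrt{p}} \leq \frac{\log n}{\log y} \cdot \frac{\log y}{\sqrt{y}} = \frac{\log n}{\sqrt{y}}.
\]
Adding the two bounds gives $\sum_{p \mid n} \frac{\log p}{\sqrt{p}} \ll \sqrt{y} + \log n / \sqrt{y}$.

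Finally, choosing $y = \log n$ balances the two terms and yields the desired bound $\ll (\log n)^{1/2}$. (One should check separately the trivial case $n \leq e^{e^2}$ or similar, where $y$ would otherwise be too small for the monotonicity argument; this is handled by absorbing it into the implied constant.)

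The main obstacle, such as it is, is nothing more than recognizing the correct cutoff $y = \log n$ and justifying the monotonicity step cleanly. There is no deep input: only Chebyshev's estimate, partial summation, and the pigeonhole observation that a number has at most $\log n / \log y$ prime factors exceeding $y$. Accordingly, this is a short computation rather than a conceptual proof, and no serious difficulty is anticipated.
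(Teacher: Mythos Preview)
Your argument is correct. The paper does not actually supply a proof of this lemma; it merely quotes the result from \cite[page~72]{BM19}. Your splitting at $y=\log n$, together with the partial-summation bound $\sum_{p\le y}\frac{\log p}{\sqrt{p}}\ll\sqrt{y}$ and the pigeonhole count $\#\{p\mid n:p>y\}\le \log n/\log y$, is exactly the standard way to establish this estimate, and all steps are justified as you describe.
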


\subsection{An effective estimate for $M_k(n, \frac{k}{k-2})${}}
\textcolor{black}{Following \cite{DKM22}, for each real number $L>0$, we write 
\begin{equation*}
    M_k(n;L):=\sup\{|S \cap [n^L,\infty)|: S \text{ satisfies property } D_k(n)\}.
\end{equation*}
It is shown in \cite{DKM22} that $M_k(n,3) \ll_{k} 1$ as $n \to \infty$. For our application, we show a stronger result that $M_k(n,\frac{k}{k-2})\ll_{k} 1$ and we will make this estimate explicit and effective. We follow the proof in \cite{DKM22} closely and prove the following proposition, which will be used later in the proof of \cref{mainthm1}. }

\begin{proposition}\label{prop:effective1}
If $k \geq 3$ and $n>(2ke)^{k^2}$, then $M_k(n,\frac{k}{k-2})\ll \log k \log \log k$, where the implicit constant is absolute.    
\end{proposition}

Let $k \geq 3$. Let $m= M_k(n,\frac{k}{k-2})$ and $A=\{a_1, a_2,  \ldots, a_m\}$ be a generalized $m$-tuple with property $D_k(n)$ and $n^{\frac{k}{k-2}} < a_1<a_2<\cdots <a_m$. Consider the system of equations
\begin{equation}
\label{systemR1}
\left\{ \begin{array}{ll}
a_1 x + n = u^k &\\
a_2 x + n = v^k. &
\end{array} \right.
\end{equation} 
Clearly, for each $i \geq 3$, $x=a_i$ is a solution to this system, and we denote $u_i, v_i$ so that $a_1a_i+n=u_i^k$ and $a_2a_i+n=v_i^k$. 
Let $\alpha:=(a_1/a_2)^{1/k}$.

The following lemma is a generalization of \cite[Lemma 3.1]{DKM22}, showing that $u_i/v_i$ provides a ``good" rational approximation to $\alpha$ if $n$ is large. Note that in \cite[Lemma 3.1]{DKM22}, it was further assumed that $k$ is odd and $L=3$. Nevertheless, an almost identical proof works, and we skip the proof.

\begin{lemma}\label{approximation1}
Let
\begin{equation*}
c(k):= \prod_{j=1}^{\lfloor (k-1)/2\rfloor} \left(\sin \frac{2\pi j}{k}\right)^2.
\end{equation*}
Assume that $n> (2/c(k))^{(k-2)/2}$. Then for each $3 \leq i \leq m$, we have
\begin{equation}
    \bigg|\frac{u_i}{v_i} - \alpha\bigg| \leq \frac{a_2}{2v_i^k}.
\end{equation}
\end{lemma}

\begin{corollary}\label{super}
Assume that $n> (2/c(k))^{(k-2)/2}$. Then $v_i \geq a_2^{4}$ for each $14 \leq i \leq m$ and 
\begin{equation}\label{approx}
\bigg|\frac{u_i}{v_i} - \alpha \bigg| < \frac{1}{v_i^{k-1/2}}.
\end{equation}
\end{corollary}  
\begin{proof}
Let $2 \leq i \leq m-3$. Applying the gap principle from \cite[Lemma 2.4]{DKM22} to $a_i, a_{i+1},a_{i+2}, a_{i+3}$, we have
$$
a_{i+1}a_{i+3} \geq k^kn^{-k} (a_ia_{i+2})^{k-1} \geq k^k n^{-k} (a_ia_{i+1})^{k-1}. 
$$
It follows that $$a_{i+3} \geq  a_i^{k-1}a_{i+1}^{k-2}n^{-k} \geq a_i^{k-1}.$$ 
In particular $a_{14} \geq a_2^{(k-1)^4}\geq a_2^{4k}$. Thus, if $i \geq 14$, then $v_i \geq a_i^{1/k} \geq a_{14}^{1/k} \geq a_2^4$ and inequality~\eqref{approx} follows from \cref{approximation1}.
\end{proof}

Now we are ready to prove \cref{prop:effective1}. Recall we have the inequality $\sin x \geq \frac{2x}{\pi}$ for $x \in [0,\frac{\pi}{2}]$, and the inequality
$s! \geq (s/e)^s $
for all positive integers $s$. It follows that
$$
\sqrt{c(k)}=\prod_{j=1}^{(k-1)/2} \sin \frac{2\pi j}{k}=\prod_{j=1}^{(k-1)/2} \sin \frac{\pi j}{k} \geq \prod_{j=1}^{(k-1)/2} \frac{2j}{k}=\frac{((k-1)/2)!}{(k/2)^{(k-1)/2}} \geq  \bigg(\frac{k-1}{ke}\bigg)^{(k-1)/2}
$$
when $k$ is odd, and 
$$
(c(k))^{1/4}=\sqrt{\prod_{j=1}^{(k-2)/2} \sin \frac{2\pi j}{k}}=\prod_{j=1}^{\lfloor k/4 \rfloor} \sin \frac{\pi j}{k/2} \geq \prod_{j=1}^{\lfloor k/4 \rfloor} \frac{4j}{k}=\frac{(\lfloor k/4 \rfloor)!}{(k/4)^{\lfloor k/4 \rfloor}} \geq  \bigg(\frac{4\lfloor k/4 \rfloor}{ke}\bigg)^{\lfloor k/4 \rfloor}
$$
when $k$ is even. Thus, when $k \geq 3$, we always have
$$
\frac{2}{c(k)}\leq 2\bigg(\frac{ke}{k-2}\bigg)^{k} \leq 2(ke)^k.
$$
Therefore, when $n>(2ke)^{k^2}$, we can apply \cref{approximation1}. Note that the absolute height of $\alpha$ is  $H(\alpha) \leq a_2^{1/k}$.  Since $k \geq 3$, for $14\leq i \leq m$, \cref{approximation1} implies that
\begin{equation*}
    \left| \frac{u_i}{v_i} - \alpha\right| \leq \frac{1}{v_i^{k-1/2}} \leq   \frac{1}{v_i^{2.5}};
\end{equation*}
moreover, $\max(u_i,v_i) = v_i>a_2^{1/k} \geq \max(H(\alpha), 2)$. Therefore, we can apply the quantitative Roth's theorem due to Evertse \cite{E10} (see also \cite[Theorem 2.2]{DKM22}) to conclude that
$$
m \leq 13+ 2^{28} \log (2k) \log (2\log 2k) \ll \log k \log \log k,
$$
where the implicit constant is absolute.

\subsection{Implications of the Paley graph conjecture}\label{sec:PaleyABC}

The Paley graph conjecture on double character sums implies many results of the present paper related to the estimation of character sums. We record the statement of the conjecture (see for example \cite{DKM22,GM20}).

\begin{conjecture}[Paley graph conjecture]\label{Paley-graph-conjecture}
Let $\epsilon >0$ be a real number. Then there is $p_0=p_0(\epsilon)$ and $\delta=\delta(\epsilon)>0$ such that for any prime $p>p_0$, any $A, B \subseteq \mathbb{F}_p$ with $|A|,|B| > p^{\epsilon}$, and any non-trivial multiplicative character $\chi$  of $\F_p$, the following inequality holds:
\begin{equation*}
    \bigg| \sum_{a\in A,\,  b\in B} \chi(a+b)\bigg| \leq p^{-\delta} |A| |B|.
\end{equation*}
\end{conjecture}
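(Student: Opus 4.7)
The Paley graph conjecture is a notoriously open problem in analytic number theory, so rather than pretending to give a complete proof, I will sketch the natural approach one would attempt and indicate where current techniques break down. The baseline attack proceeds via Weil's bound (\cref{Weil}) after a single application of Cauchy--Schwarz. Writing
\[
S = \sum_{a \in A,\, b \in B} \chi(a+b),
\]
one has $|S|^2 \leq |B| \sum_{b, b' \in B} \sum_{a \in A} \chi(a+b)\overline{\chi(a+b')}$; extending the $a$-sum to all of $\F_p$ at a cost bounded by $|A|$ and applying \cref{lem:unique} to the resulting complete sum yields $|S|^2 \ll |A|^2|B| + |A||B|^2 \sqrt{p}$. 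This already settles the conjecture in the range $|A|,|B| \gg p^{1/2+\epsilon}$ essentially for free. The genuine content of the conjecture is the regime $p^{\epsilon} \leq |A|,|B| \leq p^{1/2}$, where Weil is insufficient.

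To push below the square-root barrier, the plan is to iterate Cauchy--Schwarz $k$ times on the bilinear sum, reducing the estimation of $S$ to the control of the higher additive energies $E_+^{(k)}(A)$ and $E_+^{(k)}(B)$, and then to invoke sum--product estimates in $\F_p$ in the spirit of Bourgain--Glibichuk--Konyagin. The intuition is that either $A$ and $B$ have small additive energy--in which case the iterated Cauchy--Schwarz already gives cancellation--or else they concentrate on sets of small multiplicative doubling, which must then look essentially like cosets of a small-index subgroup of $\F_p^*$, and one derives a contradiction from the fact that $\chi$ is non-trivial on such subgroups. As a supplementary route one can try a Burgess-style amplification, embedding $A$ or $B$ into generalized arithmetic progressions via Pl\"unnecke--Ruzsa and then transferring known short-interval character sum bounds.

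The hard part, and the reason the conjecture has resisted decades of work, is the balanced regime $|A|,|B| \asymp p^{1/2-\delta}$: Weil is barely out of reach, the sum--product bounds in $\F_p$ lose too much in each Cauchy--Schwarz iteration to descend all the way to the $p^{\epsilon}$ threshold, and one cannot \emph{a priori} exclude subtle algebraic structure on $A, B$ (such as near-coset behavior relative to low-index multiplicative subgroups) against which the natural bilinear arguments are insensitive. Any real progress would, I expect, require a sum--product input genuinely adapted to the multiplicative character $\chi$ rather than to addition alone, combined with a polynomial or incidence-geometric argument controlling the joint distribution of $A+B$ against the level sets of $\chi$. Absent such an input, the realistic ``plan'' is to record the partial progress of Chang, Bourgain--Garaev, and Hanson, and to flag the sub-$\sqrt{p}$ regime as the essential barrier that the present paper treats as a black-box hypothesis in \cref{sec:PaleyABC}.
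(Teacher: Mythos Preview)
You have correctly identified the situation: the statement is a \emph{conjecture}, not a theorem, and the paper makes no attempt to prove it. In \cref{sec:PaleyABC} the authors merely ``record the statement of the conjecture'' and explicitly write that ``the Paley graph conjecture itself remains widely open, and our results are unconditional.'' There is therefore no paper proof to compare against, and your decision not to fabricate one is the right call. Your sketch of the Cauchy--Schwarz/Weil baseline, the iterated energy approach, and the sub-$\sqrt{p}$ barrier is an accurate summary of why the problem is hard and matches the spirit in which the paper invokes it as a black-box hypothesis.
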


The connection between the Paley graph conjecture and the problem of bounding the size of Diophantine tuples was first observed by G\"{u}lo\u{g}lu and Murty in \cite{GM20}. Let $d \geq 2$ be fixed, $\lambda \in \F_p^*$, where $p \equiv 1 \pmod d$ is a prime. The Paley graph conjecture trivially implies $MD_{d}(\lambda, \F_p)=p^{o(1)}$ and $MSD_{d}(\lambda, \F_p)=p^{o(1)}$ as $p \to \infty$. Also, the bound on $M_k(n)$ in \cref{mainthm1} can be improved to $(\log n)^{o(1)}$ (see \cite{DKM22}, \cite{GM20}) when $k$ is fixed and $n \to \infty$. Furthermore, the Paley graph conjecture also immediately implies S\'{a}rk\"{o}zy's conjecture (\cref{conjecture:S}) in view of \cref{thm:minlb}. However, the Paley graph conjecture itself remains widely open, and our results are unconditional. We refer to \cite{SS22} and the references therein for recent progress on the Paley graph conjecture assuming $A,B$ have small doubling.

\begin{comment}
    
We also record a remark \footnote{This argument was provided by A. Dixit.} that the abc conjecture implies that the ``tail" part of the Diophantine tuples in $\mathbb{Z}$ of property $D_{k}(n)$ is bounded by a constant. We let $k\geq 3$ be an integer and $L>0$. Then the system of equations \cref{systemR1} implies $$a_1v^k-a_2u^k=(a_1-a_2)n,$$ and hence the abc conjecture implies \textcolor{red}{this is not accurate. abc conjecture requires being coprime}
\[a_1v^k\ll \rad(a_1a_2 uvn(a_2- a_1))^{1+\epsilon}\leq (a^3_2v^2n)^{1+\epsilon},\]
where $\rad(n)$ is the radical of $n$. Since $k\geq 3$, we have $v\ll \max(n,a_2)^{\kappa}$ for some $\kappa>0$. If $k$ is odd, a simple gap principle from \cite[Lemma 3.1]{DKM22} implies that $M_{k}(n;L)$ is bounded by a constant that depends only on $k$. For even $k$, we can also prove a variant following the argument in \cite{DKM22}. Thus, for a fixed $k \geq 3$, and fixed $L>0$, under the abc conjecture, we have $M_k(n)\leq (L+o(1)) \phi(k) \log n$ using Gallagher's larger sieve inequality as in \cite{DKM22} to handle the contribution of those elements which are less than $n^L$. By taking $L \to 0$, we conclude conditionally $M_{k}(n)=o(\log n)$ as $n \to \infty$. We note that \cref{mainthm1.5} sometimes gives an even better bound than this conditional bound for specially chosen values of $k$ and $n$. 
\end{comment}

\section{Preliminary estimations for product sets in shifted multiplicative subgroups}\label{sec: Preliminary estimations}

\subsection{Character sum estimate and the square root upper bound}

The purpose of this subsection is to prove \cref{mainprop1} by establishing an upper bound on the double character sum in \cref{charsum:sym} using basic properties of characters and Gauss sums. For any prime $p$, and any $x \in \F_p$, we follow the standard notation that $e_p(x)=\exp(2 \pi i x/p)$, where we embed $\F_p$ into $\Z$. We refer the reader to \cite[Chapter 5]{LN97} for more results related to Gauss sums and character sums. 

We refer to \cite[Section 2]{BM19} for a historical discussion of Vinogradov's inequality~\cref{Vinogradov}. Gyarmati \cite[Theorem 7]{G01} and Becker and Murty \cite[Proposition 2.7]{BM19} independently showed that the Legendre symbol in inequality~\cref{Vinogradov} can be replaced with any non-trivial Dirichlet character modulo $p$. For our purposes, we extend Vinogradov's inequality \cref{Vinogradov} to all finite fields $\F_q$ and all nontrivial multiplicative characters $\chi$ of $\F_q$, with a slightly improved upper bound. 

\begin{proposition}\label{charsum:sym}
Let $\chi$ be a non-trivial multiplicative character of $\F_q$ and $\lambda \in \F_q^*$. For any $A,B \subset \F_q^*$,  we have
  $$
 \bigg|\sum_{a\in A,\, b\in B}\chi(ab+\lambda)\bigg|  \leq \sqrt{q|A||B|}\bigg(1-\frac{\max\{|A|,|B|\}}{q}\bigg)^{1/2}.
 $$    
\end{proposition}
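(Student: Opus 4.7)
The plan is to prove this via a Cauchy--Schwarz argument combined with a second-moment computation for an inner character sum, with the sharpening factor $(1-\max/q)^{1/2}$ arising from subtracting a mean.

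First I would set $F(a) = \sum_{b\in B}\chi(ab+\lambda)$ and compute the second moment $\sum_{a\in\F_q}|F(a)|^2$ by expanding:
\[
\sum_{a\in\F_q}|F(a)|^2 = \sum_{b_1,b_2\in B}\sum_{a\in\F_q}\chi(ab_1+\lambda)\overline{\chi(ab_2+\lambda)}.
\]
The diagonal $b_1=b_2$ contributes $|B|(q-1)$. For $b_1\neq b_2$ in $\F_q^*$, the map $a\mapsto (ab_1+\lambda)/(ab_2+\lambda)$ is a M\"obius bijection on $\mathbb{P}^1(\F_q)$, sending $-\lambda/b_2$ to $\infty$ and $\infty$ to $b_1/b_2$; using $\chi(0)=0$ and $\sum_{w\in\F_q^*}\chi(w)=0$, the inner sum collapses to $-\chi(b_1)\overline{\chi(b_2)}$. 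Summing over $b_1\neq b_2$ and writing $T_B=\sum_{b\in B}\chi(b)$, I obtain the clean identity $\sum_{a\in\F_q}|F(a)|^2 = q|B|-|T_B|^2 \le q|B|$.

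Next I would exploit the vanishing of the mean $\sum_{a\in\F_q}F(a)=\sum_{b\in B}\sum_{a\in\F_q}\chi(ab+\lambda)=0$ (since each inner sum over a complete additive coset is zero for nontrivial $\chi$ and $b\in\F_q^*$). This lets me rewrite
\[
S \;=\; \sum_{a\in\F_q}\mathbf{1}_A(a)\,F(a) \;=\; \sum_{a\in\F_q}\bigl(\mathbf{1}_A(a)-|A|/q\bigr)F(a),
\]
and Cauchy--Schwarz yields
\[
|S|^2 \;\le\; \Bigl(\sum_{a\in\F_q}\bigl(\mathbf{1}_A(a)-|A|/q\bigr)^{\!2}\Bigr)\Bigl(\sum_{a\in\F_q}|F(a)|^2\Bigr) \;\le\; |A|\bigl(1-|A|/q\bigr)\cdot q|B|.
\]
Taking the square root gives $|S|\le\sqrt{q|A||B|}\,(1-|A|/q)^{1/2}$.

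Finally, by swapping the roles of $A$ and $B$ and repeating the argument with $G(b)=\sum_{a\in A}\chi(ab+\lambda)$ in place of $F$, I get the symmetric bound $|S|\le\sqrt{q|A||B|}\,(1-|B|/q)^{1/2}$. Taking the better of the two bounds produces the factor $(1-\max\{|A|,|B|\}/q)^{1/2}$, as claimed. The only non-routine step is the M\"obius-transformation evaluation of the inner character sum for $b_1\neq b_2$; everything else is bookkeeping, and no Weil-type bound is needed for this symmetric version.
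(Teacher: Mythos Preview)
Your proof is correct and takes a genuinely different route from the paper's. The paper factors $\overline{\chi(ab+\lambda)}=\overline{\chi(b)}\,\overline{\chi(a+\lambda b^{-1})}$, inserts the Gauss-sum representation of $\overline{\chi}$, and applies Cauchy--Schwarz over the Gauss-sum variable $c\in\F_q^*$; the sharpening factor $(1-|A|/q)^{1/2}$ then comes from the identity $\sum_{c\in\F_q^*}\bigl|\sum_{a\in A}e_p(\Tr(ac))\bigr|^2=q|A|-|A|^2$. You instead compute the second moment $\sum_{a\in\F_q}|F(a)|^2$ directly, evaluating the off-diagonal inner sums via the M\"obius substitution $w=(ab_1+\lambda)/(ab_2+\lambda)$, and you obtain the sharpening from the mean-zero identity $\sum_{a\in\F_q}F(a)=0$ together with subtracting the constant $|A|/q$ before Cauchy--Schwarz. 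Your approach is more elementary and entirely self-contained (no Gauss sums or additive characters needed); the paper's approach has the virtue of plugging into the standard Fourier/Gauss-sum machinery and yielding a reusable lemma that also handles twisted exponential sums. Both reach the same exact identity $\sum_{a\in\F_q}|F(a)|^2=q|B|-|T_B|^2$, just via different decompositions.
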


Before proving \cref{charsum:sym}, we need some preliminary estimates. Let $\chi$ be a multiplicative character of $\F_q$; then the Gauss sum associated to $\chi$ is defined to be $$G(\chi)=\sum_{ c \in \F_q} \chi(c) e_p\big(\Tr_{\F_q}(c)\big),$$
where $\Tr_{\F_q}:\F_q \to \F_p$ is the absolute trace map.

\begin{lemma}[{\cite[Theorem 5.12]{LN97}}] \label{qf}
Let $\chi$ be a multiplicative character of  $\F_q$. Then for any $a \in \F_q$, 
$$
\overline{\chi(a)}=\frac{1}{G(\chi)} \sum_{ c \in \F_q} \chi(c) e_p\big(\Tr_{\F_q}(ac)\big). 
$$
\end{lemma}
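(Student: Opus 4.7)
The plan is to derive the identity directly from the definition $G(\chi)=\sum_{c\in\F_q}\chi(c)\,e_p(\Tr_{\F_q}(c))$ by a single change of variables, handling the cases $a=0$ and $a\in\F_q^*$ separately.

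First, I would dispose of the case $a=0$: under the standard convention $\chi(0)=0$ one has $\overline{\chi(0)}=0$, while the right-hand side becomes $\tfrac{1}{G(\chi)}\sum_{c\in\F_q}\chi(c)\,e_p(0)=\tfrac{1}{G(\chi)}\sum_{c\in\F_q}\chi(c)=0$, since a non-trivial multiplicative character of $\F_q$ sums to $0$ over $\F_q^*$ by orthogonality and the $c=0$ term also vanishes.

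For $a\in\F_q^*$, I would apply the substitution $c\mapsto a^{-1}c$, which is a bijection of $\F_q$. This rewrites $\sum_{c\in\F_q}\chi(c)\,e_p(\Tr_{\F_q}(ac))$ as $\sum_{c\in\F_q}\chi(a^{-1}c)\,e_p(\Tr_{\F_q}(c))=\chi(a^{-1})\,G(\chi)$, where I have pulled out $\chi(a^{-1})$ using multiplicativity and recognized the remaining sum as $G(\chi)$ itself. Because $|\chi(a)|=1$ for $a\in\F_q^*$, we have $\chi(a^{-1})=\overline{\chi(a)}$, so dividing by $G(\chi)$ yields the claimed formula.

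The only technical point is the legitimacy of dividing by $G(\chi)$, which is guaranteed by the standard identity $|G(\chi)|=\sqrt{q}$ (in particular $G(\chi)\neq 0$) for any non-trivial $\chi$, itself proved from $G(\chi)\overline{G(\chi)}=q$ via the same orthogonality used above. There is essentially no genuine obstacle: the entire argument is a one-line consequence of the change of variables $c\mapsto a^{-1}c$ combined with the multiplicativity of $\chi$, and the only reason to separate the $a=0$ case is the convention used for $\chi(0)$.
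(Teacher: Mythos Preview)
Your argument is correct and is exactly the standard proof of this identity. Note that the paper does not supply its own proof here: the lemma is simply quoted from \cite[Theorem~5.12]{LN97}, so there is nothing to compare your approach against. One small remark: the lemma as stated does not explicitly require $\chi$ to be non-trivial, but your proof (correctly) uses non-triviality both for the orthogonality relation in the $a=0$ case and for $G(\chi)\neq 0$; this is harmless since the paper only ever applies the lemma with non-trivial $\chi$.
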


%By using a standard application of orthogonality relations, we have the following lemma.
%\begin{lemma}\label{twist}
%Let $A \subset \F_q^*$ and let $\chi$ be a multiplicative character of  $\F_q$. Then
%$$
%\sum_{c \in \F_q^*} \bigg|\sum_{a\in A} \chi(a) e_p\big(\Tr_{\F_q}(ac)\big)\bigg|^2
%=q|A|- \bigg|\sum_{a\in A} \chi(a) \bigg|^2.
%$$
%\end{lemma}

% \begin{proof}
% First, observe that
% $$
% \sum_{c \in \F_q^*} \bigg|\sum_{a\in A} \chi(a) e_p\big(\Tr_{\F_q}(ac)\big)\bigg|^2
% =\sum_{c \in \F_q} \bigg|\sum_{a\in A} \chi(a) e_p\big(\Tr_{\F_q}(ac)\big)\bigg|^2 - \bigg|\sum_{a\in A} \chi(a) \bigg|^2.
% $$
% By expanding the product, for each $c \in \F_q$, we have \begin{align*}
% \bigg|\sum_{a\in A} \chi(a) e_p\big(\Tr_{\F_q}(ac)\big)\bigg|^2
% &=\bigg(\sum_{a\in A} \chi(a) e_p\big(\Tr_{\F_q}(ac)\big)\bigg) \bigg(\sum_{a\in A} \chi(a^{-1}) e_p\big(-\Tr_{\F_q}(ac)\big)\bigg)\\
% &=\sum_{a_1,a_2 \in A} \chi(a_1a_2^{-1}) e_p\big(\Tr_{\F_q}(c(a_1-a_2))\big).
% \end{align*}
% Therefore,
% \begin{align*}
% \sum_{c \in \F_q^*} \bigg|\sum_{a\in A} \chi(a) e_p\big(\Tr_{\F_q}(ac)\big)\bigg|^2
% &=\sum_{c \in \F_q} \sum_{a_1,a_2 \in A} \chi(a_1a_2^{-1}) e_p\big(\Tr_{\F_q}(c(a_1-a_2))\big)- \bigg|\sum_{a\in A} \chi(a) \bigg|^2\\ 
% &= q|A|- \bigg|\sum_{a\in A} \chi(a) \bigg|^2+ \sum_{\substack{a_1,a_2 \in A \\a_1 \neq a_2 }} \chi(a_1a_2^{-1})\bigg(\sum_{c \in \F_q} e_p\big(\Tr_{\F_q}(c(a_1-a_2))\big) \bigg).
% \end{align*}
% Note that when $a_1 \neq a_2$, the orthogonality relations imply that 
% $$
% \sum_{c \in \F_q} e_p\big(\Tr_{\F_q}(c(a_1-a_2))\big)=0.
% $$
% The required equality follows.
% \end{proof}

Now we are ready to prove \cref{charsum:sym}.

\begin{proof}[Proof of \textup{\cref{charsum:sym}}]

By \cref{qf}, we can write

\begin{align*}
\sum_{a\in A,\, b\in B}\overline{\chi(ab+\lambda)}
&=\sum_{a\in A,\, b\in B}\overline{\chi(b)} \overline{\chi(a+\lambda b^{-1})}\\
%&\textcolor{red}{=\frac{1}{G(\chi)} \sum_{\substack{a\in A,\, b\in B \\ c \in \F_q}}  \overline{\chi(b)} \chi(c) e_p\big(\Tr((a+\lambda b^{-1})c)\big)}\\
&=\frac{1}{G(\chi)} \sum_{c \in \F_q} \chi(c) \sum_{a\in A,\, b\in B} \overline{\chi(b)} e_p\big(\Tr((a+\lambda b^{-1})c)\big).
\end{align*}

It is well-known that $|G(\chi)|= \sqrt{q}$ (see for example \cite[Theorem 5.11]{LN97}). Since $|\chi(c)|=1$ for each $c \in \F_q^*$, we can apply the triangle inequality and Cauchy-Schwarz inequality to obtain
\begin{align*}
\bigg|\sum_{a\in A,\, b\in B}\chi(ab+\lambda)\bigg| 
& \leq \frac{1}{\sqrt{q}}  \sum_{c \in \F_q^*} \bigg|\sum_{a\in A,\, b\in B} \overline{\chi(b)} e_p\big(\Tr((a+\lambda b^{-1})c)\big)\bigg|\\
%&\textcolor{red}{\leq \frac{1}{\sqrt{q}}  \sum_{c \in \F_q^*} \bigg|\sum_{a\in A} e_p\big(\Tr(ac)\big)\bigg| \bigg|\sum_{b\in B} \overline{\chi(b)}e_p\big(\Tr(\lambda b^{-1}c)\big)\bigg|}      \\
&\leq \frac{1}{\sqrt{q}}  \bigg(\sum_{c \in \F_q^*} \bigg|\sum_{a\in A} e_p\big(\Tr(ac)\big)\bigg|^2\bigg)^{1/2} \bigg(\sum_{c \in \F_q^*}\bigg|\sum_{b\in B} \overline{\chi(b)}e_p\big(\Tr(b^{-1}c)\big)\bigg|^2\bigg)^{1/2}.\end{align*}
By orthogonality relations, we have
$$
\sum_{c \in \F_q^*} \bigg|\sum_{a\in A} e_p\big(\Tr(ac)\big)\bigg|^2=q|A|-|A|^2, \quad
\sum_{c \in \F_q^*}\bigg|\sum_{b\in B} \overline{\chi(b)}e_p\big(\Tr(b^{-1}c)\big)\bigg|^2\leq q|B|.
$$
Thus, we have 
 $$
 \bigg|\sum_{a\in A,\, b\in B}\chi(ab+\lambda)\bigg|  \leq \sqrt{q|A||B|}\bigg(1-\frac{|A|}{q}\bigg)^{1/2}.
 $$
By switching the roles of $A$ and $B$, we obtain the required character sum estimate.
\end{proof}

Let $d \mid (q-1)$ such that $d \ge 2$ and denote $S_{d}=\{x^d \colon x \in \mathbb{F}_{q}^{*}\}$ with order $\frac{q-1}{d}$.
Then we prove \cref{mainprop1}.

\begin{proof} [Proof of \textup{\cref{mainprop1}}]
Let $\chi$ be a multiplicative character of order $d$. 

Let $A \subset \F_q^*$ with property $SD_d(\lambda, \F_q)$, that is, $AA+\lambda \subset S_d \cup \{0\}$. Note that $\chi(ab+\lambda)=1$ for each $a,b \in A$, unless $ab+\lambda=0$. 
Note that given $a \in A$, there is at most one $b \in A$ such that $ab+\lambda=0$. Therefore, by \cref{charsum:sym}, we have
$$
|A|^2-|A|\leq  \bigg|\sum_{a,b\in A}\chi(ab+\lambda)\bigg|\leq \sqrt{q} |A| \bigg(1-\frac{|A|}{q}\bigg)^{1/2}.
$$
It follows that
$$
(|A|-1)^2 \leq q-|A| \implies |A|\leq \frac{\sqrt{4q-3}+1}{2}.
$$

Next we work under the weaker assumption $A \hat{\times} A+\lambda \subset S_d \cup \{0\}$. In this case, note that $\chi(ab+\lambda)=1$ for each $a,b \in A$ such that $a\neq b$, unless $ab+\lambda=0$. \cref{charsum:sym} then implies that
$$
|A|^2-3|A|\leq  \bigg|\sum_{a,b\in A}\chi(ab+\lambda)\bigg|\leq \sqrt{q} |A| \bigg(1-\frac{|A|}{q}\bigg)^{1/2}
$$
and it follows that $|A|\leq \sqrt{q-\frac{11}{4}}+\frac{5}{2}$.
\end{proof}

\subsection{Estimates on \texorpdfstring{$|A|$}{} and \texorpdfstring{$|B|$}{} if \texorpdfstring{$AB=(S_d-\lambda)\setminus \{0\}$}{}}

Let $A,B \subset \F_q^*$ and $\lambda \in \F_q^*$.
In this subsection, we provide several useful estimates on $|A|$ and $|B|$ when $AB=(S_{d}-\lambda) \setminus \{0\}$, which will be used in \cref{sec: Multiplicative decompositions}.

We need to use the following lemma, due to Karatsuba \cite{K91}.

%First, we consider the following proposition, a variant of a double character sum estimate due to Karatsuba \cite{K91} (see also \cite[Lemma 2.2]{S13}). Note that the following bound in the proposition is an asymmetric version of 
%\cref{charsum:sym} and would improve \cref{charsum:sym} when the size of $|A|$ and $|B|$ are far from each other. 
% For the sake of completeness, we present a short proof. 

\begin{lemma}\label{prop:asymmetric}
 Let $A,B \subset \F_q^*$ and $\lambda \in \F_q^*$. Then for any non-trivial multiplicative character $\chi$ of $\F_q$ and any positive integer $\nu$, we have
$$
\sum_{\substack{a\in A\\ b\in B}} \chi(ab+\lambda) \ll_{\nu} |A|^{(2\nu-1)/2\nu} (|B|^{1/2}q^{1/2\nu}+|B|q^{1/4\nu}).
$$   
\end{lemma}

% \begin{proof}
% By H\"older's inequality, we have
% \begin{align*}
% \bigg|\sum_{\substack{a\in A\\ b\in B}} \chi(ab+\lambda) \bigg|
% &=\bigg|\sum_{a \in A} \chi(a) \sum_{b \in B} \chi(b+a^{-1}\lambda)\bigg|\\  
% &\leq \bigg(\sum_{a \in A} |\chi(a)|^{2\nu/(2\nu-1)}\bigg)^{(2\nu-1)/2\nu} \bigg(\sum_{a \in A} \bigg|\sum_{b \in B} \chi(b+a^{-1}\lambda)\bigg|^{2\nu}\bigg)^{1/2\nu}\\
% &\leq |A|^{(2\nu-1)/2\nu} \bigg(\sum_{x \in \F_q} \bigg|\sum_{b \in B} \chi(b+x)\bigg|^{2\nu}\bigg)^{1/2\nu}.
% \end{align*}

% Note that
% \begin{align}\label{eq: summand}
% \sum_{x \in \F_q} \bigg|\sum_{b \in B} \chi(b+x)\bigg|^{2\nu}
% =\sum_{b_1, b_2, \ldots, b_{2\nu} \in B} \bigg(\sum_{x \in \F_q} \chi\bigg(\prod_{j=1}^{\nu} (x+b_j)\bigg) \cdot 
% \overline{\chi}\bigg(\prod_{k=\nu+1}^{2\nu} (x+b_k)\bigg)\bigg),
% \end{align}
% where the first sum on the right-hand side is over $(b_1, b_2, \ldots, b_{2\nu}) \in B^{2\nu}$.
% If there exists $i$ with $b_i \neq b_j$ for each $j \neq i$, then \cref{lem:unique} implies that the second summand on the right-hand side of equation \cref{eq: summand} is $\ll_{\nu} \sqrt{q}$; otherwise the summand is trivially bounded by $q$. Note that in the latter case, each $b_i$ appears at least twice, so the number of such $(b_1, b_2, \ldots, b_{2\nu}) \in B^{2\nu}$ is $\ll_{\nu} |B|^\nu$. Therefore, 
% $$
% \sum_{x \in \F_q} \bigg|\sum_{b \in B} \chi(b+x)\bigg|^{2\nu} \ll_{\nu} |B|^{2\nu} \sqrt{q} + |B|^{\nu} q,
% $$
% and the required estimate follows.
% \end{proof}

The following proposition improves and generalizes \cite[Theorem 1]{S14}. It also improves \cite[Lemma 17]{S20} (see \cref{remark:logp}).

\begin{proposition}\label{thm:minlb}
Let $\epsilon>0$. Let $d \mid (q-1)$ such that $2 \leq d \leq q^{1/2-\epsilon}$ and $\lambda \in \F^*_q$.  If $AB=(S_d-\lambda)\setminus \{0\}$ for some $A,B \subset \F_q^*$ with $|A|, |B| \geq 2$, then
    $$
    \frac{\sqrt{q}}{d} \ll \min \{|A|,|B|\} \leq \max \{|A|,|B|\} \ll q^{1/2}.
    $$    
\end{proposition}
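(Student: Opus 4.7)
The plan is to interpret $|A||B|$ as a double character sum with no cancellation, then attack it via \cref{prop:asymmetric}. Let $\chi$ be a multiplicative character of $\F_q$ of order $d$. Since $AB = (S_d-\lambda) \setminus \{0\}$, every $ab+\lambda$ lies in $S_d$ (noting $-\lambda \notin AB$ because $0 \notin S_d$), so $\chi(ab+\lambda) = 1$ for every $(a,b) \in A \times B$, and thus $\sum_{a,b} \chi(ab+\lambda) = |A||B|$. The companion lower bound $|A||B| \geq |AB| = (q-1)/d - 1$, combined with $d \leq q^{1/2-\epsilon}$, gives $|A||B| \gg q^{1/2+\epsilon}$.

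Rearranging \cref{prop:asymmetric} produces, for each integer $\nu \geq 1$, the asymmetric estimates $|A| \leq c_\nu(q|B|^{-\nu} + q^{1/2})$ and (by symmetry) $|B| \leq c_\nu(q|A|^{-\nu} + q^{1/2})$. Assume without loss of generality that $|A| \geq |B|$; then $|A| \gg q^{1/4+\epsilon/2}$ from $|A|^2 \geq |A||B|$. Taking $\nu = 2$ in the bound for $|B|$ yields $|B| \ll q|A|^{-2} + q^{1/2} \ll q^{1/2}$, the easier half of the upper bound.

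To bound $|A|$ matching $q^{1/2}$ I first establish a lower bound on $|B|$. Multiplying the bound for $|A|$ by $|B|$ and comparing with $|A||B| \gg q^{1/2+\epsilon}$ yields the dichotomy: either $|B|^{\nu-1} \ll_\nu q^{1/2-\epsilon}$, or $|B| \gg_\nu q^{\epsilon}$. A careful accounting of the combinatorial factor in the proof of \cref{prop:asymmetric} shows that $c_\nu \ll \nu^K$ for an absolute constant $K$; so choosing $\nu_0$ of order $\log q$ makes the first case impossible, since it would force $2^{\nu_0-1} \leq |B|^{\nu_0-1} \leq c_{\nu_0} q^{1/2-\epsilon}$, which fails for $q$ large. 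Consequently $|B| \gg q^\epsilon/\mathrm{polylog}(q) \gg q^{\epsilon/2}$ for $q$ large enough.

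With $|B| \gg q^{\epsilon/2}$ in hand, apply the bound for $|A|$ once more with the \emph{fixed} value $\nu_1 = \lceil 2/\epsilon \rceil$: now $|B|^{\nu_1} \gg q^{\epsilon \nu_1/2} \geq q^{1/2}$ and $c_{\nu_1}$ depends only on $\epsilon$, so $|A| \ll_\epsilon q^{1/2}$. This gives $\max\{|A|,|B|\} \ll q^{1/2}$, and $\min\{|A|,|B|\} \geq |A||B|/\max\{|A|,|B|\} \gg q^{1/2}/d$ follows immediately. The main obstacle is the tension between the parameter $\nu$ and the implicit constant $c_\nu$: a single choice of $\nu$ cannot both rule out the pathological small-$|B|$ regime and produce a clean $q^{1/2}$ bound with $q$-independent constants, which is why the two-step choice (growing $\nu \asymp \log q$ to exclude small $|B|$, then fixed $\nu \asymp 1/\epsilon$ for the tight bound) is essential.
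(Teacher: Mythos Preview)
Your overall strategy—identify the double character sum with $|A||B|$, bound it via \cref{prop:asymmetric}, then bootstrap—matches the paper's, and your final step with the fixed $\nu_1 = \lceil 2/\epsilon\rceil$ is correct and identical to the paper's conclusion. The gap is in the preliminary step establishing $|B| \gg q^{\epsilon/2}$. Your claim that the constant $c_\nu$ in the rearranged bound $|A| \leq c_\nu(q|B|^{-\nu}+q^{1/2})$ satisfies $c_\nu \ll \nu^K$ is false. The implied constant in \cref{prop:asymmetric} \emph{itself} is indeed polynomial (of order $\sqrt\nu$, since the count of $2\nu$-tuples with no singleton value is $\ll (e\nu)^\nu|B|^\nu$ and one then takes a $2\nu$-th root). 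But to pass from
\[
|A|^{1/(2\nu)} \;\leq\; C\sqrt\nu\,|B|^{-1/2}q^{1/(2\nu)} + 2\,q^{1/(4\nu)}
\]
to your rearranged form you must raise both sides to the power $2\nu$, which produces $c_\nu \asymp (C'\nu)^\nu$, not $\nu^K$. With $\nu_0 \asymp \log q$ this gives $c_{\nu_0} = q^{\Theta(\log\log q)}$, swamping the $q^{1/2-\epsilon}$ savings; the first branch of your dichotomy is therefore never excluded. In fact \cref{prop:asymmetric} alone cannot rule out $|B|=2$: specializing, one obtains only $|A|\ll_\nu q$ for every $\nu$, never $|A|=o(q/d)$.

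The paper obtains the preliminary bound by a different mechanism. For $d\geq 3$ it applies the direct Weil count \cref{lem:countingsolns}: writing $|B|=k$, the $k$ simultaneous conditions $\chi(a+\lambda b_j^{-1})=\chi(b_j)^{-1}$ force $|A| < q/d^k + k\sqrt q$, and combining with $|A|\,k \geq |S_d|-1$ yields $k^2\sqrt q \gg q/d$, hence $|B| \gg q^{1/4}/\sqrt d \gg q^{\epsilon/2}$. The case $d=2$ is cited to S\'{a}rk\"{o}zy. The point is that the main term $q/d^k$ in \cref{lem:countingsolns} carries no moment parameter whose constant can blow up, which is why it succeeds where the $\nu\to\infty$ bootstrap of \cref{prop:asymmetric} fails.
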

\begin{proof}
Let $A, B \subset \F_q^*$ and $\lambda \in \F_q^*$ such that
$AB=(S_d-\lambda)\setminus \{0\}$ with $|A|, |B|\geq 2$. Without loss of generality, we assume that $|A|\geq |B|$. We first establish a weaker lower bound that $|B|\gg q^{\epsilon/2}$.

When $d=2$, S\'{a}rk\"{o}zy \cite{S14} has shown that $|B|\gg \frac{\sqrt{q}}{3\log q}$. While he only proved this estimate when $q=p$ is a prime \cite[Theorem 1]{S14}, it is clear that the same proof extends to all finite fields $\F_q$. 

Next, assume that $d \geq 3$. Let $B=\{b_1, b_2, \ldots, b_k\}$ and $|B|=k$. Since $AB \subset S_d-\lambda$, we have $AB+\lambda \subset S_d$. Let $\chi$ be a multiplicative character of $\F_q$ with order $d$. Then it follows that for each $a \in A$, we have $\chi(a+\lambda b_i^{-1})=1/\chi(b_i)$ for each $1 \leq i \leq k$. 
Therefore, by a well-known consequence of Weil's bound (see for example \cite[Exercise 5.66]{LN97}),
%Therefore, \cref{lem:countingsolns} implies that 
$$
|A|\leq \frac{q}{d^k}+\bigg(k-1-\frac{k}{d}+\frac{1}{d^k}\bigg)\sqrt{q}+\frac{k}{d}< \frac{q}{d^k}+k\sqrt{q}.
$$
On the other hand, since $AB=(S_d-\lambda)\setminus \{0\}$, we have
$$
|A||B|\geq |AB| \geq |S_d|-1=\frac{q-1}{d}-1.
$$
Combining the above two inequalities, we obtain that
$$
\frac{2q}{d^2}+k^2\sqrt{q}\geq \frac{kq}{d^k}+k^2\sqrt{q}>|A||B| \geq \frac{q-1}{d}-1.
$$
Since $d \geq 3$, it follows that $k^2\sqrt{q}\gg \frac{q}{d}$ and thus
$$
|B|=k \gg \frac{q^{1/4}}{\sqrt{d}}\gg q^{\epsilon/2}.
$$

Let $\nu=\lceil 2/\epsilon \rceil$. By \cref{prop:asymmetric}, and as  $|B| \gg q^{1/\nu}$, we have
$$
|A||B|=\sum_{\substack{a\in A\\ b\in B}} \chi(ab+\lambda) \ll |A|^{(2\nu-1)/2\nu} (|B|^{1/2}q^{1/2\nu}+|B|q^{1/4\nu})\ll  |A|^{(2\nu-1)/2\nu}|B|q^{1/4\nu}.
$$
It follows that $|A|\ll q^{1/2}$. Thus, $|B|\gg |S_d|/|A|\gg q^{1/2}/d$.
\end{proof}

\begin{remark}
We remark that the same method could be used to refine a similar result for the additive decomposition of multiplicative subgroups, which improves a result of Shparlinski \cite[Theorem 6.1]{S13} (moreover, our proof appears to be much simpler than his proof). More precisely, we can prove the following:

Let $\epsilon>0$. Let $d \mid (q-1)$ such that $2 \leq d \leq q^{1/2-\epsilon}$.  If $A+B=S_d$ for some $A,B \subset \F_q$ with $|A|, |B| \geq 2$, then
    $$
        \frac{\sqrt{q}}{d} \ll \min \{|A|,|B|\} \leq \max \{|A|,|B|\} \ll q^{1/2}.
    $$   
\end{remark}

Note that \cref{thm:minlb} only applies to multiplicative subgroups $G=S_d$ with $|G|>\sqrt{q}$. When $q=p$ is a prime, and $G$ is a non-trivial multiplicative subgroup of $\F_p$ (in particular, $|G|<\sqrt{p}$ is allowed), we have the following estimate, due to Shkredov \cite{S20}.

\begin{lemma}\label{1/2+o(1)}
If $G$ is a proper multiplicative subgroup of $\F_p$ such that $AB=(G-\lambda)\setminus \{0\}$ for some $A,B \subset \F_p^*$ with $|A|,|B| \geq 2$ and some $\lambda \in \F_p^*$, then 
$$
|G|^{1/2 + o(1)}= \min\{|A|, |B|\} \leq \max \{|A|, |B|\}=|G|^{1/2 + o(1)}
$$
as $|G| \to \infty$.
\end{lemma}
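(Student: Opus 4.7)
The plan is to establish the upper bound $\max\{|A|,|B|\}\le |G|^{1/2+o(1)}$ and the lower bound $\min\{|A|,|B|\}\ge |G|^{1/2-o(1)}$ separately; together they give the desired conclusion. Note that the trivial inequality $|A||B|\ge|AB|=|G|-1$ (or $|G|$) automatically converts either bound into the other, so the real content is to prevent \emph{both} sides of the decomposition from being very unbalanced.

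For the upper bound on $\max\{|A|,|B|\}$, I would imitate the character sum strategy from the proof of \cref{thm:minlb}. Let $\chi$ be a multiplicative character of $\F_p^*$ of order $d=(p-1)/|G|$ with $\ker\chi=G$. The hypothesis forces $\chi(ab+\lambda)=1$ for every $(a,b)\in A\times B$ with $ab+\lambda\ne 0$, so
$$\Bigl|\sum_{a\in A,\,b\in B}\chi(ab+\lambda)\Bigr|=|A||B|-O\bigl(\max\{|A|,|B|\}\bigr).$$
Combined with the asymmetric estimate \cref{prop:asymmetric}, this already settles the regime $|G|\ge p^{1/2+\varepsilon}$, essentially reproducing \cref{thm:minlb}. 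In the harder small-subgroup regime $|G|\ll\sqrt{p}$, the plan is to invoke Bourgain--Glibichuk--Konyagin style bounds for double character sums over small multiplicative subgroups, which give a saving of the form $|\sum\chi(ab+\lambda)|\ll|A||B|\,|G|^{-c}$ whenever both $|A|$ and $|B|$ exceed $|G|^{\delta}$; matching this against the equality above pins down the admissible sizes.

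The most delicate ingredient, and the main obstacle, is the lower bound on $\min\{|A|,|B|\}$: a priori nothing prevents a skewed decomposition such as $|A|=2$ with $|B|\sim|G|$, since the trivial double character sum bound does not by itself rule this out. Ruling it out requires structural input about shifted multiplicative subgroups. The set $G-\lambda$ has controlled additive energy (by Heath-Brown--Konyagin-style bounds when $|G|<\sqrt{p}$, or a simple $L^2$ computation via Gauss sums when $|G|\ge\sqrt{p}$), whereas any union $\bigcup_{a\in A}aB$ of a small number of multiplicative shifts of a common large set $B$ carries too much additive energy to coincide with $G-\lambda$. Making this incompatibility quantitative, via Shkredov's refined sum-product and subgroup character sum estimates from \cite{S20}, is what produces $\min\{|A|,|B|\}\ge|G|^{1/2-o(1)}$ and completes the proof.
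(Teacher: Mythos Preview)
The paper does not prove this lemma from scratch: it observes that $AB=(G-\lambda)\setminus\{0\}$ can be rewritten as $A'/(B^{-1})=G-\lambda$ with $A'=A$ or $A'=A\cup\{0\}$ (according as $\lambda\notin G$ or $\lambda\in G$), and then quotes \cite[Lemma~17]{S20} verbatim. That lemma of Shkredov already asserts $|A|,|B|=|G|^{1/2+o(1)}$ for any such ratio-set representation of a shifted multiplicative subgroup in $\F_p$. Your proposal, by contrast, sketches an argument that would essentially \emph{re-prove} Shkredov's lemma, invoking \cite{S20} only for auxiliary sum--product and character-sum inputs; this is unnecessary, and as written the sketch is incomplete.

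Two concrete gaps. First, your claim that $|A||B|\ge|AB|\ge|G|-1$ ``automatically converts either bound into the other'' is only half correct: an upper bound $\max\{|A|,|B|\}\le|G|^{1/2+o(1)}$ together with $|A||B|\ge|G|-1$ does force $\min\{|A|,|B|\}\ge|G|^{1/2-o(1)}$, but the reverse implication would require an \emph{upper} bound on $|A||B|$, which you never establish. Second, in the small-subgroup regime $|G|\ll\sqrt p$ you appeal to Bourgain--Glibichuk--Konyagin-type savings that only apply once both $|A|,|B|\ge|G|^\delta$; this is exactly the lower bound you have not yet proved, so the step is circular as presented. Your additive-energy heuristic in the final paragraph is indeed in the spirit of how Shkredov proves \cite[Lemma~17]{S20}, but turning that heuristic into a rigorous bound is the whole content of that lemma---so the clean route is simply to cite it, as the paper does.
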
  

\begin{proof}
If $0 \in G-\lambda$, let $A'=A \cup \{0\}$; otherwise, let $A'=A$. Then we have $A'B=G-\lambda$, or equivalently, $A'/(B^{-1})=G-\lambda$. Note that $|A' \setminus \{0\}|=|A|\geq 2$ and $|B^{-1}|=|B| \geq 2$, thus, the lemma then follows immediately from \cite[Lemma 17]{S20}. 
\end{proof}

\begin{remark}\label{remark:logp}
Let $AB=(S_d-\lambda) \setminus \{0\}$, where  $A,B \subset \F_p^*$ with $|A|, |B| \geq 2$ and $\lambda \in \F_p^*$. When $d$ is a constant, and $p \to \infty$, \cref{thm:minlb} is better than \cref{1/2+o(1)}. Indeed, in \cite[Lemma 17]{S20}, Shkredov showed that $\max\{|A|, |B|\} \ll \sqrt{p}\log p$, while \cref{thm:minlb} showed the stronger result that $\max\{|A|, |B|\} \ll \sqrt{p}$, removing the $\log p$ factor. This stronger bound would be crucial in proving results related to multiplicative decompositions (for example, \cref{thm:almostall}). Instead, \cref{1/2+o(1)} will be useful for applications in ternary decompositions (\cref{thm:ternary}).
\end{remark}

\begin{remark}\label{remark:1/2+o(1)}
\cref{1/2+o(1)} fails to extend to $\F_q$, where $q$ is a proper prime power. Let $q=r^2$ be a square, $G=\F_r^*$, and $\lambda=-1$. Let $A$ be a subset of $\F_r^*$ with size $\lfloor (r-1)/2 \rfloor$. Let $B=\F_r^* \setminus A^{-1}$. Then we have $AB=\F_r^* \setminus \{1\}=(G+1)\setminus \{0\}$ while $|A|, |B| \gg |G|$. 
\end{remark}

\begin{remark}
Let $d \geq 2$ be fixed. Let $q \equiv 1 \pmod d$ be a prime power and let $\lambda \in \F_{q}^*$, we define $N(q,\lambda)$ be the total number of pairs $(A,B)$ of sets $A,B \subseteq \mathbb{F}_{q}$ with $|A|, |B| \geq 2$ such that $AB = (S_{d} - \lambda) \setminus \{0 \}$. Note that \cref{conjecture:MD} implies $N(q,\lambda)=0$ when $q$ is sufficiently large, but it seems out of reach in general. Instead, one can find a non-trivial upper bound of $N(q,\lambda)$. Using \cref{thm:minlb} and following the same strategy of the proof in \cite[Theorem 1]{BKS15},  we have a non-trivial upper bound of $N(q,\lambda)$ as follows:
\[N(q, \lambda) \le \exp \left( O(q^{1/2}) \right).\]

We also note that by using \cref{cor:Sidon}, we confirmed $N(q, \lambda)=0$ if $q$ is a prime, $\lambda \in S_{d}$, and $|S_d|-1$ is a prime. 
In addition, \cref{mainthm5} gives the same result for $N(p,n)$ asymptotically for a subset of primes $p$ with lower density at least $\frac{1}{[\mathbb{Q}(e^{2\pi i/d}, n^{1/d}):\mathbb{Q}]}$.
\end{remark}

\section{Products and restricted products in shifted multiplicative subgroups}\label{sec: Stepanov}

 In this section, we use Stepanov's method to study product sets and restricted product sets that are contained in shifted multiplicative subgroups. Our proofs are inspired by Stepanov's original paper \cite{S69}, and the recent breakthrough of Hanson and Pertidis \cite{HP}, together with its extensions and applications developed by the second author \cite{Y24, Y25}. 

 Throughout the section, we assume $d \geq 2$ and $q \equiv 1 \pmod d$ is a prime power. Recall that $S_d=S_d(\F_q)=\{x^d: x \in \F_q^*\}$.

\subsection{Product set in a shifted multiplicative subgroup}

In this subsection, we prove \cref{thm: stepanovea}, which can be viewed as the bipartite version of Diophantine tuples over finite fields.
As a corollary of \cref{thm: stepanovea}, we prove \cref{cor:Sidon}. 
Besides it, \cref{thm: stepanovea} will be also repeatedly used to prove several of our main results in the present paper.

\begin{proof}[Proof of \textup{\cref{thm: stepanovea}}]

Let $r=|B \cap (-\lambda A^{-1})|$.
Let $A=\{a_1,a_2,\ldots, a_n\}$ and $B=\{b_1,b_2, \ldots, b_m\}$ such that $b_{r+1}, \ldots, b_m \notin (-\lambda A^{-1})$. Since $AB+\lambda \subset S_d \cup \{0\}$, we have $$(a_ib_j+\lambda)^{\frac{q-1}{d}+1}=a_ib_j+\lambda$$ for each $1 \leq i \leq n$ and $1 \leq j \leq m$. This simple observation will be used repeatedly in the following computation.

Let $c_1,c_2,...,c_n \in \F_q$ be the unique solution of the following system of equations:
\begin{equation} \label{system} 
\left\{
\TABbinary\tabbedCenterstack[l]{
\sum_{i=1}^n c_i=1\\\\
\sum_{i=1}^n c_i a_i^j=0,  \quad 1 \leq j \leq n-1
} .\right.    
\end{equation}
This is justified by the invertibility of the coefficient matrix of the system (a Vandermonde matrix). We claim that $\sum_{i=1}^n c_ia_i^n\neq 0$. Suppose otherwise that $\sum_{i=1}^n c_ia_i^n=0$, then $c_i=0$ for all $i$, violating the assumption $\sum_{i=1}^n c_i=1$ in equation~\eqref{system}. Indeed, the generalized Vandermonde matrix $(a_i^j)_{1 \leq i \leq n, 1 \leq j \leq n}$ is non-singular since it has determinant $$a_1 a_2 \ldots a_n \prod_{i<j} (a_j-a_i)\neq 0.$$

Consider the following auxiliary polynomial 

\begin{equation}
\label{auxiliary1}
f(x)=-\lambda^{n-1}+\sum_{i=1}^n c_i (a_ix+\lambda)^{n-1+\frac{q-1}{d}}\in \F_q[x].    
\end{equation}

Note that $n=|A| \leq |S_d \cup \{0\}|\leq \frac{q-1}{d}+1$. Thus, if $q=p$ is a prime, then $n-1+\frac{p-1}{d} \leq \frac{2(p-1)}{d} \leq p-1$ and thus the condition $\binom{n-1+\frac{q-1}{d}}{n} \not \equiv 0 \pmod p$ is automatically satisfied. 
Then $f$ is a non-zero polynomial since the coefficient of $x^n$ in $f$ is 
$$
\binom{n-1+\frac{q-1}{d}}{n} \cdot \lambda^{\frac{q-1}{d}-1} \cdot \sum_{i=1}^n c_i a_i^n \neq 0
$$
by the assumption on the binomial coefficient. Also, it is clear that the degree of $f$ is at most $n-1+\frac{q-1}{d}$.

Next, we compute the derivatives of $f$ on $B$. For each $1\leq j \leq m$, system \cref{system} implies that
\begin{align*}
E^{(0)} f (b_j)%=f (b_j)
%&\textcolor{red}{=  -\lambda^{n-1}+\sum_{i=1}^n c_i (a_ib_j+\lambda)^{n-1+\frac{q-1}{d}}}\\  
=  -\lambda^{n-1}+\sum_{i=1}^n c_i (a_ib_j+\lambda)^{n-1}
=  -\lambda^{n-1}+\sum_{\ell=0}^{n-1} \binom{n-1}{\ell} \lambda^{n-1-\ell}\bigg(\sum_{i=1}^n c_i a_i^\ell\bigg)b_j^{\ell}  %\\  
%&\textcolor{red}{= -\lambda^{n-1}+\binom{n-1}{0} \cdot \lambda^{n-1} \cdot \sum_{i=1}^n c_i }  
=0.
\end{align*}

For each $1\leq j \leq m$ and $1 \leq k \leq n-2$, we have that 
\begin{align*}
E^{(k)} f (b_j)
&= \binom{n-1+\frac{q-1}{d}}{k}  \sum_{i=1}^n c_i a_i^k (a_ib_j+\lambda)^{n-1+\frac{q-1}{d}-k} \\
&= \binom{n-1+\frac{q-1}{d}}{k}  \sum_{i=1}^n c_i a_i^k (a_ib_j+\lambda)^{n-1-k}  \\
&= \binom{n-1+\frac{q-1}{d}}{k}  \sum_{\ell=0}^{n-1-k} \binom{n-1-k}{\ell} \lambda^{n-1-k-\ell} \bigg(\sum_{i=1}^n c_i a_i^{k+\ell}\bigg)b_j^{\ell}   
=0,
\end{align*}
where we use \cref{lem:differentiate} and the assumptions in system \cref{system}.    

For each $r+1\leq j \leq m$, by the assumption, $b_j \notin (-\lambda A^{-1})$, that is, $a_ib_j+\lambda \neq 0$ for each $1 \leq i \leq n$. Thus, for each $r+1\leq j \leq m$, we additionally have
\begin{align*}
E^{(n-1)} f (b_j)
&= \binom{n-1+\frac{q-1}{d}}{n-1}  \sum_{i=1}^n c_i a_i^{n-1}(a_ib_j+\lambda)^{\frac{q-1}{d}} = \binom{n-1+\frac{q-1}{d}}{n-1}  \sum_{i=1}^n c_i a_i^{n-1}=0.
\end{align*}

Therefore, \cref{lem:multiplicity} allows us to conclude that each of $b_1,b_2, \ldots b_r$ is a root of $f$ with multiplicity at least $n-1$, and each of $b_{r+1},b_{r+2}, \ldots b_m$ is a root of $f$ with multiplicity at least $n$. 
It follows that
$$
r(n-1)+(m-r)n= mn-r \leq \operatorname{deg}f \leq \frac{q-1}{d}+n-1.
$$

Finally, assuming that $\lambda \in S_d$. In this case, 
$$
f(0)=-\lambda^{n-1}+\lambda^{n-1+\frac{q-1}{d}}\sum_{i=1}^n c_i=-\lambda^{n-1}+\lambda^{n-1}=0.
$$
And the coefficient of $x^j$ of $f$ is $0$ for each $1 \leq j \leq n-1$ by the assumptions on $c_i$'s. It follows that $0$ is also a root of $f$ with multiplicity $n$. Since $0 \notin B$, we have the stronger estimate that
$mn-r+n\leq \frac{q-1}{d}+n-1.$

\end{proof}
\begin{remark}
More generally, one can study the same question if $AB+\lambda$ is instead contained in a coset of $S_d$. However, note that this more general case can be always reduced to the special case studied in \cref{thm: stepanovea}. Indeed, if $AB+\lambda \subset \xi S_d \cup \{0\}$ with $\xi \in \F_q^*$, then $A'B+\lambda/\xi \subset S_d \cup \{0\}$, where $A'=A/\xi$. 
\end{remark}

Next, we prove \cref{cor:Sidon}, an important corollary of \cref{thm: stepanovea}. It would be crucial for proving results in \cref{sec: Multiplicative decompositions}.

\begin{proof}[Proof of \textup{\cref{cor:Sidon}}]
Since $0 \notin AB+\lambda$, we have $B \cap (-\lambda A^{-1})=\emptyset$, thus \cref{thm: stepanovea} implies that $|A||B| \leq |S_d|-1$. On the other hand, since $(S_d-\lambda) \setminus \{0\}=AB$, it follows that $|A||B| \geq |AB|=|(S_d-\lambda) \setminus \{0\}|=|S_d|-1$. Therefore, we have $|A||B|=|AB|=|S_d|-1$.
\end{proof}

\subsection{Restricted product set in a shifted multiplicative subgroup}

Recall that $A' \subset \F_q^*$ has property $D_d(\lambda, \F_q)$ if and only if $ab+\lambda \in S_d \cup \{0\}$ for each $a,b \in A'$ such that $a \neq b$. In other words, $A' \hat{\times} A'+\lambda \subset S_d \cup \{0\}$. Thus, in this subsection, we are led to study restricted product sets and we establish the following restricted product analog of \cref{thm: stepanovea}. In the next subsection, \cref{thm: stepanovea} and \cref{thm: restricted} will be applied together to prove \cref{mainthm2} in the case that $q$ is a prime and \cref{mainthm3} in the case that $q$ is a square. 

\begin{theorem}\label{thm: restricted}
Let $d \geq 2$ and let $q \equiv 1 \pmod d$ be a prime power. Let $A' \subset \F_q^*$ and $\lambda \in \F_q^*$. If $A' \hat{\times} A'+\lambda \subset S_d \cup \{0\}$ while $A'A'+\lambda \not \subset S_d \cup \{0\}$, then $|A'|\leq \sqrt{2(q-1)/d}+4$. 
\end{theorem}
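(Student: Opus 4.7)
The plan is to reduce to \cref{thm: stepanovea} by introducing a controlled bipartite pair inside $A'$. Let $n = |A'|$ and define the \emph{bad set}
\[
T = \{a \in A' : a^2 + \lambda \notin S_d \cup \{0\}\},
\]
which is non-empty by the hypothesis $A'A' + \lambda \not\subset S_d \cup \{0\}$; write $t = |T|$.

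The key observation is that the bipartite pair $A = A'$ and $B = A' \setminus T$ satisfies $AB + \lambda \subset S_d \cup \{0\}$: for $a \in A, b \in B$, either $a = b \in A \cap B \subset A' \setminus T$, which gives $b^2 + \lambda \in S_d \cup \{0\}$ by definition of $T$, or $a \neq b$, which falls under the restricted hypothesis $A' \hat{\times} A' + \lambda \subset S_d \cup \{0\}$. Assuming $|A' \setminus T| \geq 2$ (small cases can be disposed of trivially), \cref{thm: stepanovea} yields $|A||B| \leq (q-1)/d + |A| + |B| - 1$, which rearranges into the inequality
\begin{equation*}
(n-1)(n-t-1) \leq \frac{q-1}{d}. \tag{$\ast$}
\end{equation*}

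The next step is a case analysis on the relative sizes of $T$ and $A' \setminus T$. When $t \leq n/2$, ($\ast$) directly gives $(n-1)(n/2 - 1) \leq (q-1)/d$, which after completing the square produces $n \leq \sqrt{2(q-1)/d} + O(1)$. In the complementary regime $t > n/2$, the set $T$ itself inherits the restricted hypothesis of the theorem: $T \hat{\times} T + \lambda \subset S_d \cup \{0\}$, and every element of $T$ is ``bad'' by construction. I would then bound $t$ separately by applying \cref{thm: stepanovea} to a balanced disjoint partition $T = T_1 \sqcup T_2$ (with $|T_1|, |T_2| \approx t/2$), for which $T_1 T_2 + \lambda \subset S_d \cup \{0\}$ since the parts are disjoint; this gives a control on $|T|$ that, together with ($\ast$), yields the final bound $n \leq \sqrt{2(q-1)/d} + 4$, with the constant $4$ absorbing lower-order terms and the small edge cases ($|A' \setminus T| \in \{0, 1\}$) where the main bipartite argument does not apply directly.

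The main obstacle will be achieving the sharp leading coefficient $\sqrt{2}$ in the regime $t > n/2$, since a naive combination of balanced-split estimates on $T$ with ($\ast$) only produces a bound of the weaker form $2\sqrt{(q-1)/d} + O(1)$. Closing this gap should require a more delicate application of \cref{thm: stepanovea}---for instance, using a non-disjoint bipartite split of the form $A = (A' \setminus T) \cup T_1$, $B = (A' \setminus T) \cup T_2$ (with $T_1 \sqcup T_2 = T$) whose overlap $A \cap B = A' \setminus T$ is consistent with the restricted hypothesis---or possibly a direct Stepanov-type polynomial argument tailored to this restricted product setup, in the spirit of the construction used to prove \cref{thm: stepanovea} itself.
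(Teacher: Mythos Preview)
Your reduction to \cref{thm: stepanovea} is clean when $t \le n/2$, but the regime $t > n/2$ is a genuine gap, and neither of your suggested repairs closes it. The non-disjoint split $A=(A'\setminus T)\cup T_1$, $B=(A'\setminus T)\cup T_2$ with $|T_1|\approx|T_2|\approx t/2$ gives $(n-t/2-1)^2\le (q-1)/d$, i.e.\ $n\le\sqrt{(q-1)/d}+t/2+1$; feeding in the balanced-split bound $t\le 2\sqrt{(q-1)/d}+O(1)$ still only yields $n\le 2\sqrt{(q-1)/d}+O(1)$. In fact this is unavoidable from \cref{thm: stepanovea} alone: in the extreme case $t=n$ (every element bad), any pair $A,B\subset A'$ with $AB+\lambda\subset S_d\cup\{0\}$ must be \emph{disjoint}, so $|A||B|\le n^2/4$, and the inequality $|A||B|\le (q-1)/d+O(n)$ can never force $n$ below $2\sqrt{(q-1)/d}+O(1)$. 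The paper explicitly records this limitation in \cref{remark:weakerbound}.

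The paper's proof is precisely your second suggestion, carried out in full: a tailored Stepanov polynomial
\[
f(x)=\sum_{i=1}^{n} c_i\,(a_i x+\lambda)^{m+(q-1)/d}\,(a_i^{-1}x-1)^m,\qquad m=\tfrac{n}{2}-1,
\]
where the new factor $(a_i^{-1}x-1)^m$ is the key device: it manufactures an order-$m$ zero at $x=a_i$ regardless of whether $a_i^2+\lambda$ lies in $S_d\cup\{0\}$, so \emph{all} $n$ elements of $A'$ (after a parity adjustment) contribute roots of multiplicity $\ge m$. The $c_i$ solve a two-sided Vandermonde system in exponents $-m,\dots,m+1$, and nonvanishing of $f$ is certified not by a leading-coefficient binomial but by computing $E^{(m)}f(b)\ne 0$ at one chosen bad element $b\in A'$. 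Comparing $mn$ with $\deg f\le (q-1)/d+2m$ gives $(n-2)^2\le 2(q-1)/d$ directly. As a secondary point, even your $t\le n/2$ argument silently drops the hypothesis $\binom{|A|-1+(q-1)/d}{|A|}\not\equiv 0\pmod p$ that \cref{thm: stepanovea} requires when $q$ is not prime; the paper's direct construction sidesteps that condition entirely, which is why \cref{thm: restricted} holds for all prime powers.
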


The proof of \cref{thm: restricted} is similar to \cref{thm: stepanovea}, but it is more delicate. In particular, the choice of the auxiliary polynomial \cref{auxiliary2} needs to be modified from that of the proof \cref{auxiliary1} of \cref{thm: stepanovea}. In view of \cref{thm: stepanovea}, we can further assume that $A'A'+\lambda \not \subset S_d \cup \{0\}$, for otherwise we already have a good bound on $|A'|$; we refer to \cref{Subsec: applications} for details. It turns out that this additional assumption (which we get for free) is crucial in our proof since it guarantees that the auxiliary polynomial we constructed is not identically zero. 
\begin{proof}[Proof of \textup{\cref{thm: restricted}}]
Since $A'A'+\lambda \not \subset S_d \cup \{0\}$, there is $b \in A'$ such that  $b^2+\lambda \not \subset S_d \cup \{0\}$. Let $A''=A' \setminus \{-\lambda/b\}$. If $|A''|=1$, then we are done. Otherwise, if $|A''|$ is even, let $A=A''$; if $|A''|$ is odd, let $A=A'' \setminus \{b'\}$, where $b' \in A''$ is an arbitrary element such that $b' \neq b$. Then we have $b \in A$ and $|A|$ is even. Note that $|A| \geq |A'|-2$, thus it suffices to show $|A|\leq \sqrt{2(q-1)/d}+2$.

Let $|A|=n$, where $n$ is even. Write $A=\{a_1, a_2, \ldots, a_n\}$. Without loss of generality, we may assume that $a_1=b$. Let $m=n/2-1$. 
Let $c_1,c_2,...,c_n \in \F_q$ be the unique solution of the following system of equations:
\begin{equation} \label{system2} 
\left\{
\TABbinary\tabbedCenterstack[l]{
\sum_{i=1}^n c_i a_i^j=0, \quad -m \leq j \leq m\\\\
\sum_{i=1}^n c_i a_i^{m+1}=1.
}\right.    
\end{equation}
Indeed, that coefficient matrix of the system is the generalized Vandermonde matrix $(a_i^j)_{1 \leq i \leq n, -m \leq j \leq m+1},$
which is non-singular since it has nonzero determinant $(a_1 a_2 \ldots a_n)^{-m} \prod_{i<j} (a_j-a_i)\neq 0.$
Note that $c_1 \neq 0$; for otherwise $c_1=0$ and we must have $c_1=c_2=\ldots=c_n=0$ in view of the first $n-1$ equations in system \cref{system2}, which contradicts the last equation in system \cref{system2}.

Consider the following auxiliary polynomial 
\begin{equation}
\label{auxiliary2}
f(x)=\sum_{i=1}^n c_i (a_ix+\lambda)^{m+\frac{q-1}{d}} (a_i^{-1}x-1)^m\in \F_q[x].
\end{equation}
It is clear that the degree of $f$ is at most $2m+\frac{q-1}{d}$. Since $A \hat{\times} A+\lambda \subset S_d \cup \{0\}$, we have $$(a_ia_j+\lambda)^{\frac{q-1}{d}+1} (a_i^{-1} a_j-1)=(a_ia_j+\lambda)(a_i^{-1} a_j-1)$$ 
for each $1 \leq i,j \leq n$. 
This simple observation will be used repeatedly in the following computation.

First, we claim that for each $0 \leq k_1<m$, $0\leq k_2<m$, and $1 \leq j \leq n$, we have
\begin{equation}\label{eq:k1k2}
\sum_{i=1}^n c_i E^{(k_1)}[(a_ix+\lambda)^{m+\frac{q-1}{d}}](a_j) \cdot E^{(k_2)}[(a_i^{-1}x-1)^m](a_j)=0.
\end{equation}
Indeed, by \cref{lem:differentiate}, we have
\begin{align*}
&\sum_{i=1}^n c_i E^{(k_1)}[(a_ix+\lambda)^{m+\frac{q-1}{d}}](a_j) \cdot E^{(k_2)}[(a_i^{-1}x-1)^m](a_j)\\
%& \textcolor{red}{=  \binom{m+\frac{q-1}{d}}{k_1} \binom{m}{k_2} \bigg(\sum_{i=1}^n c_i a_i^{k_1} (a_ja_i+\lambda)^{m-k_1+\frac{q-1}{d}} a_i^{-k_2}(a_i^{-1}a_j-1)^{m-k_2}\bigg)} \\
&= \binom{m+\frac{q-1}{d}}{k_1} \binom{m}{k_2} \bigg(\sum_{i=1}^n c_i a_i^{k_1-k_2} (a_ja_i+\lambda)^{m-k_1} (a_i^{-1}a_j-1)^{m-k_2}\bigg) \\
&= \binom{m+\frac{q-1}{d}}{k_1} \binom{m}{k_2} \sum_{\ell_1=0}^{m-k_1} \sum_{\ell_2=0}^{m-k_2}  \binom{m-k_1}{\ell_1} \binom{m-k_2}{\ell_2}
\bigg(\sum_{i=1}^n c_i a_i^{k_1-k_2} (a_ja_i)^{\ell_1}  \lambda^{m-k_1-\ell_1} (a_i^{-1}a_j)^{\ell_2}(-1)^{m-k_2-\ell_2}\bigg)\\
&= \binom{m+\frac{q-1}{d}}{k_1} \binom{m}{k_2} \sum_{\ell_1=0}^{m-k_1} \sum_{\ell_2=0}^{m-k_2}  \binom{m-k_1}{\ell_1} \binom{m-k_2}{\ell_2} a_j^{\ell_1+\ell_2} \lambda^{m-k_1-\ell_1} (-1)^{m-k_2-\ell_2}
\bigg(\sum_{i=1}^n c_i a_i^{(k_1+\ell_1)-(k_2+\ell_2)} \bigg).
\end{align*}
Note that in the exponent of the last summand, we always have $0 \leq k_1+\ell_1 \leq m$ and $0 \leq k_2+\ell_2 \leq m$ so that $-m \leq (k_1+\ell_1)-(k_2+\ell_2) \leq m$, and thus 
$$
\sum_{i=1}^n c_i a_i^{(k_1+\ell_1)-(k_2+\ell_2)}=0
$$
by the assumptions in system \cref{system2}. This proves the claim. 

Then, for each $1\leq j \leq n$ and $0 \leq r \leq m-1$, we apply \cref{Leibniz} and equation \cref{eq:k1k2} in the above claim to obtain that 
\begin{align*}
E^{(r)} f (a_j)
=  \sum_{i=1}^n c_i \bigg(\sum_{k=0}^r E^{(k)}[(a_ix+\lambda)^{m+\frac{q-1}{d}}](a_j) \cdot E^{(r-k)}[(a_i^{-1}x-1)^m](a_j)\bigg)=0.
\end{align*}

Similarly, using \cref{Leibniz}, \cref{lem:differentiate},  system \cref{system2}, and equation \cref{eq:k1k2}, we can compute
\begin{align*}
E^{(m)} f (a_1)
&=  \sum_{i=1}^n c_i \bigg(\sum_{k=0}^m E^{(k)}[(a_ix+\lambda)^{m+\frac{q-1}{d}}](a_1) \cdot E^{(m-k)}[(a_i^{-1}x-1)^m](a_1)\bigg)\\
&= \sum_{i=1}^n c_i \bigg(E^{(0)}[(a_ix+\lambda)^{m+\frac{q-1}{d}}](a_1) \cdot E^{(m)}[(a_i^{-1}x-1)^m](a_1)\bigg) \\
& \quad + \sum_{i=1}^n c_i \bigg(E^{(m)}[(a_ix+\lambda)^{m+\frac{q-1}{d}}](a_1) \cdot E^{(0)}[(a_i^{-1}x-1)^m](a_1)\bigg)\\
&=\sum_{i=1}^n c_i (a_1a_i+\lambda)^{m+\frac{q-1}{d}} a_i^{-m}+ \binom{m+\frac{q-1}{d}}{m}  \bigg(\sum_{i=1}^n c_i a_i^m (a_1a_i+\lambda)^{\frac{q-1}{d}} (a_i^{-1}a_1-1)^{m}\bigg).
\end{align*}
Since $a_1a_i+\lambda \neq 0$ for each $1 \leq i \leq n$, we have
$(a_1a_i+\lambda)^{\frac{q-1}{d}}=1$ for $i>1$, and thus
$$
(a_1a_i+\lambda)^{\frac{q-1}{d}} (a_i^{-1}a_1-1)=a_i^{-1}a_1-1
$$
for all $i$. Since $a_1^2+\lambda \notin S_d \cup \{0\}$, we have
$$
(a_1^2+\lambda)^m \bigg((a_1^2+\lambda)^{\frac{q-1}{d}}-1\bigg) \neq 0.
$$
Putting these altogether into the computation of $E^{(m)} f (a_1)$, we have
\begin{align*}
E^{(m)} f (a_1)
%&\textcolor{red}{=\sum_{i=1}^n c_i (a_1a_i+\lambda)^{m+\frac{q-1}{d}} a_i^{-m}+ \binom{m+\frac{q-1}{d}}{m}  \bigg(\sum_{i=1}^n c_i a_i^m (a_1a_i+\lambda)^{\frac{q-1}{d}} (a_i^{-1}a_1-1)^{m}\bigg)}\\
&=\sum_{i=1}^n c_i (a_1a_i+\lambda)^{m+\frac{q-1}{d}} a_i^{-m}
+\binom{m+\frac{q-1}{d}}{m}  \sum_{i=1}^n c_i a_i^m (a_i^{-1}a_1-1)^{m}\\
%&=\textcolor{red}{c_1 a_1^{-m} (a_1^2+\lambda)^{m+\frac{q-1}{d}}  +\sum_{i=2}^n c_i a_i^{-m}(a_1a_i+\lambda)^{m} +\binom{m+\frac{q-1}{d}}{m}  \sum_{i=1}^n c_i (a_1-a_i)^{m}}\\
&=c_1 a_1^{-m} \bigg((a_1^2+\lambda)^{m+\frac{q-1}{d}}-(a_1^2+\lambda)^m\bigg)+\sum_{i=1}^n c_i (a_1a_i+\lambda)^{m} a_i^{-m} \\
&+ \binom{m+\frac{q-1}{d}}{m} \sum_{k=0}^m \binom{m}{k} a_1^{m-k}(-1)^k  \bigg(\sum_{i=1}^n c_i a_i^k\bigg)\\
&=c_1 a_1^{-m} (a_1^2+\lambda)^m \bigg((a_1^2+\lambda)^{\frac{q-1}{d}}-1\bigg)
+ \sum_{k=0}^m \binom{m}{k} a_1^{k}\lambda^{m-k} \bigg(\sum_{i=1}^n c_i a_i^{k-m}\bigg)
\\
&=c_1 a_1^{-m} (a_1^2+\lambda)^m \bigg((a_1^2+\lambda)^{\frac{q-1}{d}}-1\bigg)
\neq 0,
\end{align*}
where we used the fact $c_1 \neq 0$. In particular, $f$ is not identically zero. 

In conclusion, $f$ is a non-zero polynomial with degree at most $\frac{q-1}{d}+2m$, and \cref{lem:multiplicity} implies that each of $a_1,a_2, \ldots a_n$ is a root of $f$ with multiplicity at least $m$. Recall that $m=n/2-1$.
It follows that
$$
\frac{n(n-2)}{2} =mn  \leq \operatorname{deg}f \leq \frac{q-1}{d}+2m= \frac{q-1}{d}+n-2,
$$
that is, we have $(n-2)^2 \leq \frac{2(q-1)}{d}.$
Therefore, $n \leq \sqrt{2(q-1)/d}+2$. This finishes the proof. \end{proof}

\subsection{Applications to generalized Diophantine tuples over finite fields}
\label{Subsec: applications}

In this subsection, we illustrate how to apply \cref{thm: stepanovea} and \cref{thm: restricted} for obtaining improved upper bounds on the size of a generalized Diophantine tuple or a strong generalized Diophantine tuple over $\mathbb{F}_{q}$, when $q=p$ is a prime and $q$ is a square.

\begin{proof}[Proof of \textup{\cref{mainthm2}}]
(1) Let $A \subset \F_p^*$ with property $SD_d(\lambda, \F_p)$, that is,  $AA+\lambda \subset S_d \cup \{0\}$.
\cref{thm: stepanovea} implies that 
$$
|A|^2 \leq |S_d|+|A \cap (-\lambda A^{-1})|+|A|-1 \leq |S_d|+2|A|-1.
$$
It follows that $(|A|-1)^2 \leq |S_d|$. If $\lambda \in S_d$, we have a stronger upper bound:
$$|A|^2 \leq |S_d|+|A \cap (-\lambda A^{-1})|-1 \leq |S_d|+|A|-1.$$  
It follows that $(|A|-\frac{1}{2})^2 \leq |S_d|-\frac{3}{4}$.

(2) Let $A \subset \F_p^*$ with property $D_d(\lambda, \F_p)$, that is, $A \hat{\times} A+\lambda \subset S_d \cup \{0\}$. If $AA+\lambda \subset S_d \cup \{0\}$, then (1) implies that $|A|\leq \sqrt{p/d}+1$ and we are done. If $AA+\lambda \not \subset S_d \cup \{0\}$, then \cref{thm: restricted} implies the required upper bound.
\end{proof}

\begin{remark}\label{remark:weakerbound} 
\cref{thm: stepanovea} can be used to deduce a weaker upper bound of the form $2\sqrt{p/d}+O(1)$ for \cref{mainthm2} (2). Let $A \subset \F_p^*$ such that $A\hat{\times} A+\lambda \subset S_d \cup \{0\}$. We can write $A=B \sqcup C$ such that $|B|$ and $|C|$ differ by at most $1$. Note that since $B$ and $C$ are disjoint, we have $BC+\lambda \subset A \hat{\times} A+\lambda \subset S_d \cup \{0\}$ and thus \cref{thm: stepanovea} implies that $|B||C| \leq p/d+|B|+|C|$, which further implies that $|A| \leq 2\sqrt{p/d}+O(1)$. Note that such a weaker upper bound is worse than the trivial upper bound from character sums (\cref{mainprop1}) when $d=2,3$, and this is one of our main motivations for establishing the bound $\sqrt{2p/d}+O(1)$ in \cref{mainthm2} (2).
\end{remark}

Next, we consider the case $q$ is a square. First we establish a non-trivial upper bound on $MSD_d(\lambda, \F_q)$ and $MD_d(\lambda,\F_q)$ under some minor assumption. While these new bounds only improve the trivial upper bound from character sums (\cref{mainprop1}) slightly, we will see these new bounds are sometimes sharp in the proof of \cref{mainthm3}. To achieve our goal, we need the following special case of Kummer's theorem \cite{K52}.

\begin{lemma}\label{thm: Kummer}
Let $p$ be a prime and $m,n$ be positive integers. 
If there is no carry between the addition of $m$ and $n$ in base-$p$, then $\binom{m+n}{n}$ is not divisible by $p$.
\end{lemma}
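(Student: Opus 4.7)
The plan is to derive the claim from Legendre's classical formula for the $p$-adic valuation of a factorial, namely
\[
v_p(N!) = \frac{N - s_p(N)}{p-1},
\]
where $s_p(N)$ denotes the sum of the base-$p$ digits of $N$. First I would apply this formula to the three factorials in $\binom{m+n}{n} = \frac{(m+n)!}{m!\,n!}$, which yields
\[
v_p\!\left(\binom{m+n}{n}\right) = \frac{s_p(m) + s_p(n) - s_p(m+n)}{p-1}.
\]
Second, I would analyze the digit sum $s_p(m+n)$ under the hypothesis. If the base-$p$ digits of $m$ and $n$ are $(m_i)$ and $(n_i)$, then the assumption that no carries occur means $m_i + n_i < p$ for every $i$, so the digits of $m+n$ are simply $m_i + n_i$. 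Consequently $s_p(m+n) = s_p(m) + s_p(n)$, which forces the numerator above to vanish and hence $v_p\!\binom{m+n}{n} = 0$, i.e.\ $p \nmid \binom{m+n}{n}$.

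As an alternative, one can instead invoke Lucas' theorem, which gives the congruence
\[
\binom{m+n}{n} \equiv \prod_{i} \binom{m_i + n_i}{n_i} \pmod{p},
\]
valid because the no-carry hypothesis makes $(m+n)_i = m_i + n_i$. Each factor on the right is a binomial coefficient $\binom{a}{b}$ with $0 \le b \le a < p$, hence nonzero modulo $p$, and the product is therefore nonzero modulo $p$. Either route is essentially a one-line verification once the right bookkeeping tool (Legendre's formula or Lucas' theorem) is in hand, so there is no genuine obstacle; the only point that requires care is the translation between the combinatorial no-carry hypothesis and the additivity of the digit-sum function $s_p$, which is exactly where Legendre's formula does its work.
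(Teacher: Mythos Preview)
Your proof is correct. The paper does not supply its own argument for this lemma; it simply records the statement as a special case of Kummer's theorem with a citation to \cite{K52}. Your derivation via Legendre's digit-sum formula (or equivalently Lucas' theorem) is the standard way to see this and fills in what the paper leaves as a reference.
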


\begin{theorem}\label{thm: square}
Let $q$ be a prime power and a square, and let $\lambda \in \F_q^*$. 
\begin{enumerate}
\item[\textup{(1)}] Let $d \geq 2$ be a divisor of $(q-1)$. Let $r$ be the remainder of $\frac{q-1}{d}$ divided by $p\sqrt{q}$. If $r\leq (p-1)\sqrt{q}$, then $MSD_d(\lambda, \F_q)\leq \sqrt{q}-1$.
\item[\textup{(2)}] Let $q \geq 25$ and let $d \geq 3$ be a divisor of $(q-1)$.    
Let $r$ be the remainder of $\frac{q-1}{d}$ divided by $p\sqrt{q}$. If $r\leq (p-1)\sqrt{q}$, then $MD_d(\lambda, \F_q)\leq \sqrt{q}-1$.
\end{enumerate}
\end{theorem}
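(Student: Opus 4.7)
The plan is to rule out the extremal case $|A| = \sqrt{q}$ (the maximum value allowed by Proposition~\ref{mainprop1}) by applying Theorem~\ref{thm: stepanovea} with $B = A$. The key technical step is to verify the binomial coefficient condition $\binom{\sqrt{q}-1+(q-1)/d}{\sqrt{q}} \not\equiv 0 \pmod p$ required by the theorem; this is precisely what the hypothesis $r \leq (p-1)\sqrt{q}$ is designed to guarantee, via Lemma~\ref{thm: Kummer} (equivalently Lucas's theorem).

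First I would verify the binomial condition. Write $q = p^{2s}$, so $\sqrt{q} = p^s$ has base-$p$ expansion consisting of a single digit $1$ at position $s$. By Lucas, the non-vanishing reduces to the statement that the digit at position $s$ of $\sqrt{q} - 1 + (q-1)/d$ is at least $1$. Writing $(q-1)/d = K p^{s+1} + r$ and using that $\gcd(p, q-1) = 1$ forces the units digit of $(q-1)/d$ (hence of $r$) to be nonzero, a direct base-$p$ addition identifies the digit in question with $\lfloor r/p^s \rfloor + 1$. The hypothesis $r \leq (p-1)\sqrt{q}$ together with this nonzero units digit forces $\lfloor r/p^s \rfloor \leq p-2$, so the digit lies in $\{1, \ldots, p-1\}$ as required.

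For part (1), with $B = A$ and $|A| = \sqrt{q}$, Theorem~\ref{thm: stepanovea} gives
\[
q = |A|^2 \leq \frac{q-1}{d} + |A \cap (-\lambda A^{-1})| + \sqrt{q} - 1 \leq \frac{q-1}{d} + 2\sqrt{q} - 1,
\]
i.e.\ $(\sqrt{q}-1)^2 \leq (q-1)/d$, which is contradictory for all $(d, q)$ with $d \geq 2$ except the edge case $(d, q) = (2, 9)$. That case splits further: when $\lambda \in S_d$, the stronger bound of Theorem~\ref{thm: stepanovea} yields the contradiction directly; when $\lambda \notin S_d$, the element $-\lambda$ is also a non-square, so the involution $a \mapsto -\lambda/a$ is fixed-point-free on $A$, forcing $|A|$ to be even and thus contradicting $|A| = 3$.

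For part (2), having reduced via Proposition~\ref{mainprop1} to $|A| \in \{\sqrt{q}, \sqrt{q}+1, \sqrt{q}+2\}$, I would split on the ``bad'' set $T := \{a \in A : a^2 + \lambda \notin S_d \cup \{0\}\}$. If $T = \emptyset$, then $AA + \lambda \subset S_d \cup \{0\}$ and part (1) applies directly. Otherwise, let $G := A \setminus T$; the key observation is that $A \cdot G + \lambda \subset S_d \cup \{0\}$, since on-diagonal products $g^2 + \lambda$ lie in $S_d \cup \{0\}$ by definition of $G$ and off-diagonal ones by the restricted-product hypothesis on $A$. Applying Theorem~\ref{thm: stepanovea} to the pair $(A, G)$ bounds $|G|$ in terms of $|A|$, and applying it again with the roles of $A$ and $G$ reversed pins down $|A|\cdot|G|$ tightly enough to force a contradiction with $|A| \geq \sqrt{q}$ once $d \geq 3$ and $q \geq 25$; Theorem~\ref{thm: restricted} can be used as a backstop in the portion of the parameter range where it is already sharp enough. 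The main obstacle I anticipate is the final numerical bookkeeping: extending the base-$p$ digit verification to the slightly larger sizes $\sqrt{q}+1, \sqrt{q}+2$ (and to $|G|$ when swapping roles) and tracking the arithmetic inequalities carefully enough to recover the sharp cutoffs $q \geq 25$ and $d \geq 3$ rather than only a weaker range of parameters.
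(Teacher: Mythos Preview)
Your part (1) is essentially the paper's argument: verify the binomial hypothesis via Kummer/Lucas, reduce via Proposition~\ref{mainprop1} to $|A|=\sqrt{q}$, then apply Theorem~\ref{thm: stepanovea} with $A=B$ to get $(|A|-1)^2\le |S_d|$. Your extra treatment of the edge case $(q,d)=(9,2)$ is a nice touch, since the paper's inequality $\sqrt{|S_d|}+1<\sqrt{q}$ is not strict there; your parity argument via the fixed-point-free involution $a\mapsto -\lambda/a$ works once you note (as you implicitly do) that equality in the Stepanov bound forces $A=-\lambda A^{-1}$.

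Your part (2), however, diverges from the paper and has a real gap. The paper's route is much simpler: if $AA+\lambda\not\subset S_d\cup\{0\}$ then Theorem~\ref{thm: restricted} applies \emph{directly} and gives $|A|\le \sqrt{2(q-1)/d}+4$, which is already $\le\sqrt{q}-1$ for $d\ge 3$ and $q\ge 738$; the remaining range $25\le q\le 737$ is dispatched by a finite SageMath check. Your plan instead passes through $G=A\setminus T$ and two applications of Theorem~\ref{thm: stepanovea}, but that theorem's binomial hypothesis is on the size of the \emph{first} set, so you would need $\binom{|A|-1+(q-1)/d}{|A|}\not\equiv 0\pmod p$ for $|A|\in\{\sqrt{q}+1,\sqrt{q}+2\}$ and, when swapping roles, $\binom{|G|-1+(q-1)/d}{|G|}\not\equiv 0\pmod p$ for an a priori uncontrolled $|G|<\sqrt{q}$. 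This is not mere bookkeeping: the assumption $r\le(p-1)\sqrt{q}$ is tailored precisely to the single carry at position~$s$ that occurs when the first set has size $\sqrt{q}$, and says nothing about other sizes. For generic $|G|$ the binomial coefficient can and will vanish modulo $p$, so the second application cannot be justified. You should promote Theorem~\ref{thm: restricted} from ``backstop'' to the main engine of part~(2), which is exactly what the paper does.
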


\begin{proof}
(1) Since $r\leq (p-1)\sqrt{q}$, there is no carry between the addition of $r-1$ and $\sqrt{q}$ in base-$p$. 
Thus, there is no carry between the addition of $\frac{q-1}{d}-1$ and $\sqrt{q}$ in base-$p$. 
It follows from \cref{thm: Kummer} that
$$
\binom{\sqrt{q}-1+\frac{q-1}{d}}{\sqrt{q}}\not \equiv 0 \pmod p.
$$

Let $A \subset \F_q^*$ with property $SD_d(\lambda, \F_q)$ such that $|A|=MSD_d(\lambda, \F_q)$. Note that \cref{mainprop1} implies that $|A|\leq \sqrt{q}$. For the sake of contradiction, assume that $|A|=\sqrt{q}$. Note that $AA+\lambda \subset S_d \cup \{0\}$ and  
$$
\binom{|A|-1+\frac{q-1}{d}}{|A|}=\binom{\sqrt{q}-1+\frac{q-1}{d}}{\sqrt{q}}\not \equiv 0 \pmod p,
$$
it follows from \cref{thm: stepanovea} that 
$$
|A|^2 \leq |S_d|+|A \cap (-\lambda A^{-1})|+|A|-1 \leq |S_d|+2|A|-1,
$$
that is, $|A|\leq \sqrt{|S_d|}+1<\sqrt{q}$, a contradiction. This completes the proof.

(2) Let $A \subset \F_q^*$ with property $D_d(\lambda, \F_q)$ such that $|A|=MD_d(\lambda, \F_q)$. Then $A\hat{\times} A+\lambda \subset S_d \cup \{0\}$. If $AA+\lambda \subset S_d \cup \{0\}$, we just apply (1). Next assume that $AA+\lambda \not \subset S_d \cup \{0\}$, then \cref{thm: restricted} implies that
$$
|A| \leq \sqrt{\frac{2(q-1)}{d}}+4 \leq \sqrt{\frac{2(q-1)}{3}}+4\leq \sqrt{q}
-1,
$$
provided that $q \geq 738$. When $25 \leq q \leq 737$, we have used SageMath to verify the theorem. 
\end{proof}

Now we are ready to prove \cref{mainthm3}, which determines the maximum size of an infinitely family of generalized Diophantine tuples and strong generalized Diophantine tuples over finite fields.

\begin{proof}[Proof of \textup{\cref{mainthm3}}] 
In both cases, the upper bound $\sqrt{q}-1$ follows from \cref{thm: square}. To show that $\sqrt{q}-1$ is a lower bound, we observe that $A=\alpha \F_{\sqrt{q}}^*$ has property $SD_{d}(\lambda,\mathbb{F}_q)$  (and therefore $D_{d}(\lambda,\mathbb{F}_q)$). Indeed, $AA+\lambda=\alpha^2 \F_{\sqrt{q}}^*+\lambda \subset \alpha^2 \F_{\sqrt{q}} \subset S_d \cup \{0\}$ since $\alpha^2 \in S_d$ and $\F_{\sqrt{q}}^* \subset S_d$ (from the assumption $d \mid (\sqrt{q}+1)$).
\end{proof}

\begin{remark}
Our SageMath code indicates that the last statement of \cref{mainthm3} does not hold when $d=2$ and $q=9,25,49$, when $d=3$ and $q=4,16$, and when $d=4$ and $q=9$. We  conjecture the same statement holds for $d=2$, provided that $q$ is sufficiently large.
\end{remark}

So far we have only considered special cases of applying \cref{thm: stepanovea}. In general, to apply \cref{thm: stepanovea}, the assumption on the binomial coefficient in the statement of \cref{thm: stepanovea} might be tricky to analyze. However, if the base-$p$ representation of $\frac{q-1}{d}$ behaves ``nicely" (for example, if the order of $p$ modulo $d$ is small, then the base-$p$ representation is periodic with a small period), then it is still convenient to apply \cref{thm: stepanovea}. As a further illustration, we prove the following theorem. Note that the new bound is of the same shape as that in \cref{mainthm2} (2), so it can be viewed as a generalization of \cref{mainthm2} (2) as changing a prime $p$ to an arbitrary power of $p$, provided that $d \mid (p-1)$.  
\begin{theorem}\label{mainthm2.5}
Let $d \geq 2$, and let $q$ be a power of $p$ such that $d \mid (p-1)$. Then $MD_d(\lambda, \F_q) \leq \sqrt{2(q-1)/d}+4$ for any $\lambda \in \F_q^*$.   
\end{theorem}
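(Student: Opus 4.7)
The plan is to mimic the two-case strategy used in the proof of \cref{mainthm2}~(2), splitting on whether $AA+\lambda \subset S_d \cup \{0\}$. In the non-strong case $AA+\lambda \not\subset S_d \cup \{0\}$, \cref{thm: restricted} applies directly and already yields $|A|\leq \sqrt{2(q-1)/d}+4$. So the real task is the strong case $AA+\lambda \subset S_d \cup \{0\}$, where one would like to apply \cref{thm: stepanovea} with $B=A$ to obtain the much stronger bound $|A|\leq \sqrt{(q-1)/d}+1$; the obstruction is that the binomial coefficient $\binom{|A|-1+(q-1)/d}{|A|}$ may vanish modulo $p$.

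The key leverage from the hypothesis $d\mid(p-1)$ is that, writing $q=p^r$, we have $(q-1)/d=\tfrac{p-1}{d}(1+p+\cdots+p^{r-1})$, so every base-$p$ digit of $(q-1)/d$ equals $(p-1)/d$. By Kummer's theorem (\cref{thm: Kummer}), the binomial condition $\binom{s-1+(q-1)/d}{s}\not\equiv 0\pmod{p}$ then reduces to the clean requirement that every base-$p$ digit of $s-1$ is at most $M:=(p-1)(d-1)/d$. I would then replace $A$ by a subset $A'\subseteq A$ of size $s$ satisfying this digit condition, constructed as follows: writing $|A|-1=\sum_i e_i p^i$ in base $p$, set $s-1:=\sum_i \min(e_i,M)\,p^i$. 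Then $s\leq |A|$, so such an $A'$ exists, and every base-$p$ digit of $s-1$ is at most $M$ by construction. An elementary case split on whether $e_i\leq M$ or $e_i>M$ yields $\min(e_i,M)\geq \tfrac{d-1}{d}e_i$, which propagates to the crucial lower bound $ds\geq (d-1)|A|+1$.

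With $A'$ in hand, \cref{thm: stepanovea} applied to $(A',A)$---using $A'A+\lambda\subseteq AA+\lambda\subseteq S_d\cup\{0\}$ and the trivial bound $|A\cap(-\lambda(A')^{-1})|\leq s$---gives $s|A|\leq (q-1)/d+2s-1$. Substituting the lower bound on $s$ (together with $s\leq |A|$ to bound the $2s$ term from above) yields a quadratic inequality of the form $(d-1)|A|^2-(2d-1)|A|\leq q-1-d$, from which $|A|\leq \sqrt{(q-1)/(d-1)}+O(1)$. A short check confirms this is at most $\sqrt{2(q-1)/d}+4$: for $d\geq 3$ one has $(q-1)/(d-1)\leq 2(q-1)/d$ (equivalent to $d\geq 2$), while for $d=2$ the two main terms coincide and the $O(1)$ constant fits comfortably under $+4$.

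The main obstacle will be the construction of $A'$: it must be small enough that the binomial coefficient survives modulo $p$, yet large enough that \cref{thm: stepanovea} produces a bound of the right shape $\sqrt{2(q-1)/d}$. The digit-by-digit truncation $\min(e_i,M)$ achieves both precisely because $d\mid(p-1)$ forces the uniform digit ceiling $M$ on $(q-1)/d$, so truncating each digit of $|A|-1$ to $M$ loses at most a factor $\tfrac{d-1}{d}$ in size while always landing in the admissible set.
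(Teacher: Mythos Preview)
Your proof is correct and follows the same overall strategy as the paper: split on whether $AA+\lambda\subset S_d\cup\{0\}$, invoke \cref{thm: restricted} in the non-strong case, and in the strong case pass to a large subset $A'\subseteq A$ of size $s$ for which the binomial condition $\binom{s-1+(q-1)/d}{s}\not\equiv 0\pmod p$ holds, so that \cref{thm: stepanovea} applies to the pair $(A',A)$.

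The difference is only in how $A'$ is built. The paper splits into two sub-cases according to the leading base-$p$ digit $c_k$ of $|A|-1$: if $c_k\le M:=(d-1)(p-1)/d$ it keeps only the top digit (setting $|A'|-1=c_kp^k$), otherwise it sets every digit equal to $M$; in either case one obtains $|A'|\ge |A|/2$, whence $|A|^2/2\le (q-1)/d+2|A|-1$ and the stated bound. Your digit-by-digit truncation $s-1=\sum_i\min(e_i,M)p^i$ is cleaner (no sub-case split) and gives the sharper ratio $s\ge\frac{d-1}{d}|A|+\frac1d$, leading to $(d-1)|A|^2-(2d-1)|A|\le q-1-d$ and hence $|A|\le\sqrt{(q-1)/(d-1)}+O(1)$, which is strictly stronger than the paper's intermediate bound for every $d\ge 3$ (and coincides with it at $d=2$); both of course fit under $\sqrt{2(q-1)/d}+4$.

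One small point worth making explicit in your write-up: Kummer's theorem applied to $\binom{s-1+(q-1)/d}{s}$ literally concerns carries in $s+\bigl((q-1)/d-1\bigr)$, not in $(s-1)+(q-1)/d$. Your digit condition on $s-1$ does imply what is needed, since each $\min(e_i,M)\le M<p-1$ forces the base-$p$ digits of $s$ to equal those of $s-1$ with only the bottom digit incremented by one, after which the carry-free check goes through; but this deserves a sentence.
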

\begin{proof}
Let $B \subset \F_q^*$ with property $D_d(\lambda, \F_q)$, that is,  $B\hat{ \times} B+\lambda \subset S_d \cup \{0\}$. If $BB+\lambda  \nsubseteq S_d \cup \{0\}$, we are done by \cref{thm: restricted}.
Thus, we may assume that $BB+\lambda \subset S_d \cup \{0\}$. It suffices to show $|B| \leq \sqrt{2(q-1)/d}+4$. To achieve that, we try to find an arbitrary subset $A$ of $B$ such that $\binom{|A|-1+\frac{q-1}{d}}{|A|} \not \equiv 0 \pmod p$ and $|A|$ is as large as possible. With such a subset $A$, we have $AB+\lambda \subset S_d \cup \{0\}$ so that we can apply \cref{thm: stepanovea}. In the rest of the proof, we aim to find such an $A$ with $|A|\geq |B|/2$ so that, from \cref{thm: stepanovea}, we can deduce
$$
\frac{|B|^2}{2} \leq \frac{q-1}{d}+2|B|-1 \implies |B| \leq \sqrt{\frac{2(q-1)}{d}+2}+2<\sqrt{\frac{2(q-1)}{d}}+4.
$$

Write $|B|-1=(c_k, c_{k-1}, \ldots, c_1,c_0)_p$ in base-$p$, that is, $|B|-1=\sum_{i=0}^k c_ip^i$ with $0 \leq c_i \leq p-1$ for each $0 \leq i \leq k$ and $c_k \geq 1$. Next, we construct $A$ according to the size of $c_k$.

\textit{Case 1.} $c_k \leq p-1-\frac{p-1}{d}$. In this case, let $A$ be an arbitrary subset of $B$ with $|A|-1=(c_k,0, \ldots, 0)_p$, that is, $|A|=c_kp^k+1$. It is easy to verify that $\binom{|A|-1+\frac{q-1}{d}}{|A|} \not \equiv 0 \pmod p$ using \cref{thm: Kummer}. Since $|B| \leq (c_k+1)p^k$, it also follows readily that $|A|\geq |B|/2$.

\textit{Case 2.} $c_k > p-1-\frac{p-1}{d}$. In this case, let $A$ be an arbitrary subset of $B$ with $$|A|-1=\bigg(\frac{(d-1)(p-1)}{d},\frac{(d-1)(p-1)}{d}, \ldots, \frac{(d-1)(p-1)}{d}\bigg)_p,$$ that is, $|A|=\frac{(d-1)(p-1)}{d} \cdot \sum_{i=0}^k p^i +1$. Again, it is easy to verify that $\binom{|A|-1+\frac{q-1}{d}}{|A|} \not \equiv 0 \pmod p$ using \cref{thm: Kummer}. Since $d \geq 2$, it follows that $2|A| \geq (p-1)\sum_{i=0}^k p^i +2= p^{k+1}+1>|B|$.
\end{proof}

\begin{remark}\label{mainthm2.5_remark}
Under the same assumption, the proof of \cref{mainthm2.5} can be refined to obtain improved upper bounds on $MSD_d(\lambda, \F_q)$. In particular, if $d,r \geq 2$ are fixed, and $p \equiv 1 \pmod d$ is a prime, then as $p \to \infty$, we can show that $MSD_d(\lambda, \F_{p^{2r-1}}) \leq (1+o(1))\sqrt{p^{2r-1}/d}$ uniformly among $\lambda \in \F_{p^{2r-1}}^{*}$. 
Indeed, if $B \subset \F_q^*$ with property $D_d(\lambda, \F_q)$ with $q=p^{2r-1}$ and $\lambda \in \F_{q}^{*}$, then we can assume without loss of generality that $\sqrt{q/d}<|B|$. Otherwise, we are done. Note that $|B|<\sqrt{q}+O(1)$ by \cref{mainprop1}. Following the notations used in the proof of \cref{mainthm2.5}, we have $\sqrt{p/d}-1\leq c_k \leq \sqrt{p}$ and thus we are always in Case 1, and the same construction of $A$ gives $|A|=(1-o(1))|B|$ as $p \to \infty$.
Thus, \cref{thm: stepanovea} gives $|B| \leq (1+o(1))\sqrt{q/d}$. 
\end{remark}

\section{Improved upper bounds on the largest size of generalized Diophantine tuples over integers}
\label{sec: GDM}

\subsection{Proof of \cref{mainthm1}}\label{subsec: 5.1}
In this subsection, we improve the upper bounds on the largest size of generalized Diophantine tuples with property $D_{k}(n)$. We first recall that for each $n \ge 1$ and $k \ge 2$,
\[M_{k}(n)=\sup \{|A| \colon A \text{ satisfies property }D_{k}(n)\}.\]
For $k \geq 2$, \textcolor{black}{we defined the constant in the introduction} 
\begin{equation}\label{alpha_k} 
\eta_k=\min_{\mathcal{I}} \frac{|\mathcal{I}|}{T_\mathcal{I}^2},
\end{equation}
where the minimum is taken over all nonempty subset $\mathcal{I}$ of 
$$\{1 \leq i \leq k: \gcd(i,k)=1, \gcd(i-1,k)>1\},$$ and
\begin{equation}
\label{D}
    T_{\mathcal{I}}=\sum_{i \in \mathcal{I}} \sqrt{\gcd(i-1,k)}.
\end{equation}    

Here is the proof of our main theorem, \cref{mainthm1}.

\begin{proof}[Proof of \textup{\cref{mainthm1}}]
Let $A=\{a_{1},a_{2},\ldots,a_{m}\}$ be a generalized Diophantine $m$-tuple with property $D_{k}(n)$ and $k\geq 3$. \textcolor{black}{Given the assumption that $\log k=O(\sqrt{\log \log n})$, \cref{prop:effective1} implies that the contribution of $a_i$ with $a_i>n^{\frac{k}{k-2}}$ is $|A\cap (n^{\frac{k}{k-2}},\infty)|=O(\log k \log \log k)$ is negligible. Thus, we can assume that $A \subset [1, n^{\frac{k}{k-2}}]$.} Let $\mathcal{I}$ be a nonempty subset of $\{1 \leq i \leq k: \gcd(i,k)=1, \gcd(i-1,k)>1\}$, such that the ratio $|\mathcal{I}|/T_\mathcal{I}^2$ in equation \cref{alpha_k} is minimized by $\mathcal{I}$. 
In other words, we have $\eta_k=|\mathcal{I}|/T_{\mathcal{I}}^2$, where 
$$
T=T_{\mathcal{I}}=\sum_{i \in \mathcal{I}} \sqrt{\gcd(i-1,k)}.
$$
To apply the Gallagher sieve inequality (\cref{larger}), we set \textcolor{black}{$N=n^{\frac{k}{k-2}}$} and define the set of primes 
$$\mathcal{P}=\{p: p \equiv i \pmod k \text{ for some }i \in \mathcal{I}\} \setminus \{p: p \mid n\}.$$
For each prime $p \in \mathcal{P}$, denote by $A_{p}$ the image of $A \pmod{p}$ and let $A_p^*=A_p \setminus \{0\}$. 

Let $p \in \mathcal{P}$. We can naturally view $A_p^*$ as a subset of $\F_p^*$. Since $A$ has property $D_k(n)$, it follows that $A_p^* \hat{ \times} A_p^*+ n \subset \{x^k: x \in \F_p^*\} \cup \{0\}$. Note that $\{x^k: x \in \F_p^*\}$ is the multiplicative subgroup of $\F_p^*$ with order $\frac{p-1}{\gcd(p-1,k)}$. Since $\gcd(p-1,k)>1$ and $p \nmid n$,  \cref{mainthm2} (2) implies that 
\[ |A_{p}| \leq |A_p^*|+1 \leq \sqrt{\frac{2(p-1)}{\gcd(p-1,k)}}+5.\]
Set $Q=2(\frac{\phi(k)\log N}{T})^2$. Applying Gallagher's larger sieve, we obtain that 
\begin{equation}\label{eq:bound}
|A| \le \frac{\sum_{p \in \mathcal{P},p \le Q}\log p - \log N}{\sum_{p \in \mathcal{P},p \le Q} \frac{\log p}{|A_{p}|}-\log N}.
\end{equation}
Let $c$ be the constant from \cref{partial-summation}. For the numerator on the right-hand side of inequality \cref{eq:bound}, we have
\begin{align*}
\sum_{p \in \mathcal{P},p \le Q}\log p - \log N
& \leq \sum_{i \in \mathcal{I}}\bigg(\sum_{\substack{p\equiv i \text{ mod }k, \\p \le Q}}\log p\bigg) - \log N \\
&=\frac{|\mathcal{I}|Q}{\phi(k)}+O\bigg(|\mathcal{I}|Q\exp(-c\sqrt{\log Q})\bigg)-\log N.
\end{align*}
Next, we estimate the denominator on the right-hand side of inequality \cref{eq:bound}. 
Note that $|\mathcal{I}|\le T=\sum_{i \in \mathcal{I}} \sqrt{\gcd(i-1,k)}$. Then we have $T \le |\mathcal{I}| \sqrt{k} \le \phi(k)\sqrt{k}$, and so $\phi(k)/T \ge 1/\sqrt{k}$.
Since $k=(\log N)^{o(1)}$, we deduce $Q > 2(\log N)^{2-o(1)}$. 
Thus we have $k=Q^{o(1)}$.
This, together with \cref{partial-summation} and \cref{p mid n}, deduces that for each $i \in \mathcal{I}$,
\begin{align*}
\sum_{\substack{p \in \mathcal{P}, p \le Q\\  p \equiv i \text{ mod } k}} \frac{\log p}{|A_{p}|}
& \geq \sum_{\substack{p \in \mathcal{P}, p \le Q \\  p \equiv i \text{ mod } k}} \frac{\log p}{\sqrt{\frac{2(p-1)}{\gcd(i-1,k)}}+5}\\
&=\sum_{\substack{p \le Q\\  p \equiv i \text{ mod } k}} \frac{\log p}{\sqrt{\frac{2p}{\gcd(i-1,k)}}}+O\bigg(\sum_{p \le Q} \frac{k\log p}{p}\bigg)+O\bigg(\sum_{p \mid n} \frac{\sqrt{k}\log p}{\sqrt{p}}\bigg)\\
&=\sqrt{\frac{\gcd(i-1,k)}{2}} \sum_{\substack{p \le Q\\  p \equiv i \text{ mod } k}} \frac{\log p}{\sqrt{p}}+O(k\log Q)+O(k (\log n)^{1/2})\\
&=\frac{\sqrt{2Q\gcd(i-1,k)}}{\phi(k)} +O\left( \sqrt{Q} \sqrt{\gcd(i-1,k)}
 \exp(-c\sqrt{\log Q}) \right).
\end{align*}
Thus we have
\begin{align*}
|A| 
&\le \frac{\sum_{p \in \mathcal{P},p \le Q}\log p - \log N}{\sum_{p \in \mathcal{P},p \le Q} \frac{\log p}{|A_{p}|}-\log N}\\
&\le \frac{\frac{|\mathcal{I}|Q}{\phi(k)}+O(|\mathcal{I}|Q\exp(-c\sqrt{\log Q}))-\log N}{\sum_{i \in \mathcal{I}}\bigg( \sum_{\substack{p \in \mathcal{P}, p \le Q\\  p \equiv i \text{ mod } k}} \frac{\log p}{|A_{p}|}\bigg) -\log N}\\
&\le \frac{\frac{|\mathcal{I}|Q}{\phi(k)}+O(|\mathcal{I}|Q\exp(-c\sqrt{\log Q}))-\log N}{\frac{T\sqrt{2Q}}{\phi(k)}+O\left(T
 \sqrt{Q} \exp(-c\sqrt{\log Q}) \right)-\log N}.
\end{align*}

Finally, recall that $Q=2(\frac{\phi(k)\log N}{T})^2$. It follows that
\begin{align*}
|A|&\leq \frac{2|\mathcal{I}|\phi(k)(\frac{\log N}{T})^2+O\left(|\mathcal{I}| (\frac{\phi(k)\log N}{T})^2\exp \Big(-c\sqrt{\log (\frac{\phi(k)\log N}{T})}\Big)\right)}{2\log N+O\left(\phi(k)\log N\exp \Big(-c\sqrt{\log (\frac{\phi(k)\log N}{T})}\Big)\right)-\log N}\\
&= \frac{\frac{2|\mathcal{I}|\phi(k)}{T^2} \log N+O\left(\frac{|\mathcal{I}|\phi(k)^2 \log N}{T^2} \exp \Big(-c\sqrt{\log (\frac{\phi(k)\log N}{T})}\Big)\right)}{1+O\left(\phi(k)
\exp \Big(-c\sqrt{\log (\frac{\phi(k)\log N}{T})}\Big)\right)}.
\end{align*}
Recall that \textcolor{black}{$N=n^{\frac{k}{k-2}}$}. Thus, to obtain our desired result, we need to show
\begin{align*}
|A|\leq  \frac{(1+o(1))\frac{2|\mathcal{I}|\phi(k)}{T^2} \log N}{1-o(1)},
\end{align*}
and it suffices to show that
\[\phi(k)\exp \Big(-c\sqrt{\log (\frac{\phi(k)\log N}{T})}\Big)=o(1),\]
as $N \to \infty$, or equivalently,
\[\log k- c\sqrt{\log \frac{\phi(k)}{T}+\log \log N} \to -\infty,\]
as $N \to \infty$. We notice that  $\phi(k)/T \ge 1/\sqrt{k}$.
Let $c'=c/2$. Then the assumption $\log k\leq c'\sqrt{\log \log n}<c'\sqrt{\log \log N}$ implies 
\[\log \log N + \log \frac{\phi(k)}{T} \ge \log \log N - \frac{1}{2}\log k = (1-o(1)) \log \log N,\]
and
\[\log k  - c\sqrt{\log \frac{\phi(k)}{T}+\log \log N} \leq -(c'-o(1))\log \log N \to -\infty,\]
as required.
\end{proof}

\begin{remark}\label{remark:6}
Note that when $\mathcal{I}=\{1\}$, that is to say, when we only consider primes $p$ such that $p \equiv 1 \pmod k$ for applying the Gallagher inequality, the condition $p \equiv 1 \pmod k$ guarantees that the $k$-th powers are indeed $k$-th powers modulo $p$. We have $T=T_\mathcal{I}=\sqrt{k}$, thus we trivially have $\eta_k \leq \frac{1}{k}$ in view of equation \cref{alpha_k}. In particular, if $k$ is fixed and $n \to \infty$, \cref{mainthm1} implies that
\textcolor{black}{
\begin{equation}\label{naive}
M_k(n) \leq \frac{(2+o(1))\phi(k)}{k-2} \log n, 
\end{equation}
}
which already provides a substantial improvement on the best-known upper bound $M_k(n)\leq (3\phi(k)+o(1))\log n$ whenever $k \geq 3$ given in \cite{DKM22}.
Moreover, note that $\frac{\phi(k)}{k}$ can be as small as $O(\frac{1}{\log \log k})$ when $k$ is the product of distinct primes \cite[Theorem 2.9]{MV07}. 
Thus, in view of \cref{mainthm1}, the inequality \cref{naive} already shows there is $k=k(n)$ such that $\log k \asymp \sqrt{\log \log n}$ and
\begin{equation}\label{logloglogn}
M_k(n)\ll \frac{\log n}{\log \log k} \ll \frac{\log n}{\log \log \log n}.   
\end{equation}
Note that \cref{logloglogn} already breaks the $\log n$ barrier. On the other hand, we can still use other primes $p$ such that $\gcd(p-1,k)>1$ for which $k$-th powers are in fact $\gcd(k,p-1)$-th powers modulo $p$ when we apply the Gallagher sieve inequality. We can take advantage of the improvement on the upper bound of $M_{k}(n)$. In the next two subsections, we further provide a significant improvement on inequality \cref{logloglogn}.
\end{remark}

Next, we define a \emph{strong Diophantine $m$-tuple with property $SD_{k}(n)$} to be a set $\{a_1,\ldots, a_m\}$ of $m$ distinct positive integers such that $a_{i}a_{j}+n$ is a $k$-th power for any choice of $i$ and $j$.
We have a stronger upper bound for the size of a strong Diophantine tuple with property $SD_k(n)$. We define 
\[MS_{k}(n)=\sup \{|A| \colon A\subset{\mathbb{N}} \text{ satisfies the property }SD_{k}(n)\}.\]
\begin{theorem}\label{mainthm1_strong}
There is a constant $c'>0$, such that as $n \to \infty$,
$$
MS_k(n)\leq \bigg(\textcolor{black}{\frac{k}{k-2}}+o(1)\bigg) \ \eta_k  \phi(k) \log n,
$$
holds uniformly for positive integers $k,n \geq 3$ such that $\log k \leq c'\sqrt{\log \log n}$. Moreover, if $k$ is even, under the same assumption \textcolor{black}{(including the case $k=2$)}, we have the stronger bound
$$
MS_k(n)\leq \min \{\big(1+o(1)\big)  \eta_k  \phi(k) \log n, \tau(n)\},
$$    
where $\tau(n)$ is the number of divisors of $n$. 
\end{theorem}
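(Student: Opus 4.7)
The plan is to follow the proof of \cref{mainthm1} with two modifications: replace the per-prime bound from \cref{mainthm2}~(2) with the sharper bound from \cref{mainthm2}~(1), and when $k$ is even exploit an elementary factoring identity that eliminates the tail entirely.

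First, fix $\mathcal{I}$ attaining $\eta_k$ and set $\mathcal{P}=\{p\colon p\equiv i\pmod k\text{ for some }i\in\mathcal{I},\ p\nmid n\}$. Let $A=\{a_1,\dots,a_m\}$ have property $SD_k(n)$. For each $p\in\mathcal{P}$, the nonzero residues $A_p^*\subset\F_p^*$ satisfy $A_p^*\cdot A_p^*+n\subset S_{d_i}\cup\{0\}$ with $d_i=\gcd(i-1,k)\geq 2$; crucially, since the tuple is \emph{strong}, the diagonal terms $a\cdot a+n$ are also of this form, so \cref{mainthm2}~(1) applies and yields $|A_p|\leq\sqrt{(p-1)/d_i}+2$ in place of the weaker $\sqrt{2(p-1)/d_i}+5$ used in \cref{mainthm1}. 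Running the Gallagher sieve computation of \cref{subsec: 5.1} with this input, the optimal choice becomes $Q=(\phi(k)\log N/T)^2$ (rather than $2(\phi(k)\log N/T)^2$), and the denominator of the sieve bound still evaluates to $(1-o(1))\log N$ under the hypothesis $\log k\leq c'\sqrt{\log\log n}$. This produces $|A\cap[1,N]|\leq(1+o(1))\eta_k\phi(k)\log N$, which upon choosing $N=n^3$ and absorbing the tail $|A\cap(n^3,\infty)|\ll\log k\log\log k=o(\log n)$ from \cref{prop:effective} (applicable since $SD_k(n)\subseteq D_k(n)$) gives $MS_k(n)\leq(3+o(1))\eta_k\phi(k)\log n$.

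For the refined bound when $k=2m$ is even, the key observation is that $a^2+n=v^{2m}$ factors as $(v^m-a)(v^m+a)=n$, so $v^m-a$ is a positive divisor of $n$ at most $\sqrt{n}$, and $a=\tfrac{1}{2}\bigl(n/(v^m-a)-(v^m-a)\bigr)$ is uniquely determined by it. This immediately yields $|A|\leq\tau(n)$, and more importantly forces every element of $A$ to lie in $[1,n/2]$. Rerunning the Gallagher argument above with $N=n$ in place of $N=n^3$, with no tail analysis required, yields the second bound $(1+o(1))\eta_k\phi(k)\log n$, completing the minimum.

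The main piece of work is verifying that the $o(1)$ error terms arising from \cref{Siegel-Walfisz}, \cref{partial-summation}, and \cref{p mid n} remain negligible under the new choice of $Q$. However, this mirrors the bookkeeping already carried out in the proof of \cref{mainthm1}, and the regime $\log k\leq c'\sqrt{\log\log n}$ still ensures $\phi(k)\exp(-c\sqrt{\log Q})=o(1)$, so no genuinely new analytic input is required.
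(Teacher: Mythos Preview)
Your proposal is correct and follows essentially the same route as the paper: replace the per-prime bound by \cref{mainthm2}~(1), adjust $Q$ to $(\phi(k)\log N/T)^2$, and for even $k$ use the factorization $(v^{k/2}-a)(v^{k/2}+a)=n$ to obtain both the $\tau(n)$ bound and the containment $A\subset[1,n]$ that permits taking $N=n$. The paper records the same two modifications (indeed with slightly less detail than you give).
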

\begin{proof}
The proof is very similar to the proof of \cref{mainthm1} and we follow all the notations and steps as in the proof of \cref{mainthm1}, apart from the minor modifications stated below.

We prove the first part. For each $p \in \mathcal{P}$, we have the stronger upper bound that $|A_p| \leq \sqrt{\frac{(p-1)}{\gcd(p-1,k)}}+2$ by \cref{mainthm2} (2). To optimize the upper bound obtained from Gallagher's larger sieve, we instead set $Q=(\frac{\phi(k)\log N}{T})^2$. 

Next, we assume that $k$ is even and prove the second part. Notice that for each $x \in A$, there is a positive integer $y$, such that $x^2+n=y^2$. Thus, $|A|$ is bounded by the number of positive integral solutions to the equation $x^2+n=y^2$, which is at most $\tau(n)$. On the other hand, this also implies that all the elements in $A$ are at most $n$. Thus, we can set $N=n$ instead and obtain the stronger upper bound.
\end{proof}

\subsection{Proof of \cref{mainthm1.5}}\label{subsec: size of Mk}

In this subsection, by finding a more refined upper bound on $\eta_k$ in equation \cref{alpha_k}, we show that the same approach significantly improves the upper bound of $M_{k}(n)$ in inequality \cref{logloglogn} when $k$ is the product of the first few distinct primes. 

We label all the primes in increasing order so that $2=p_1<p_2<\cdots<p_{\ell}<\cdots$. Let $P_{\ell}=\prod_{i=1}^{\ell} p_i$ be the product of first $\ell$ primes. Let $\mathcal{I}_1=\{1\}$. For $\ell \geq 1$, we define $\mathcal{I}_{\ell+1}$ inductively:
\begin{equation}
\label{C_ell}
\mathcal{I}_{\ell+1}=\{i+j P_{\ell}: i \in \mathcal{I}_{\ell}, 0 \leq j<p_{\ell+1}, p_{\ell+1} \nmid (i+j P_{\ell})\}.
\end{equation}
We note that $\mathcal{I}_{\ell}\subset \mathcal{I}_{\ell+1}$ for any $\ell \geq 1$. Also, it is clear that 
\begin{align}
\label{eq: Cl}
|\mathcal{I}_{\ell+1}|=|\mathcal{I}_{\ell}|(p_{\ell+1}-1).
\end{align}
\begin{lemma}
Following the above definitions, we have
    \begin{equation}
\label{claim}
    \mathcal{I}_\ell \subset \{1 \leq x \leq P_{\ell}: \gcd(x, P_{\ell})=1, \gcd(x-1, P_{\ell})>1\}.
\end{equation}    
\end{lemma}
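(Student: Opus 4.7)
The plan is a short induction on $\ell \geq 1$. The base case $\ell = 1$, where $\mathcal{I}_1 = \{1\}$ and $P_1 = 2$, is immediate: $1 \leq 1 \leq 2$, $\gcd(1,2) = 1$, and $\gcd(0,2) = 2 > 1$.

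For the inductive step, I would assume the inclusion \cref{claim} holds for $\ell$, and fix an arbitrary element $y = i + jP_\ell \in \mathcal{I}_{\ell+1}$ with $i \in \mathcal{I}_\ell$, $0 \leq j < p_{\ell+1}$, and $p_{\ell+1} \nmid y$, then verify the three required properties in turn. The size bound $1 \leq y \leq P_{\ell+1}$ follows from $1 \leq i \leq P_\ell$ (inductive hypothesis), $j \leq p_{\ell+1} - 1$, and $P_{\ell+1} = p_{\ell+1} P_\ell$.

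For coprimality $\gcd(y, P_{\ell+1}) = 1$, the condition $p_{\ell+1} \nmid y$ is built into the definition \cref{C_ell}, so it suffices to check each earlier prime divisor $p_m$ of $P_{\ell+1}$ (that is, $m \leq \ell$); since $p_m \mid P_\ell$ we have $y \equiv i \pmod{p_m}$, and the inductive hypothesis $\gcd(i, P_\ell) = 1$ yields $p_m \nmid i$, hence $p_m \nmid y$. For the non-coprimality condition $\gcd(y-1, P_{\ell+1}) > 1$, write $y - 1 = (i-1) + jP_\ell$; by the inductive hypothesis there is a prime $p_m$ with $m \leq \ell$ dividing both $i-1$ and $P_\ell$, so $p_m \mid (y-1)$ and $p_m \mid P_{\ell+1}$, giving $p_m \mid \gcd(y-1, P_{\ell+1})$.

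This is essentially a bookkeeping exercise, so no real obstacle arises; the only thing one must be careful about is keeping the roles of $P_\ell$ and $P_{\ell+1}$ straight when transferring divisibility facts across the inductive step. The inductive definition in \cref{C_ell} was clearly engineered precisely so that each of the three conditions is preserved, which is why the proof goes through without any extra input.
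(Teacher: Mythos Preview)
Your proof is correct and follows the same inductive approach as the paper. In fact, your argument is more complete: the paper's proof only spells out the verification of $\gcd(x, P_{\ell+1}) = 1$, leaving the size bound $1 \leq x \leq P_{\ell+1}$ and the condition $\gcd(x-1, P_{\ell+1}) > 1$ implicit.
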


\begin{proof}
We give an inductive proof. When $\ell=1$, the inclusion \cref{claim} holds. We assume that \cref{claim} holds for some $\ell \geq 1$. Let $x=i+jP_{\ell} \in \mathcal{I}_{\ell+1}$.  
By the assumption, we have $\gcd(i, P_{\ell})=1$, and it follows that $\gcd(x, P_{\ell+1})=\gcd(x,P_{\ell})\gcd(x,p_{\ell +1})=\gcd(i, P_{\ell})\gcd(x, p_{\ell+1})=1.$
This proves the claim.
\end{proof}
Furthermore, we introduce the following notation which is similar to the previously introduced on equation \cref{D}. For each $\ell\geq 1$, we let
$$
T_{\ell}=\sum_{y \in \mathcal{I}_\ell} \sqrt{\gcd(y-1, P_{\ell})}.
$$
Note that $T_1=\sqrt{2}$. We also establish a recurrence relation on the sequence.
\begin{lemma}\label{lem: Dl}
The sequence $(T_{\ell})_{\ell\geq 1}$ satisfies the recurrence relation
\begin{align}\label{eq: Dl}
T_{\ell+1}=T_{\ell}(p_{\ell+1}-2+\sqrt{p_{\ell+1}}).
\end{align}
\end{lemma}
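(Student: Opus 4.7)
The plan is to establish \cref{eq: Dl} by a direct counting argument combined with the multiplicativity of the gcd. The key observation is that for $x = i + j P_\ell \in \mathcal{I}_{\ell+1}$ with $i \in \mathcal{I}_\ell$, we have $P_{\ell+1} = P_\ell \cdot p_{\ell+1}$ with $\gcd(P_\ell, p_{\ell+1}) = 1$, so
$$\gcd(x-1, P_{\ell+1}) = \gcd(x-1, P_\ell) \cdot \gcd(x-1, p_{\ell+1}).$$
Moreover, since $x \equiv i \pmod{P_\ell}$, the first factor simplifies to $\gcd(i-1, P_\ell)$, which depends only on $i$. Thus the sum defining $T_{\ell+1}$ will factor as a sum over $i \in \mathcal{I}_\ell$ (recovering $T_\ell$) times a contribution from the $j$-variable.

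The main step is to compute, for each fixed $i \in \mathcal{I}_\ell$, the contribution
$$\sum_{j} \sqrt{\gcd(i + jP_\ell - 1, p_{\ell+1})},$$
where $j$ ranges over $\{0, 1, \ldots, p_{\ell+1}-1\}$ subject to the admissibility condition $p_{\ell+1} \nmid (i + jP_\ell)$. Since $P_\ell$ is invertible modulo $p_{\ell+1}$, as $j$ varies over $\{0, \dots, p_{\ell+1}-1\}$ the residue $i + jP_\ell \pmod{p_{\ell+1}}$ hits every element of $\mathbb{Z}/p_{\ell+1}\mathbb{Z}$ exactly once. The admissibility condition excludes the unique $j$ producing residue $0$, leaving $p_{\ell+1}-1$ allowed values of $j$. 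Among these, exactly one produces residue $1$ (so $\gcd = p_{\ell+1}$, contributing $\sqrt{p_{\ell+1}}$) and the remaining $p_{\ell+1} - 2$ produce $\gcd = 1$ (each contributing $1$). The inner sum therefore equals $p_{\ell+1} - 2 + \sqrt{p_{\ell+1}}$, independently of $i$.

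Combining these two steps gives
$$T_{\ell+1} = \sum_{i \in \mathcal{I}_\ell} \sqrt{\gcd(i-1, P_\ell)} \cdot \bigl(p_{\ell+1} - 2 + \sqrt{p_{\ell+1}}\bigr) = T_\ell \bigl(p_{\ell+1} - 2 + \sqrt{p_{\ell+1}}\bigr),$$
which is the required recurrence. No step looks particularly delicate; the only point requiring care is the bookkeeping of which residue class modulo $p_{\ell+1}$ is excluded by the admissibility condition versus which one produces $x \equiv 1 \pmod{p_{\ell+1}}$. These are always distinct (since $0 \not\equiv 1 \pmod{p_{\ell+1}}$), so the count splits cleanly as $1 + (p_{\ell+1}-2)$ regardless of whether $i \equiv 1 \pmod{p_{\ell+1}}$ or not.
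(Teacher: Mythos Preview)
Your proof is correct and follows essentially the same approach as the paper's: factor the gcd via $P_{\ell+1} = P_\ell \cdot p_{\ell+1}$, note that the $P_\ell$-part depends only on $i$, and evaluate the inner sum over $j$. The paper simply asserts that ``the inner sum consists of $(p_{\ell+1}-2)$ many $1$ and a single $\sqrt{p_{\ell+1}}$,'' whereas you spell out the residue-counting argument explicitly; otherwise the two proofs coincide.
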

\begin{proof}
We have
    \begin{align*}
T_{\ell+1}
&=\sum_{i \in \mathcal{I}_{\ell}} \sum_{\substack{0 \leq j<p_{\ell+1}\\ p_{\ell+1} \nmid (i+j P_{\ell})}}\sqrt{\gcd(i+jP_{\ell}-1, P_{\ell+1})}\\
&=\sum_{i \in \mathcal{I}_{\ell}} \sum_{\substack{0 \leq j<p_{\ell+1}\\ p_{\ell+1} \nmid (i+j P_{\ell})}}\sqrt{\gcd(i+jP_{\ell}-1, P_{\ell})} \sqrt{\gcd(i+jP_{\ell}-1, p_{\ell+1})}\\
&=\sum_{i \in \mathcal{I}_{\ell}} \sqrt{\gcd(i-1, P_{\ell})} \bigg(\sum_{\substack{0 \leq j<p_{\ell+1}\\ p_{\ell+1} \nmid (i+j P_{\ell})}} \sqrt{\gcd(i+jP_{\ell}-1, p_{\ell+1})}\bigg).
\end{align*}
It is easy to show that the inner sum consists of $(p_{\ell+1}-2)$ many $1$ and a single $\sqrt{p_{\ell+1}}$. It follows that
$$
T_{\ell+1}=(p_{\ell+1}-2+\sqrt{p_{\ell+1}})\sum_{i \in \mathcal{I}_{\ell}} \sqrt{\gcd(i-1, P_{\ell})}=T_{\ell} (p_{\ell+1}-2+\sqrt{p_{\ell+1}}),
$$
proving the lemma.
\end{proof}

We are now ready to prove \cref{mainthm1.5}.

\begin{proof}[Proof of \textup{\cref{mainthm1.5}}]
For each $n$, we choose $k=k(n)=P_\ell$, where $\ell=\ell(n)$ is the largest integer such that $\log P_\ell<c' \sqrt{\log \log n}$. It follows that $\log k=\log P_\ell \asymp \sqrt{\log \log n}$. 
Thus, using equations \cref{eq: Cl} and $\cref{eq: Dl}$, we have
$$
\eta_k \phi(k) 
\leq \frac{|\mathcal{I}_\ell| \phi(P_\ell)}{T_{\ell}^2}
=\prod_{p \leq p_{\ell}} \frac{(p-1)^2}{(p-2+\sqrt{p})^2}.
$$
Note that for each prime $p$, it is easy to verify that
$
\frac{p-1}{p-2+\sqrt{p}} \leq 1-\frac{1}{\sqrt{p}}.
$
Recall that the inequality $e^x \geq 1+x$ holds for all real $x$, and a standard application of partial summation gives
$$
\sum\limits_{p \leq x} \frac{1}{\sqrt{p}} = \frac{2\sqrt{x}}{\log x} + O\left(\frac{\sqrt{x}}{\log ^2x}\right).
$$
Also, the prime number theorem implies that 
$$
\log P_\ell=\sum_{p \leq p_{\ell}} \log p=\theta(p_{\ell})=(1+o(1))p_{\ell}
$$
and thus $p_{\ell}=(1+o(1))\log P_{\ell}$. Putting the above estimates altogether, we have
\begin{align*}
\eta_k \phi(k) 
&\leq \prod_{p  \leq p_{\ell}} \bigg(1-\frac{1}{\sqrt{p}}\bigg)^2
\leq \exp \bigg(-2\sum_{p \leq p_{\ell}} \frac{1}{\sqrt{p}}\bigg)\\ &=\exp\bigg(-\frac{(4+o(1))\sqrt{p_{\ell}}}{\log p_{\ell}}\bigg)
=\exp\bigg(-\frac{(4+o(1))\sqrt{\log P_\ell}}{\log \log P_\ell}\bigg)\\
&\leq \exp\bigg(-\frac{c''(\log \log n)^{1/4}}{\log \log \log n}\bigg).
\end{align*}
for some absolute constant $c''>0$.
It follows from \cref{mainthm1} that 
\[
M_k(n) \ll \eta_k\phi(k) \log n \ll \exp\bigg(-\frac{c''(\log \log n)^{1/4}}{\log \log \log n}\bigg) \log n. 
\]
\end{proof}

\subsection{An upper bound on \texorpdfstring{$\eta_{k}$}{}} \label{sec: approximation}
In this subsection, we deduce a simple upper bound of $\eta_k$. It turns out that this upper bound well approximates $\eta_k$ empirically.

\begin{theorem}\label{refined_alpha_k}
For any $k \ge 2$, we have $$\eta_{k} \le \mu_{k},$$ where $\mu_{k} = R_{k} \cdot \min \{\beta(p^\alpha): p^\alpha \vert \vert k\}$ with
\[R_{k}=\prod_{p^{\alpha} \mid \mid k} \frac{(p-1) p^{\alpha-1}}{\big(p^{\alpha} - p^{\alpha -1} - p^{(\alpha-1)/2}  + p^{\alpha -1/2}\big)^2},\]
and
\[\beta(p^\alpha) = \frac{\big(p^{\alpha} - p^{\alpha -1} - p^{(\alpha-1)/2}  + p^{\alpha -1/2}\big)^2}{(p-1)\big(- p^{(\alpha-1)/2} + p^{\alpha-1}  + p^{\alpha-1/2}\big)^2}.\]
\end{theorem}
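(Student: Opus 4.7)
The plan is to exhibit, for each prime power $p^\alpha \mid\mid k$, an explicit candidate subset $\mathcal{I}_p$ of $\{1\leq i\leq k:\gcd(i,k)=1,\ \gcd(i-1,k)>1\}$ whose ratio $|\mathcal{I}_p|/T_{\mathcal{I}_p}^2$ equals $R_k\cdot\beta(p^\alpha)$. Taking the minimum over $p$ would then give $\eta_k\leq\mu_k$ directly from the definition \cref{alpha_k}. The construction is via the Chinese Remainder Theorem: fix a ``distinguished'' prime $p$ with $p^\alpha\mid\mid k$, and let $\mathcal{I}_p$ consist of those residues $a\pmod k$ such that, writing $a\equiv a_q\pmod{q^\beta}$ for each $q^\beta\mid\mid k$, one has $\gcd(a_q,q^\beta)=1$ for every such $q$, and in addition $a_p\equiv 1\pmod p$. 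The two conditions guarantee $\gcd(a,k)=1$ and $p\mid\gcd(a-1,k)$, so $\mathcal{I}_p$ is a valid competitor in \cref{alpha_k}, and it is nonempty whenever $k\geq 2$.

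Next, I would use multiplicativity under CRT. Since $\gcd(a-1,k)=\prod_{q^\beta\mid\mid k}\gcd(a_q-1,q^\beta)$, the sum $T_{\mathcal{I}_p}=\sum_{a\in\mathcal{I}_p}\sqrt{\gcd(a-1,k)}$ factors as a product over the local factors of $k$, and likewise $|\mathcal{I}_p|$ factors. Concretely, for the distinguished prime $p$ I must evaluate
\[
E_{p^\alpha} := \sum_{\substack{a_p\pmod{p^\alpha}\\ a_p\equiv 1\, (p)}} \sqrt{\gcd(a_p-1,p^\alpha)},
\]
and for each $q^\beta\mid\mid k$ with $q\neq p$ the analogous unrestricted local sum
\[
D_{q^\beta} := \sum_{\substack{a_q\pmod{q^\beta}\\ \gcd(a_q,q^\beta)=1}} \sqrt{\gcd(a_q-1,q^\beta)}.
\]
Stratifying by the value of $\gcd(a_q-1,q^\beta)=q^j$ ($0\leq j\leq\beta$) gives geometric series in $q^{-1/2}$; summing them in closed form yields
\[
E_{p^\alpha}=p^{\alpha-1/2}+p^{\alpha-1}-p^{(\alpha-1)/2},\qquad D_{q^\beta}=q^\beta-q^{\beta-1}-q^{(\beta-1)/2}+q^{\beta-1/2},
\]
with counts $p^{\alpha-1}$ and $q^{\beta-1}(q-1)$ respectively. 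This identification with the quantities appearing in the definitions of $R_k$ and $\beta(p^\alpha)$ is the one calculation that requires care.

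Assembling the local factors then yields
\[
\frac{|\mathcal{I}_p|}{T_{\mathcal{I}_p}^2}=\frac{p^{\alpha-1}}{E_{p^\alpha}^2}\prod_{\substack{q^\beta\mid\mid k\\ q\neq p}}\frac{q^{\beta-1}(q-1)}{D_{q^\beta}^2},
\]
and a brief manipulation shows that the right-hand side is exactly $R_k\cdot\beta(p^\alpha)$, since the ``missing'' factor $(p-1)/D_{p^\alpha}^2$ in $R_k$ is restored by $\beta(p^\alpha)=D_{p^\alpha}^2/((p-1)E_{p^\alpha}^2)$. Minimizing over the choice of distinguished prime $p\mid k$ delivers the bound $\eta_k\leq\mu_k$. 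The main obstacle is purely computational, namely the closed-form evaluation of $E_{p^\alpha}$ and $D_{q^\beta}$ via summation of the geometric progression $\sum_j q^{-j/2}$ and keeping careful track of the boundary term at $j=\beta$; all other steps are formal consequences of CRT and the definitions.
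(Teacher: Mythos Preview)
Your proposal is correct and follows essentially the same approach as the paper: both choose the candidate set $\mathcal{I}_p=\{1\le i\le k:\gcd(i,k)=1,\ i\equiv 1\pmod{p}\}$ for a distinguished prime $p\mid k$, factor $|\mathcal{I}_p|$ and $T_{\mathcal{I}_p}$ via CRT, evaluate the same local sums (your $D_{q^\beta}$ and $E_{p^\alpha}$ are exactly the paper's $\sum_s \sqrt{p_j^s}|C_{j,s}|$ for $j<\ell$ and $j=\ell$), and identify the ratio with $R_k\cdot\beta(p^\alpha)$. The only cosmetic difference is that the paper fixes the minimizing prime $p_\ell$ at the outset, whereas you compute the ratio for each $p$ and then minimize; the content is identical.
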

\begin{proof}
We denote $k=\prod_{j=1}^{\ell} p_j^{\alpha_j}$, where $p_1,p_2, \ldots, p_\ell$ are distinct primes factors of $k$ such that $$\beta(p_\ell^{\alpha_\ell})=\min\{\beta(p^\alpha): p^\alpha \mid \mid k\}.$$
Define $$\mathcal{I}=\{1 \leq i \leq k: \gcd(k,i)=1, i \equiv 1 \pmod {p_{\ell}}\}, \quad
T_{\mathcal{I}}=\sum_{i \in \mathcal{I}} \sqrt{\gcd(i-1,k)}.
$$
Then $\mathcal{I}$ is obviously a subset of the set 
$\{1 \leq i \leq k: \gcd(i,k)=1, \gcd(i-1,k)>1\}$
consisting of residue classes that can be used in Gallagher's larger sieve in the proof of \cref{mainthm1}.
(In particular, when $p_\ell=2$, $\mathcal{I}$ consists of all the available residue classes with $|\mathcal{I}|=\phi(k)$.) In view of the definition of $\eta_k$, it suffices to show that 
$$\frac{|\mathcal{I}|}{T_{\mathcal{I}}^2}=\mu_k=R_k \cdot \beta(p_\ell^{\alpha_\ell}).$$
We first compute the size of $\mathcal{I}$. Equivalently, we can write 
$$
\mathcal{I}=\{1 \leq i \leq k: i \not \equiv 0 \pmod{p_j} \text{ for each }1 \leq j<\ell, \text{ and }  i \equiv 1 \pmod {p_{\ell}} \},
$$
and hence, we deduce $|\mathcal{I}|=\prod_{j=1}^{\ell-1} (p_j-1)p_j^{\alpha_j-1} \cdot p_{\ell}^{\alpha_\ell-1}$.
In order to compute $T_{\mathcal{I}}$, we first count the number of solutions to $v_{p_j}(i-1)=s$ over $1 \leq i \leq p_j^{\alpha_j}$ such that $p_j \nmid i$ for $0 \leq s \leq \alpha_j$ separately, and then use the Chinese remainder theorem. 
Set
\begin{align*}
C_{j,s}&=\{1 \leq i \leq p_j^{\alpha_j}: i\not\equiv 0 \pmod{p_{j}},~ v_{p_j}(i-1)=s \}, \quad \text{for $0\leq s\leq \alpha_{j}$, $j<\ell$;}\\
C_{\ell,s}&=\{1 \leq i \leq p_{\ell}^{\alpha_{\ell}}: i\equiv 1 \pmod{p_{\ell}},~ v_{p_\ell}(i-1)=s \}, \quad \text{for $0\leq s\leq \alpha_{\ell}$}.
\end{align*}
Note that
\begin{align*}
|C_{j,s}|&=\phi(p_j^{\alpha_j-s}), \quad \quad \quad \quad \text{for $0<s\leq \alpha_{j}$, $j<\ell$;}\\
|C_{j,0}|&=\phi(p_j^{\alpha_j})-p_{j}^{\alpha_{j}-1}, \quad \; \; \text{for $j<\ell$;}\\
|C_{\ell,s}|&=\phi(p_{\ell}^{\alpha_{\ell}-s}), \quad \quad \quad \quad \text{for $0<s\leq \alpha_{\ell}$},
\end{align*}
and $|C_{\ell,0}|=0$. It follows that
\begin{align*}
T_{\mathcal{I}}=\sum_{i \in \mathcal{I}} \sqrt{\gcd(i-1,k)}=\sum_{d \mid k} \sqrt{d} \sum_{\substack{i \in \mathcal{I},\\ \gcd(i-1,k)=d}} 1=\prod_{j=1}^{\ell}\left(\sum_{s=0}^{\alpha_{j}}\sqrt{p_{j}^{s}}|C_{j,s}|\right).
\end{align*}
For each $1 \leq j \leq \ell-1$, we calculate
\begin{align*}
\sum_{s=0}^{\alpha_j} \sqrt{p_j^s} |C_{j,s}|
%&=|C_{j,0}| + \sum_{s=1}^{\alpha_{j}}\sqrt{p_{j}^{s}}|C_{j,s}|
=\phi(p_{j}^{\alpha_{j}}) - p_{j}^{\alpha_{j}-1} + \sum_{s=1}^{\alpha_{j}}\sqrt{p_{j}^s} \phi(p_{j}^{\alpha_{j}-s})
%&=p_{j}^{\alpha_{j}} - 2p_{j}^{\alpha_{j}-1} + p_{j}^{\alpha_{j}/2} + \sum_{s=1}^{\alpha_{j}-1}p_{j}^{s/2} (p_{j}^{\alpha_{j} - s} - p_{j}^{\alpha_{j}-s-1}) \\
=p_{j}^{\alpha_{j}} - p_{j}^{\alpha_{j} -1} - p_{j}^{(\alpha_{j}-1)/2}  + p_{j}^{\alpha_{j} -1/2}. 
\end{align*}

Similarly, we have
\begin{align*}
\sum_{s=0}^{\alpha_\ell} \sqrt{p_\ell^s} |C_{\ell,s}| 
& = \sum_{s=1}^{\alpha_\ell} \sqrt{p_\ell^s} \phi(p_\ell^{\alpha_\ell-s})=  - p_{\ell}^{(\alpha_{\ell}-1)/2} + p_{\ell}^{\alpha_{\ell}-1}  + p_{\ell}^{\alpha_{\ell}-1/2}.\\
\end{align*}
Putting these all together, we compute
\begin{align*}
T_{\mathcal{I}}
%&=\prod_{j=1}^{\ell-1}\left(\sum_{s=0}^{\alpha_{j}}\sqrt{p_{j}^{s}}|C_{j,s}|\right)\cdot \sum_{s=1}^{\alpha_{\ell}}\sqrt{p_{\ell}^{s}}|C_{\ell,s}|\\
&=\prod_{j=1}^{\ell-1} \bigg[p_{j}^{\alpha_{j}} - p_{j}^{\alpha_{j} -1} - p_{j}^{(\alpha_{j}-1)/2}  + p_{j}^{\alpha_{j} -1/2} \bigg]\cdot \bigg( - p_{\ell}^{(\alpha_{\ell}-1)/2} + p_{\ell}^{\alpha_{\ell}-1}  + p_{\ell}^{\alpha_{\ell}-1/2} \bigg).
\end{align*}
Hence, 
\begin{align*}
\frac{|\mathcal{I}|}{T_{\mathcal{I}}^2}
&=\prod_{j=1}^{\ell-1} \frac{(p_j-1) p_j^{\alpha_j-1}}{\big(p_{j}^{\alpha_{j}} - p_{j}^{\alpha_{j} -1} - p_{j}^{(\alpha_{j}-1)/2}  + p_{j}^{\alpha_{j} -1/2}\big)^2}\cdot \frac{ p_\ell^{\alpha_\ell-1}}{\big( - p_{\ell}^{(\alpha_{\ell}-1)/2} + p_{\ell}^{\alpha_{\ell}-1}  + p_{\ell}^{\alpha_{\ell}-1/2}\big)^2}.
\end{align*}

\end{proof}

Therefore, \cref{mainthm1} implies 
\begin{corollary}
There is a constant $c'>0$, such that as $n \to \infty$,
$$
M_k(n)\leq \bigg(\textcolor{black}{\frac{2k}{k-2}}+o(1)\bigg) \ \mu_k  \phi(k) \log n,
$$
holds uniformly for positive integers $k,n \geq 3$ such that $\log k \leq c'\sqrt{\log \log n}$. 
\end{corollary}

\begin{remark}
Our computations indicate that when $2 \leq k \leq 100{,}000$, the inequality $\mu_k \leq 2\eta_k$ holds for all but $501$ of them. This numerical evidence suggests that $\mu_k$ provides a good approximation for $\eta_k$ for a generic $k$. Note that the computational complexity for computing $\mu_k$ is the same as that of the prime factorization of $k$: a naive algorithm takes $O(\sqrt{k})$ time. The best theoretical algorithm has running time $O\big(\exp((\log k)^{1/3+o(1)})\big)$ using the general number field sieve \cite{BLP93}. On the other hand, computing $\eta_k$ requires $O(k\log k)$ time; we refer to \cref{appendix} for an algorithm and some computational results. 
\end{remark}

\section{Multiplicative decompositions of shifted multiplicative subgroups}\label{sec: Multiplicative decompositions}

In this section, we present our contributions to \cref{conjecture:MD}. In particular, we make significant progress towards S\'{a}rk\"{o}zy's conjecture (\cref{conjecture:S}). We recall $S_d=S_d(\F_q)=\{x^d: x \in \F_q^*\}$.

\subsection{Applications to S\'{a}rk\"{o}zy's conjecture}
\label{subsec: Sarkozy}

 In this subsection, we show that for almost all primes $p \equiv 1 \pmod d$, the set $(S_d(\F_p)-1) \setminus \{0\}$ cannot be decomposed as the product of two sets non-trivially. This confirms the truth of S\'{a}rk\"{o}zy's conjecture (\cref{conjecture:S})  as well as the truth of its generalization in the generic case (\cref{conjecture:MD}) when the shift of the subgroup is given by $\lambda=1$. 

\begin{theorem}\label{thm:almostall}
Let $d \geq 2$ be fixed. As $x \to \infty$, the number of primes $p \leq x$ such that $p \equiv 1 \pmod d$ and  $(S_d(\F_p)-1) \setminus \{0\}$ can be decomposed as the product of two sets non-trivially (that is, it can be written as the product of two subsets of $\F_p^*$ with size at least 2) is $o(\pi(x))$. 
\end{theorem}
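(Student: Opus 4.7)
The plan is to obtain \cref{thm:almostall} as a direct consequence of \cref{mainthm5} applied with the trivial shift $n=1$. First I would observe that $1^{1/d} = 1 \in \mathbb{Q}$, so
\[
\mathbb{Q}(e^{2\pi i/d}, 1^{1/d}) = \mathbb{Q}(\zeta_d),
\]
which has degree $\phi(d)$ over $\mathbb{Q}$. Plugging $n = 1$ into the lower bound of \cref{mainthm5} then gives that the number of primes $p \leq x$ with $p \equiv 1 \pmod d$ for which $(S_d(\F_p) - 1) \setminus \{0\}$ has no non-trivial multiplicative decomposition is at least $\bigl(\tfrac{1}{\phi(d)} - o(1)\bigr)\pi(x)$.

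Next I would compare this with the total count of primes in the residue class. By the prime number theorem for arithmetic progressions (Dirichlet's theorem with a quantitative error, which is all one needs since $d$ is fixed), the number of primes $p \leq x$ with $p \equiv 1 \pmod d$ equals $\bigl(\tfrac{1}{\phi(d)} + o(1)\bigr)\pi(x)$. Subtracting the lower bound above from this total, the number of exceptional primes (those for which $p \equiv 1 \pmod d$ and $(S_d(\F_p)-1)\setminus\{0\}$ \emph{does} admit a non-trivial multiplicative decomposition) is at most
\[
\left(\tfrac{1}{\phi(d)}+o(1)\right)\pi(x) - \left(\tfrac{1}{\phi(d)}-o(1)\right)\pi(x) \;=\; o(\pi(x)),
\]
which is exactly the conclusion of the theorem.

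The only subtlety worth flagging is whether the Chebotarev-type lower bound in \cref{mainthm5} is sharp (i.e.\ attains $1/\phi(d)$) in the degenerate case $n=1$. This is immediate, since the governing splitting condition in \cref{lem:density} specialises, when $a = 1$, to the single congruence $p \equiv 1 \pmod d$ --- the condition ``$1 \equiv b^d \pmod p$ for some $b \in \F_p^*$'' is automatic. So there is no hidden loss, and the theorem reduces to a short two-line deduction from \cref{mainthm5} together with Dirichlet's theorem. Accordingly, I anticipate no real obstacle in writing out the argument; essentially all the substantive work has already been carried out in the proof of \cref{mainthm5} via \cref{cor:Sidon}, the size bounds of \cref{thm:minlb}, and the Chebotarev density theorem.
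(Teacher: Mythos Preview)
Your deduction is arithmetically correct: taking $n = 1$ in \cref{mainthm5} collapses the governing field to $\mathbb{Q}(\zeta_d)$ of degree $\phi(d)$, and subtracting the resulting lower bound from the Dirichlet count for the class $1 \pmod d$ leaves $o(\pi(x))$ exceptional primes. As a formal implication, \cref{thm:almostall} is indeed the $n=1$ case of \cref{mainthm5}.

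The problem is structural circularity. In the paper, \cref{thm:almostall} is proved \emph{first} and directly; the proof of \cref{mainthm5} that follows consists essentially of the sentence ``we can repeat the same argument as in the proof of \cref{thm:almostall}'' (restricted to the primes for which $n$ is a $d$-th power residue, whose density is supplied by \cref{lem:density}). So \cref{mainthm5} is not an independently established black box you can invoke here --- its proof \emph{is} the proof of \cref{thm:almostall}, replayed in a slightly larger context. Citing it to establish \cref{thm:almostall} is circular as the paper is organised.

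The paper's direct argument runs as follows. For a bad prime $p$, \cref{cor:Sidon} (applicable since $\lambda = 1 \in S_d$) forces $|A||B| = (p-1)/d - 1$, i.e.\ $d|A||B| = p - (d+1)$; \cref{thm:minlb} then traps $\min\{|A|,|B|\}$ in an interval $(C_d\sqrt{p}, \sqrt{p})$, so $p-(d+1)$ must have a divisor of that size. Ford's theorem on shifted primes with a divisor in a prescribed interval then shows this occurs for $o(\pi(x))$ primes, and a dyadic summation finishes the count. That is the substantive work you allude to, but it lives in the proof of \cref{thm:almostall} itself, not in a prior, self-contained proof of \cref{mainthm5}. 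If you reorganised the paper to prove \cref{mainthm5} first in full detail, your two-line reduction would then be perfectly valid.
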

\begin{proof}
Let $\mathcal{P}_d$ be the set of primes $p$ such that $p \equiv 1 \pmod d$ and $(S_d(\F_p)-1) \setminus \{0\}$ admits a non-trivial multiplicative decomposition. By the prime number theorem for arithmetic progressions, it suffices to show that $|\mathcal{P}_d \cap [0,x]|=o(x/\log x)$. 

Let $p \in \mathcal{P}_d$. Then we can write $(S_d-1) \setminus \{0\}$ as the product of two sets $A,B \subset \F_p^*$ such that $|A|, |B| \geq 2$. Then \cref{cor:Sidon} implies that $|A||B|=\frac{p-1}{d}-1,$ that is, 
\begin{align}\label{eq: AB}
d |A||B|=p-(d+1).    
\end{align}
On the other hand, \cref{thm:minlb} implies that we can find an absolute constant $C_d \in (0,1)$ such that
 $$
 C_d\sqrt{p}<\min\{|A|, |B|\}<\sqrt{p}.
 $$
It follows that $p-(d+1)$ has a divisor in the interval $(C_d\sqrt{p}, \sqrt{p})$. To summarize, if $p \in \mathcal{P}_d$, then we have 
$\tau (p-(d+1);C_d\sqrt{p},\sqrt{p}) \geq 1,$
where $\tau (n;y,z)$ denotes the number of divisors of $n$ in the interval $(y,z]$. Now, we use results by Ford \cite[Theorem 6]{F08} on the distribution of shift primes with a divisor in a given interval. Denote 
 \begin{align}
 H(x,y,z)& =\#\{1 \leq n \leq x: \tau (n;y,z) \geq 1\};\\
 P_d(x,y,z)&=\#\{p \leq x: \tau (p-(d+1);y,z) \geq 1\}.     
 \end{align}
 Setting $y=C_d\sqrt{x}/2$ and $z=\sqrt{x}$, \cite[Theorem 6]{F08} and \cite[Theorem 1, third case of (v)]{F08} imply that
 $$
 P_d(x,y,z) \ll \frac{H(x,y,z)}{\log x} \ll \frac{x}{\log x} u^\delta \bigg(\log \frac{2}{u}\bigg)^{-3/2}
 $$
 where $\delta=1-\frac{1+\log \log 2}{\log 2}$ and $u=\log (C_d/2)/\log y$. 
 It follows that as $x \to \infty$, we have $P_d(x,y,z)=o(x/\log x)$. Therefore, we have
 $$
 \#\{p \in \mathcal{P}_d: x/2\leq p\leq x\}\leq \#\{x/2\leq p\leq x: \tau (p-(d+1); C_d\sqrt{x}/2,\sqrt{x}) \geq 1\}=o(x/\log x).
 $$ 
 We conclude that as $x \to \infty$,
 \begin{align*}
 |\mathcal{P}_d \cap [0,x]|
 &=O(\sqrt{x})+\#\{p \in \mathcal{P}_d: \sqrt{x}\leq p\leq x\}\\
 &=O(\sqrt{x})+\sum_{0\leq j \leq (\log_2 x)/2} o\bigg(\frac{x/2^j}{\log (x/2^j)}\bigg)\\
 &=O(\sqrt{x})+\bigg(\sum_{0\leq j \leq (\log_2 x)/2} \frac{1}{2^j}\bigg)o\bigg(\frac{x}{\log x}\bigg)
 =o\bigg(\frac{x}{\log x}\bigg).     
 \end{align*}
\end{proof}

Using a similar argument, we can prove \cref{mainthm5}:
\begin{proof}[Proof of \textup{\cref{mainthm5}}]
    Consider the family of primes $p$ such that $p \equiv 1 \pmod d$ and $n$ is a $d$-th power modulo $p$. By a standard application of the Chebotarev density theorem, the density of such primes is given by $\frac{1}{[\mathbb{Q}(e^{2\pi i/d}, n^{1/d}):\mathbb{Q}]}$. Among the family of such primes $p$, we can repeat the same argument as in the proof of \cref{thm:almostall} to show that if  $(S_d(\F_p)-n) \setminus \{0\}$ admits a non-trivial multiplicative decomposition, then $p-(d+1)$ necessarily has a divisor which is ``close to'' $\sqrt{p}$. We remark that it is important to assume that $n$ is a $d$-th power modulo $p$, so that we can take advantage of \cref{cor:Sidon}. Similar to the proof of \cref{thm:almostall}, we can show that among the family of primes $p \equiv 1 \pmod d$, the property that $p-(d+1)$ has a divisor with the desired magnitude fails to hold for almost all $p$. This finishes the proof of the theorem.
\end{proof}

\begin{remark}
When $n$ is a fixed negative integer, one can obtain a similar result to \cref{mainthm5} following the idea of the above proof.
\end{remark}

\begin{remark}
\cref{mainthm5} essentially states if $d$ is fixed, $p \equiv 1 \pmod d$ is a prime, and  $\lambda \in S_d(\F_p)$, then it is very unlikely that we can decompose $(S_d(\F_p)-\lambda)\setminus \{0\}$ as the product of two subsets of $\F_p^*$ non-trivially. On the other hand, when $\lambda \notin S_d(\F_p)$, the above technique does not apply. Nevertheless, when $\lambda \notin S_d$ and we have two sets $A,B \subset \F_p^*$ such that $AB=(S_d-\lambda) \setminus \{0\}=S_d-\lambda$, \cref{thm: stepanovea} implies that
$$
|S_d|\leq |A||B|\leq |S_d|+\min \{|A|,|B|\}-1.
$$
In particular, we get the following non-trivial fact: if $|A|$ is fixed, then $|B|$ is also uniquely fixed. 
\end{remark}

\subsection{Applications to special multiplicative decompositions}\label{sec:specialMD}

In this subsection, we verify the ternary version of \cref{conjecture:MD} in a strong sense, which generalizes \cite[Theorem 2]{S14}.

Shkredov \cite[Theorem 3]{S20} showed if $G$ is a multiplicative subgroup of $\F_p$ with $1 \ll_{\epsilon} |G| \leq p^{6/7-\epsilon}$, then there is no $A \subset \F_p$ and $\xi \in \F_p^*$ such that $A/A=\xi G+1$. In fact, due to the analytic nature of the proof, he pointed out that his proof can be slightly modified to show something stronger, namely $A/A \neq (\xi G+1) \cup C$, as long as $C$ is small%\footnote{private communication} 
(see also \cite[Remark 15]{S20}). The following corollary of \cref{thm: stepanovea} is of a similar flavor. 

\begin{corollary}
Let $p$ be a prime. If $G$ is a proper multiplicative subgroup of $\F_p$ with $|G|\geq 8$, and $\lambda, \xi \in \F_p^*$, then there is no $A \subset \F_p^*$ such that $AA=(\xi G-\lambda) \setminus \{0\}$.
\end{corollary}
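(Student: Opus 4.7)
My plan is to apply \cref{thm: stepanovea} to the pair $(A,A)$ after a suitable rescaling, and then combine the resulting size bound with the elementary commutativity inequality $|AA| \leq \binom{|A|+1}{2}$. Write $d = (p-1)/|G|$, so that $G$ coincides with the unique subgroup $S_d$ of $\F_p^*$ of that order. The hypothesis rewrites as $A\cdot A + \lambda \subseteq \xi G \cup \{0\}$; rescaling by $\xi^{-1}$ yields $(A/\xi)\cdot A + \lambda/\xi \subseteq S_d \cup \{0\}$, which is precisely the input required by \cref{thm: stepanovea} (the binomial-coefficient hypothesis there is vacuous because $q = p$). Note also that one may assume $|A|\geq 2$: otherwise $|AA|\leq 1$ would force $|G|\leq 2$, contradicting $|G|\geq 8$.

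The key simple observation is that $-\lambda \notin A\cdot A$: if it were, then $0$ would belong to $\xi G$, which is absurd since $\xi G \subseteq \F_p^*$. Consequently $A\cap(-\lambda A^{-1})=\emptyset$, and the intersection term in \cref{thm: stepanovea} vanishes. Tracking $(A/\xi)^{-1}=\xi A^{-1}$ to identify this intersection, the theorem collapses to
\[
|A|^2 \leq |G| + |A| - 1.
\]

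To close the argument I would combine this Stepanov-type bound with two elementary bounds on the product set. From the hypothesis one gets $|AA| = |(\xi G-\lambda)\setminus\{0\}| \geq |G|-1$, and commutativity of multiplication forces $|AA| \leq \binom{|A|+1}{2} = |A|(|A|+1)/2$, since each product $a_ia_j$ is determined by the unordered multiset $\{a_i,a_j\}$. Chaining these yields $|G| \leq |A|(|A|+1)/2 + 1$, which substituted into the Stepanov inequality gives $|A|^2 \leq |A|(|A|+1)/2 + |A|$, equivalently $|A|^2 \leq 3|A|$, so $|A| \leq 3$. Therefore $|G| \leq \binom{|A|+1}{2} + 1 \leq 7$, contradicting $|G|\geq 8$. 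I do not foresee a real obstacle here; the only mild subtlety is that whether $\lambda \in \xi G$ shifts $|AA|$ by one, but the uniform lower bound $|AA|\geq |G|-1$ absorbs both sub-cases and explains why the natural numerical threshold works out to exactly $|G|\geq 8$.
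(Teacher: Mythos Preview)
Your proof is correct and follows essentially the same approach as the paper: rescale so that the product set lands in $S_d\cup\{0\}$, apply \cref{thm: stepanovea} (the paper sets $B=A/\xi$, $\lambda'=\lambda/\xi$), and combine the resulting bound $|A|^2\leq |G|+|A|-1$ with the commutativity bound $|G|-1\leq |AA|\leq \binom{|A|+1}{2}$ to force $|A|\leq 3$ and hence $|G|\leq 7$. The only cosmetic difference is that the paper first observes $|G|\geq 8\Rightarrow |A|\geq 4$ and then derives the contradiction $|A|\leq 3$, whereas you derive $|A|\leq 3$ first and then $|G|\leq 7$; your explicit remark that the intersection term in \cref{thm: stepanovea} vanishes because $-\lambda\notin AA$ is left implicit in the paper.
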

\begin{proof}
We assume, otherwise, that $AA=(\xi G-\lambda) \setminus \{0\}$ for some $A\subset \mathbb{F}_p^{*}$. Then we observe that $aa'=a'a$ for each $a,a' \in A$, it follows that
$$
 |G|-1 \leq |AA|\leq \frac{|A|^2+|A|}{2}.    
$$
 Since $|G| \geq 8$, it follows that $|A| \geq 4$. 
Let $B=A/\xi$, and $\lambda'=\lambda/\xi$. Then we have $AB=(G-\lambda')\setminus \{0\}$ and thus \cref{thm: stepanovea} implies that
$|A|^2=|A||B|\leq |G|+|A|-1$. 
Comparing the above two inequalities, we obtain that
$$
|A|^2-|A| \leq |G|-1 \leq \frac{|A|^2+|A|}{2},
$$
which implies that $|A|\leq 3$, contradicting the assumption that $|A| \geq 4$.
\end{proof}

\begin{lemma}\label{lem:addcomb}
Let $A, B, C$ be nonempty subsets of $\F_q$ and let $\lambda \in \F_q^*$. Then $|ABC+\lambda|^2 \leq |AB+\lambda||BC+\lambda||CA+\lambda|$.
\end{lemma}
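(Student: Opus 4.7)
Since translation by $\lambda$ preserves cardinality, the claim is equivalent to
\[|ABC|^{2}\le|AB|\cdot|BC|\cdot|CA|.\]
My plan is to deduce this from the multiplicative form of the Pl\"{u}nnecke-Ruzsa triple-product inequality
\[|XYZ|\cdot|Y|\le|XY|\cdot|YZ|,\]
valid for any nonempty finite subsets $X, Y, Z$ of an abelian group (a standard consequence of the Pl\"{u}nnecke graph method). I will apply this in the multiplicative group $\F_{q}^{*}$ and handle possible zeros separately.

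Assuming first that $A, B, C\subset\F_{q}^{*}$, I apply the Pl\"{u}nnecke-Ruzsa inequality cyclically, taking each of $A, B, C$ in turn as the middle set:
\[|ABC|\cdot|A|\le|CA|\cdot|AB|,\quad |ABC|\cdot|B|\le|AB|\cdot|BC|,\quad |ABC|\cdot|C|\le|BC|\cdot|CA|.\]
Multiplying these three inequalities gives $|ABC|^{3}\cdot|A||B||C|\le\bigl(|AB|\cdot|BC|\cdot|CA|\bigr)^{2}$. Combining with the trivial projection bound $|ABC|\le|A||B||C|$ produces $|ABC|^{4}\le\bigl(|AB|\cdot|BC|\cdot|CA|\bigr)^{2}$, and taking square roots yields the desired inequality.

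To handle the general case, I note that $\F_{q}$ is a multiplicative monoid but not a group, so the Pl\"{u}nnecke-Ruzsa inequality does not directly apply if $0\in A\cup B\cup C$. Setting $A'=A\setminus\{0\}$, $B'=B\setminus\{0\}$, $C'=C\setminus\{0\}$, the argument above applied to the primed sets gives $M^{2}\le pqr$ with $M=|A'B'C'|$, $p=|A'B'|$, $q=|B'C'|$, $r=|C'A'|$; each of $|AB|, |BC|, |CA|, |ABC|$ exceeds its primed counterpart by at most one. In the worst case $0\in A\cap B\cap C$, the target inequality reduces to $(M+1)^{2}\le(p+1)(q+1)(r+1)$ whenever $M^{2}\le pqr$ with $p, q, r\ge 1$, which after expansion follows from $2M\le 2\sqrt{pqr}\le pq+r\le pq+pr+qr+p+q+r$ by AM-GM. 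The other subcases (only some of $A, B, C$ meeting $\{0\}$) are handled analogously; this zero-case bookkeeping is the main technical obstacle, but each subcase reduces to a similar short AM-GM estimate.
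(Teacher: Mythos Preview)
Your reduction to $|ABC|^{2}\le|AB|\,|BC|\,|CA|$ is exactly what the paper does. The gap is in how you then try to prove this inequality. The statement you call the ``Pl\"unnecke--Ruzsa triple-product inequality,'' namely
\[
|XYZ|\cdot|Y|\;\le\;|XY|\cdot|YZ|
\]
for arbitrary finite $X,Y,Z$ in an abelian group, is \emph{not} a theorem. The Pl\"unnecke graph method (in Petridis's formulation) only yields $|Y'XZ|\cdot|Y'|\le|XY|\cdot|Y'Z|$ for some nonempty subset $Y'\subset Y$, and one cannot in general take $Y'=Y$: Gyarmati, Matolcsi and Ruzsa explicitly construct counterexamples to $|A+B_{1}+B_{2}|\le|A+B_{1}|\,|A+B_{2}|/|A|$ in their paper on Pl\"unnecke's inequality for different summands. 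Since all three of your cyclic applications rest on this false inequality, the product step $|ABC|^{3}|A||B||C|\le(|AB|\,|BC|\,|CA|)^{2}$ and hence the final bound are unjustified.

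The paper sidesteps this entirely by citing \cite[Theorem~1.2]{GMR10}, which proves
\[
|A_{1}+\cdots+A_{n}|^{\,n-1}\;\le\;\prod_{i=1}^{n}\bigl|A_{1}+\cdots+\widehat{A_{i}}+\cdots+A_{n}\bigr|
\]
directly, via a compression argument, for finite subsets of any commutative \emph{semigroup}. Taking $n=3$ gives $|ABC|^{2}\le|AB|\,|BC|\,|CA|$ immediately; and because the result holds in semigroups rather than just groups, the multiplicative monoid $(\F_{q},\cdot)$ is covered and your separate case analysis for $0\in A\cup B\cup C$ becomes unnecessary.
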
 
\begin{proof}
It suffices to show $|ABC|^2 \leq |AB||BC||CA|$, which is a special case of \cite[Theorem 5.1]{R07} due to Ruzsa.   
\end{proof}

The following two theorems generalize S\'{a}rk\"{o}zy \cite[Theorem 2]{S14} and confirm the ternary version of \cref{conjecture:MD} in a strong form.

\begin{theorem}\label{thm:ternary}
There exists an absolute constant $M>0$, such that whenever $p$ is a prime, $G$ is a proper multiplicative subgroup of $\F_p$ with $|G|>M$, and $\lambda \in \F_p^*$,  there is no ternary multiplicative decomposition $ABC=(G-\lambda)\setminus \{0\}$ with $A,B,C \subset \F_p^*$ and $|A|,|B|,|C| \geq 2$.
\end{theorem}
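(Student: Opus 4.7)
The plan is to derive \cref{thm:ternary} by combining a threefold application of Shkredov's binary bound \cref{1/2+o(1)} with the Pl\"unnecke--Ruzsa-type inequality recorded in \cref{lem:addcomb}. The key is to exploit the three symmetric ways of bracketing the ternary decomposition $ABC=(G-\lambda)\setminus\{0\}$ as a binary one.

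First I would note that for any nonempty $X,Y\subset \F_p^*$, fixing any $y_0\in Y$ gives $y_0X\subset XY$ and hence $|XY|\geq \max(|X|,|Y|)$. In particular $|AB|,|BC|,|CA|\geq 2$, so each of the binary decompositions
$$(AB)\cdot C \;=\; (BC)\cdot A \;=\; (CA)\cdot B \;=\; (G-\lambda)\setminus\{0\}$$
satisfies the hypotheses of \cref{1/2+o(1)}. Three applications of that lemma yield
$$|A|,\;|B|,\;|C|,\;|AB|,\;|BC|,\;|CA|\;=\;|G|^{1/2+o(1)} \qquad \text{as } |G|\to\infty,$$
where the $o(1)$ depends only on $|G|$ and not on the ambient prime $p$.

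Next, \cref{lem:addcomb} (which, since additive shifts preserve cardinality, amounts to $|ABC|^2\leq |AB|\,|BC|\,|CA|$) combined with the preceding estimates gives
$$|ABC|^2\;\leq\;|AB|\cdot|BC|\cdot|CA|\;=\;|G|^{3/2+o(1)}.$$
On the other hand, from $ABC=(G-\lambda)\setminus\{0\}$ we have $|ABC|\geq |G|-1$, and hence $|ABC|^2\geq (|G|-1)^2 = |G|^{2-o(1)}$. The forced inequality $|G|^{2-o(1)}\leq |G|^{3/2+o(1)}$ fails as soon as $|G|$ exceeds some absolute constant $M$, producing the desired contradiction.

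The only delicate point is uniformity: one needs $M$ to be genuinely absolute, independent of $p$, $G$, and $\lambda$. This is automatic, since \cref{1/2+o(1)} is stated as an asymptotic in $|G|$ with no dependence on the ambient field, so the $o(1)$ contributions from the three pairings merge into a single error that tends to $0$ uniformly in $p$ as $|G|\to\infty$. All other ingredients are purely combinatorial, so no further obstacle arises; I expect the argument to be short, with the main conceptual step being the realization that Shkredov's binary $|G|^{1/2+o(1)}$-bound can be leveraged three times at once against the $|ABC|^2\leq |AB|\,|BC|\,|CA|$ inequality.
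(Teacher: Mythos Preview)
Your argument is correct and in fact slightly more streamlined than the paper's. Both proofs bracket the ternary decomposition three ways and feed each bracketing into \cref{1/2+o(1)}, then close with the Pl\"unnecke--Ruzsa-type bound \cref{lem:addcomb} together with $|ABC|\geq |G|-1$. The difference is in how the upper bound on $|AB|\,|BC|\,|CA|$ is obtained. The paper invokes \cref{thm: stepanovea} to get $|AB||C|,\,|BC||A|,\,|CA||B|\ll |G|$, multiplies these, and combines with the lower bound $|A||B||C|\geq |G|^{3/2+o(1)}$ coming from the ``$\min$'' half of \cref{1/2+o(1)}. You instead read off $|AB|,|BC|,|CA|=|G|^{1/2+o(1)}$ directly from the ``$\max$'' half of \cref{1/2+o(1)} applied to the same three bracketings, which makes the Stepanov input \cref{thm: stepanovea} unnecessary here. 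Your route is thus more elementary and slightly shorter; the paper's route, on the other hand, is the one that generalizes to the non-prime setting of \cref{thm: ternary multiplicative decomposition2}, where \cref{1/2+o(1)} is unavailable (cf.\ \cref{remark:1/2+o(1)}) and one must fall back on \cref{thm:minlb} and \cref{thm: stepanovea}.
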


\begin{proof}
Assume that there are sets $A,B,C \subset \F_p^*$ with $|A|,|B|,|C| \geq 2$, such that $ABC=(G-\lambda)\setminus \{0\}$ for some proper multiplicative subgroup $G$ of $\F_p$ and some $\lambda \in \F_p^*$.

Then we can write $(G-\lambda)\setminus \{0\}$ in three different ways: $A(BC), B(CA), C(AB)$, so that we can apply the results in previous sections to each of them. Note that \cref{1/2+o(1)} implies that $$|A|, |B|, |C| \geq |G|^{1/2+o(1)}.$$
On the other hand, \cref{thm: stepanovea} implies that
$$
|AB||C|,|BC||A|,|CA||B|\ll |G|.
$$
Therefore, from \cref{lem:addcomb} and the fact $|ABC| \in \{|G|, |G|-1\}$, we have
$$
|G|^2 |A||B||C|\ll |ABC|^2 |A||B||C| \ll (|AB||C|)(|BC||A|)(|CA||B|) \ll |G|^3.
$$
It follows that 
$$
|G|^{3/2+o(1)} \ll |A||B||C| \ll |G|,
$$
that is, $|G|\ll 1$, where the implicit constant is absolute. This completes the proof of the theorem.
\end{proof}

\begin{theorem}\label{thm: ternary multiplicative decomposition2}
Let $\epsilon>0$. There is a constant $Q=Q(\epsilon)$, such that for each prime power $q>Q$ and a divisor $d$ of $q-1$ with $2 \leq d \leq q^{1/10-\epsilon}$, there is no ternary multiplicative decomposition $ABC=(S_d(\F_q)-\lambda)\setminus \{0\}$ with $A,B,C \subset \F_q^*$, $|A|,|B|,|C| \geq 2$, and $\lambda \in \F_q^*$.     
\end{theorem}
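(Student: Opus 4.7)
\noindent\emph{Proof proposal.} My plan is to argue by contradiction: assume a decomposition $ABC = (S_d(\F_q) - \lambda) \setminus \{0\}$ exists with $|A|, |B|, |C| \geq 2$, and derive an impossible size inequality by combining \cref{thm:minlb} with the Pl\"{u}nnecke--Gyarmati--Matolcsi--Ruzsa style inequality \cref{lem:addcomb}. The key observation is that a ternary decomposition yields three binary decompositions of the same shifted subgroup, namely
\[
A \cdot (BC) \;=\; B \cdot (CA) \;=\; C \cdot (AB) \;=\; (S_d - \lambda) \setminus \{0\},
\]
each of which can be fed into \cref{thm:minlb}.

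First I would fix some convenient parameter, say $\epsilon' = 1/4$; since $d \leq q^{1/10 - \epsilon} \leq q^{1/2 - \epsilon'}$ once $q \geq Q(\epsilon)$ is large enough, and since $|BC| \geq \max\{|B|, |C|\} \geq 2$ (and similarly for the other two pairings), \cref{thm:minlb} applies to each of the three binary decompositions. This simultaneously yields
\[
|A|,\ |B|,\ |C|,\ |AB|,\ |BC|,\ |CA| \;\leq\; C_1 \sqrt{q},
\]
for a constant $C_1$ depending only on the fixed $\epsilon'$.

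Next I would apply \cref{lem:addcomb} to obtain
\[
|ABC|^2 \;\leq\; |AB| \cdot |BC| \cdot |CA| \;\leq\; C_1^3\, q^{3/2}.
\]
On the other hand, $|ABC| = |(S_d - \lambda) \setminus \{0\}| \geq |S_d| - 1 = (q-1)/d - 1 \geq q/(2d)$ provided $q$ is large relative to $d$, which it is since $d \leq q^{1/10 - \epsilon}$. Combining the two inequalities forces
\[
\frac{q^2}{4 d^2} \;\leq\; C_1^3\, q^{3/2},
\]
i.e.\ $d \gg q^{1/4}$. But the hypothesis $d \leq q^{1/10-\epsilon}$ yields $d = o(q^{1/4})$, a contradiction for $q > Q(\epsilon)$ sufficiently large.

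The one point I would be careful about is that the implicit constants in \cref{thm:minlb} must be uniform in $d$ and $q$. This is fine because the constant coming from the proof of \cref{thm:minlb} depends only on the integer parameter $\nu = \lceil 2/\epsilon' \rceil$, and we fix $\epsilon' = 1/4$ once and for all, independently of $q$ and $d$. Beyond this uniformity check, the argument is essentially a short combinatorial calculation, and no deeper obstacle arises; note in particular that \cref{thm: stepanovea} itself is never invoked, so the binomial-coefficient side condition that distinguishes prime powers from primes plays no role.
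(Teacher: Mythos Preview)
Your argument is correct, and in fact it is cleaner than the paper's own proof. The paper follows the template of the prime-field case (\cref{thm:ternary}) more closely: it records both the lower bounds $|A|,|B|,|C|\gg\sqrt{q}/d$ and the upper bounds $|A|\cdot|BC|,\ |B|\cdot|CA|,\ |C|\cdot|AB|\ll q$ coming from \cref{thm:minlb}, then multiplies $|ABC|^2$ by $|A||B||C|$ before comparing. That route yields
\[
\frac{q^{2}}{d^{2}}\cdot\frac{q^{3/2}}{d^{3}}\ \ll\ |ABC|^{2}\,|A||B||C|\ \leq\ (|AB||C|)(|BC||A|)(|CA||B|)\ \ll\ q^{3},
\]
and hence only $d\gg q^{1/10}$, exactly matching the hypothesis of the theorem. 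You, by contrast, apply \cref{lem:addcomb} directly as $|ABC|^{2}\le|AB||BC||CA|\ll q^{3/2}$, use only the upper-bound half of \cref{thm:minlb}, and never invoke the lower bounds on $|A|,|B|,|C|$. This gives the stronger conclusion $d\gg q^{1/4}$, so your proof actually shows that the exponent $1/10-\epsilon$ in the statement could be relaxed to $1/4-\epsilon$. Your remark on uniformity of the implicit constant (fixing $\epsilon'=1/4$ once and for all) is exactly the point that needs checking, and you have it right.
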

\begin{proof}
The proof is similar to the proof of \cref{thm:ternary}. While \cref{1/2+o(1)} does not hold in the new setting (see \cref{remark:1/2+o(1)}), we can instead use \cref{thm:minlb}. If $ABC=(S_d(\F_q)-\lambda)\setminus \{0\}$, then \cref{thm:minlb} implies that 
$$
|A|, |B|, |C| \gg \frac{\sqrt{q}}{d}, \quad  |A||BC|,|BC||A|, |CA||B| \ll q.
$$
A similar computation leads to $d \gg q^{1/10}$, which implies that $q \ll_{\epsilon} 1$ since we assume that $d \leq q^{1/10-\epsilon}$.
\end{proof}

\section*{Acknowledgments}
The authors thank Andrej Dujella, Greg Martin, and J\'ozsef Solymosi for helpful discussions. The research of the second author was supported in part by an NSERC fellowship. The third author was supported by the KIAS Individual Grant (CG082701) at the Korea Institute for Advanced Study and the Institute for Basic Science (IBS-R029-C1). 

\bibliographystyle{abbrv}
\bibliography{references}

\appendix
\section{Algorithm and Computations}
\label{appendix}

We continue our discussion from the introduction on the following constant
$$\gamma_k=\limsup_{n \to \infty} \frac{M_k(n)}{\log n}.$$
It is implicit in \cite{DKM22} that $\gamma_k\leq 3 \phi(k)$. We also write $\nu_{k} = \frac{2k}{k-2} \eta_k \phi(k).$
%\textcolor{black}{
%\[
%\]
%}

Our main result, \cref{mainthm1}, implies that $\gamma_{k} \le \nu_{k}$.
In particular, in view of \cref{remark:6}, it follows that $\gamma_k \leq 6$ for all $k \geq 2$ \textcolor{black}{and $\gamma_k\leq 2+o(1)$ when $k \to \infty$}. In \cref{fig:graph1}, we pictorially compare our new bound $\nu_{k}$ with the bound $3 \phi(k)$ when $2 \le k \le 1000$.
\begin{figure}[H]
    \centering
    \includegraphics[scale=0.6]{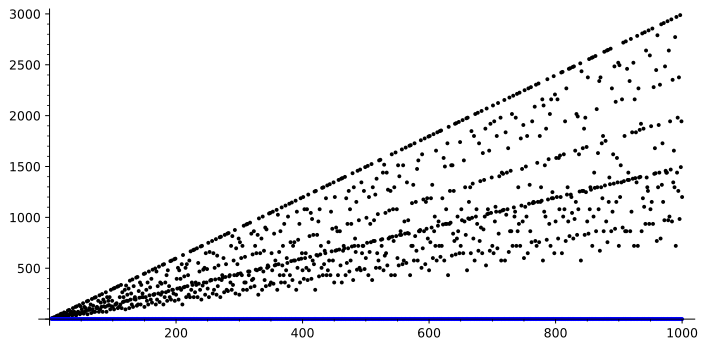}
    \caption{Comparison between the new bound $\nu_k$ and the bound $3\phi(k)$ in \cite{DKM22} when $2 \le k \le 1000$. The black dots denote $3\phi(k)$, and the blue dots denote $\nu_k$.}
    \label{fig:graph1}
\end{figure}
Recall that for $k \ge 2$, we defined the constant $\eta_k=\underset{\mathcal{I}}{\min} |\mathcal{I}|/T_\mathcal{I}^2,$
where the minimum is taken over all nonempty subsets $\mathcal{I}$ of 
$\{1 \leq i \leq k: \gcd(i,k)=1, \gcd(i-1,k)>1\},$ and $T_{\mathcal{I}}=\sum_{i \in \mathcal{I}} \sqrt{\gcd(i-1,k)}.$ 

To compute $\eta_k$, we use the following simple greedy algorithm with running time $O(k \log k)$. The observation is as follows. If $|\mathcal{I}|$ is fixed, our goal is to minimize $|\mathcal{I}|/T_{\mathcal{I}}^2$. Thus, we should choose those residue classes $i \pmod k$ with $\gcd(i-1,k)$ as large as possible to maximize $T_{\mathcal{I}}$. Then, we can sort these gcds in decreasing order, and when $|\mathcal{I}|$ is fixed, we pick those residue classes corresponding to the largest $|\mathcal{I}|$ gcds. The following is a precise description of the algorithm:
\begin{algorithm}
Let $k \ge 2$. We follow the notations defined in \cref{subsec: size of Mk}.
\begin{enumerate}
\item[\textit{Step 1.}] Let $\mathcal{A}=\{1 \leq i \leq k: \gcd(i,k)=1, \gcd(i-1,k)>1\}$. We list the elements of $\mathcal{A}$ by $\{a_{1},a_{2},\ldots\}$ such that $\gcd(a_j -1,k)$ is decreasing by using a sorting algorithm. 
\item[\textit{Step 2.}] Set $I_{r}=\{a_{1},\ldots,a_{r}\}$, $T_{I_{r}}=\sum_{i \in I_{r}} \sqrt{\gcd(i-1,k)}$, and $\xi_{I_r}=|I_r|/T_{I_r}^2$.
\item[\textit{Step 3.}] Return $\eta_{k}=\min_{r}\xi_{I_r}$ and terminate the algorithm.
\end{enumerate}
\end{algorithm}
Note that the running time of the above algorithm is $O(k \log k)$: sorting takes $O(k \log k)$ time, while other steps take linear time. 

Next, we also consider the minimum value $m_{k}$ of the upper bounds $\{ \nu_{i} \colon 2 \le i \le k\}$ for each $k \ge 2$.
\cref{table: mini} shows the values of $m_{k}$ for $2 \le k \leq 1{,}200{,}000$ when they are changed. 

\begin{table}[H]
\begin{tabular}{c||c}
$k$ & $m_{k}$ \\ \hline
 2 &  2.00000 \\ \hline 
  4 &  1.37258 \\ \hline
  6 &  0.80385 \\ \hline
  8 &  0.72776 \\ \hline
  12 &  0.44134 \\ \hline
  24 &  0.31910 \\ \hline
  36 &  0.29027 \\ \hline
  48 &  0.25836 \\ \hline
  60 &  0.21636 \\ \hline
  120 &  0.16570 \\ 
\end{tabular}
\quad 
\begin{tabular}{c||c}
$k$ & $m_{k}$ \\ \hline
  180 &  0.15191 \\ \hline
  240 &  0.13876 \\ \hline
  360 &  0.11708 \\ \hline
  720 &  0.09693 \\ \hline
  840 &  0.09266 \\ \hline
  1260 &  0.08465 \\ \hline
  1440 &  0.08445 \\ \hline
  1680 &  0.07624 \\ \hline
  2520 &  0.06465 \\ \hline
  5040 &  0.05317 \\ 
\end{tabular}
\quad 
\begin{tabular}{c||c}
$k$ & $m_{k}$ \\ \hline  
  7560 &  0.05171 \\ \hline
  10080 &  0.04592 \\ \hline
  15120 &  0.04252 \\ \hline
  20160 &  0.04111 \\ \hline
  25200 &  0.03887 \\ \hline
  27720 &  0.03665 \\ \hline
  30240 &  0.03647 \\ \hline
  50400 &  0.03343 \\ \hline
  55440 &  0.02997 \\ \hline
  83160 &   0.02877 \\  
\end{tabular}
\quad 
\begin{tabular}{c||c}
$k$ & $m_{k}$ \\ \hline
%%%%%%%%%%
  110880 &   0.02574 \\  \hline
  166320 &  0.02343 \\ \hline
  221760 &  0.02280 \\ \hline
  277200 &  0.02138 \\ \hline
   332640 &  0.02008 \\ \hline
   498960 &  0.01985 \\ \hline
   554400 &  0.01827 \\ \hline
    665280 &  0.01774 \\ \hline
     720720 &  0.01654 \\ \hline 
      1081080 & 0.01587 \\ 
\end{tabular}

\bigskip
\caption{The minimum $m_{k}$ of the upper bounds $\{\nu_{i} \colon 1 \le i \le k\}$ for $2 \le k \leq 1{,}200{,}000$.}\label{table: mini}
\end{table}

We also report our computations on $\nu_k$ for $2 \leq k \leq 201$ in the following table.

\begin{table}[]
\begin{tabular}{c||c}
$k$ & $\nu_k$ \\ \hline
2 &  2.0000 \\ \hline
3 & 4.0000 \\ \hline
 4 & 1.3726 \\ \hline
 5 & 2.6667 \\ \hline
 6 & 0.8038 \\ \hline
 7 & 2.4000 \\ \hline
 8 & 0.7278 \\ \hline
 9 & 1.1077 \\ \hline
 10 & 0.7295 \\ \hline
 11 & 2.2222 \\ \hline
 12 & 0.4413 \\ \hline
 13 & 2.1818 \\ \hline
 14 & 0.7185 \\ \hline
 15 & 0.7222 \\ \hline
 16 & 0.5383 \\ \hline
 17 & 2.1333 \\ \hline
 18 & 0.4522 \\ \hline
 19 & 2.1176 \\ \hline
 20 & 0.4450 \\ \hline
 21 & 0.7355 \\ \hline
 22 & 0.7251 \\ \hline
 23 & 2.0952 \\ \hline
 24 & 0.3191 \\ \hline
 25 & 1.1180 \\ \hline
 26 & 0.7313 \\ \hline
 27 & 0.7508 \\ \hline
 28 & 0.4552 \\ \hline
 29 & 2.0741 \\ \hline
 30 & 0.3351 \\ \hline
 31 & 2.0690 \\ \hline
 32 & 0.4555 \\ \hline
 33 & 0.7709 \\ \hline
 34 & 0.7438 \\ \hline
 35 & 0.7311 \\ \hline
 36 & 0.2903 \\ \hline
 37 & 2.0571 \\ \hline
 38 & 0.7497 \\ \hline
 39 & 0.7873 \\ \hline
 40 & 0.3353 \\ \hline
 41 & 2.0513 \\ 
 \end{tabular}
\quad
\begin{tabular}{c||c}
$k$ & $\nu_k$ \\ \hline
42 & 0.3465 \\ \hline
 43 & 2.0488 \\ \hline
 44 & 0.4739 \\ \hline
 45 & 0.4581 \\ \hline
 46 & 0.7604 \\ \hline
 47 & 2.0444 \\ \hline
 48 & 0.2584 \\ \hline
 49 & 1.1716 \\ \hline
 50 & 0.4974 \\ \hline
 51 & 0.8163 \\ \hline
 52 & 0.4818 \\ \hline
 53 & 2.0392 \\ \hline
 54 & 0.3596 \\ \hline
 55 & 0.7678 \\ \hline
 56 & 0.3486 \\ \hline
 57 & 0.8290 \\ \hline
 58 & 0.7742 \\ \hline
 59 & 2.0351 \\ \hline
 60 & 0.2164 \\ \hline
 61 & 2.0339 \\ \hline
 62 & 0.7783 \\ \hline
 63 & 0.4746 \\ \hline
 64 & 0.4124 \\ \hline
 65 & 0.7860 \\ \hline
 66 & 0.3643 \\ \hline
 67 & 2.0308 \\ \hline
 68 & 0.4950 \\ \hline
 69 & 0.8515 \\ \hline
 70 & 0.3548 \\ \hline
 71 & 2.0290 \\ \hline
 72 & 0.2171 \\ \hline
 73 & 2.0282 \\ \hline
 74 & 0.7892 \\ \hline
 75 & 0.5005 \\ \hline
 76 & 0.5006 \\ \hline
 77 & 0.7644 \\ \hline
 78 & 0.3713 \\ \hline
 79 & 2.0260 \\ \hline
 80 & 0.2730 \\ \hline
 81 & 0.6359 \\ 
 \end{tabular}
\quad
\begin{tabular}{c||c}
$k$ & $\nu_k$ \\ \hline
82 & 0.7956 \\ \hline
 83 & 2.0247 \\ \hline
 84 & 0.2263 \\ \hline
 85 & 0.8195 \\ \hline
 86 & 0.7985 \\ \hline
 87 & 0.8796 \\ \hline
 88 & 0.3682 \\ \hline
 89 & 2.0230 \\ \hline
 90 & 0.2239 \\ \hline
 91 & 0.7794 \\ \hline
 92 & 0.5103 \\ \hline
 93 & 0.8877 \\ \hline
 94 & 0.8040 \\ \hline
 95 & 0.8346 \\ \hline
 96 & 0.2261 \\ \hline
 97 & 2.0211 \\ \hline
 98 & 0.5376 \\ \hline
 99 & 0.5038 \\ \hline
 100 & 0.3344 \\ \hline
 101 & 2.0202 \\ \hline
 102 & 0.3827 \\ \hline
 103 & 2.0198 \\ \hline
 104 & 0.3757 \\ \hline
 105 & 0.3626 \\ \hline
 106 & 0.8114 \\ \hline
 107 & 2.0190 \\ \hline
 108 & 0.2355 \\ \hline
 109 & 2.0187 \\ \hline
 110 & 0.3768 \\ \hline
 111 & 0.9094 \\ \hline
 112 & 0.2864 \\ \hline
 113 & 2.0180 \\ \hline
 114 & 0.3874 \\ \hline
 115 & 0.8621 \\ \hline
 116 & 0.5220 \\ \hline
 117 & 0.5160 \\ \hline
 118 & 0.8179 \\ \hline
 119 & 0.8087 \\ \hline
 120 & 0.1657 \\ \hline
 121 & 1.2615 \\ 
 \end{tabular}
\quad
\begin{tabular}{c||c}
$k$ & $\nu_k$ \\ \hline
122 & 0.8200 \\ \hline
 123 & 0.9220 \\ \hline
 124 & 0.5254 \\ \hline
 125 & 0.8820 \\ \hline
 126 & 0.2335 \\ \hline
 127 & 2.0160 \\ \hline
 128 & 0.3877 \\ \hline
 129 & 0.9278 \\ \hline
 130 & 0.3869 \\ \hline
 131 & 2.0155 \\ \hline
 132 & 0.2413 \\ \hline
 133 & 0.8226 \\ \hline
 134 & 0.8256 \\ \hline
 135 & 0.3709 \\ \hline
 136 & 0.3878 \\ \hline
 137 & 2.0148 \\ \hline
 138 & 0.3955 \\ \hline
 139 & 2.0146 \\ \hline
 140 & 0.2400 \\ \hline
 141 & 0.9387 \\ \hline
 142 & 0.8291 \\ \hline
 143 & 0.7812 \\ \hline
 144 & 0.1809 \\ \hline
 145 & 0.8976 \\ \hline
 146 & 0.8307 \\ \hline
 147 & 0.5506 \\ \hline
 148 & 0.5342 \\ \hline
 149 & 2.0136 \\ \hline
 150 & 0.2468 \\ \hline
 151 & 2.0134 \\ \hline
 152 & 0.3928 \\ \hline
 153 & 0.5366 \\ \hline
 154 & 0.3772 \\ \hline
 155 & 0.9081 \\ \hline
 156 & 0.2471 \\ \hline
 157 & 2.0129 \\ \hline
 158 & 0.8353 \\ \hline
 159 & 0.9532 \\ \hline
 160 & 0.2385 \\ \hline
 161 & 0.8485 \\ 
 \end{tabular}
\quad
\begin{tabular}{c||c}
$k$ & $\nu_k$ \\ \hline
162 & 0.3140 \\ \hline
 163 & 2.0124 \\ \hline
 164 & 0.5392 \\ \hline
 165 & 0.3857 \\ \hline
 166 & 0.8382 \\ \hline
 167 & 2.0121 \\ \hline
 168 & 0.1737 \\ \hline
 169 & 1.2965 \\ \hline
 170 & 0.4049 \\ \hline
 171 & 0.5453 \\ \hline
 172 & 0.5416 \\ \hline
 173 & 2.0117 \\ \hline
 174 & 0.4053 \\ \hline
 175 & 0.5104 \\ \hline
 176 & 0.3077 \\ \hline
 177 & 0.9660 \\ \hline
 178 & 0.8422 \\ \hline
 179 & 2.0113 \\ \hline
 180 & 0.1519 \\ \hline
 181 & 2.0112 \\ \hline
 182 & 0.3854 \\ \hline
 183 & 0.9700 \\ \hline
 184 & 0.4014 \\ \hline
 185 & 0.9365 \\ \hline
 186 & 0.4080 \\ \hline
 187 & 0.8001 \\ \hline
 188 & 0.5459 \\ \hline
 189 & 0.3843 \\ \hline
 190 & 0.4129 \\ \hline
 191 & 2.0106 \\ \hline
 192 & 0.2075 \\ \hline
 193 & 2.0105 \\ \hline
 194 & 0.8471 \\ \hline
 195 & 0.3948 \\ \hline
 196 & 0.3652 \\ \hline
 197 & 2.0103 \\ \hline
 198 & 0.2493 \\ \hline
 199 & 2.0102 \\ \hline
 200 & 0.2517 \\ \hline
 201 & 0.9811 \\ 
 \end{tabular}

\bigskip
\caption{The upper bound $\nu_k$ of $\gamma_k$ when $2 \le k \le 201$} 
\label{table: the upper bound1}
\end{table}

\end{document}